\newtheorem{thm}{Theorem}[section]
\newtheorem{cor}[thm]{Corollary}
\newtheorem{prop}[thm]{Proposition}
\theoremstyle{definition}
\newtheorem{dfn}[thm]{Definition}
\newtheorem{ex}[thm]{Example}
\newtheorem{lem}[thm]{Lemma}
\newtheorem{condition}{Condition}
\theoremstyle{remark}
\newtheorem{rem}[thm]{Remark}
\newcommand{\Ob}{\mathrm{Ob}}         
\newcommand{\id}{\mathrm{id}}         
\newcommand{\Id}{\mathrm{Id}}         
\newcommand{\ppr}{^{\prime}}          
\newcommand{\pprr}{^{\prime\prime}}   
\newcommand{\pro}{\mathrm{pr}}        
\newcommand{\op}{\mathrm{op}}         
\newcommand{\fa}{\forall}             
\newcommand{\am}{\amalg}              
\newcommand{\co}{\colon}              
\newcommand{\ci}{\circ}               
\newcommand{\iv}{^{-1}}               
\newcommand{\se}{\subseteq}           
\newcommand{\ti}{\times}              
\newcommand{\uas}{^{\ast}}            
\newcommand{\sas}{_{\ast}}            
\newcommand{\ems}{\emptyset}          
\newcommand{\lt}{\dashv}              
\newcommand{\Sett}{\mathit{Set}}    
\newcommand{\sett}{\mathit{set}}    
\newcommand{\lla}{\longleftarrow}     
\newcommand{\lra}{\longrightarrow}    
\newcommand{\tc}{\Rightarrow}         
\newcommand{\LR}{\Leftrightarrow}     
\newcommand{\al}{\alpha}         
\newcommand{\be}{\beta}          
\newcommand{\lam}{\lambda}       
\newcommand{\ups}{\upsilon}      
\newcommand{\sig}{\sigma}        
\newcommand{\ep}{\varepsilon}    
\newcommand{\thh}{\theta}        
\newcommand{\om}{\omega}         
\newcommand{\vp}{\varphi}        
\newcommand{\Om}{\Omega}         
\newcommand{\Bbbb}{\mathbb{B}}  
\newcommand{\Cbb}{\mathbb{C}}   
\newcommand{\Dbb}{\mathbb{D}}   
\newcommand{\Sbb}{\mathbb{S}}   
\newcommand{\Fcal}{\mathcal{F}} 
\newcommand{\afr}{\mathfrak{a}} 
\newcommand{\bfr}{\mathfrak{b}} 
\newcommand{\Afr}{\mathfrak{A}} 
\newcommand{\wt}{\widetilde}    
\newcommand{\ovl}{\overline}    
\newcommand{\ov}{\overset}      
\newcommand{\un}{\underset}     
\newcommand{\Ind}{\mathrm{Ind}}    
\newcommand{\Inf}{\mathrm{Inf}}    
\newcommand{\Res}{\mathrm{Res}}    
\newcommand{\Jnd}{\mathrm{Jnd}}    
\newcommand{\Inv}{\mathrm{Inv}}    
\newcommand{\Orb}{\mathrm{Orb}}    
\newcommand{\nm}{\vartriangleleft} 
\newcommand{\Gs}{{}_G\mathit{set}}           
\newcommand{\Hs}{{}_H\mathit{set}}           
\newcommand{\iog}{\iota^{(G)}}
\newcommand{\ioh}{\iota^{(H)}}
\newcommand{\prg}{\pro^{(G)}}
\newcommand{\prh}{\pro^{(H)}}
\newcommand{\ax}{\al(x)}        
\newcommand{\oc}{\althh\co\xg\to\yh}        
\newcommand{\ata}{(\althh)\uas}        
\newcommand{\atp}{(\althh)\pl}        
\newcommand{\atb}{(\althh)\bu}        
\newcommand{\Aa}{(A,\afr)}        
\newcommand{\Aap}{(A\ppr,\afr\ppr)}        
\newcommand{\Bb}{(B,\bfr)}        
\newcommand{\Bbp}{(B\ppr,\bfr\ppr)}        
\newcommand{\Aar}{(A\ov{\afr}{\to}X)}        
\newcommand{\Bbr}{(B\ov{\bfr}{\to}Y)}        
\newcommand{\XY}{X\un{Y}{\ti}}
\newcommand{\HG}{H\un{G}{\ti}}
\newcommand{\di}{^{\diamond}}
\newcommand{\eq}{\equiv}
\newcommand{\bu}{_{\bullet}}
\newcommand{\pl}{_+}
\newcommand{\althhd}{\frac{\al\di}{\thh\di}}
\newcommand{\ual}{^{\althh}}
\newcommand{\uth}{^{\thh}}
\newcommand{\uu}{(u_1/u_2)}
\newcommand{\pt}{\mathbf{1}}                     
\newcommand{\SIm}{\mathrm{SIm}}                  
\newcommand{\Mack}{\mathit{Mack}}             
\newcommand{\MackS}{\Mack(\Sbb)}              
\newcommand{\xg}{\frac{X}{G}}
\newcommand{\yh}{\frac{Y}{H}}
\newcommand{\yg}{\frac{Y}{G}}
\newcommand{\zk}{\frac{Z}{K}}
\newcommand{\wl}{\frac{W}{L}}
\newcommand{\pte}{\frac{\pt}{e}}
\newcommand{\ptg}{\frac{\pt}{G}}
\newcommand{\pth}{\frac{\pt}{H}}
\newcommand{\ptq}{\frac{\pt}{q}}
\newcommand{\althh}{\frac{\alpha}{\theta}}
\newcommand{\ulthh}{\frac{\ups_{\alpha}}{\theta}}
\newcommand{\althhp}{\frac{\alpha\ppr}{\theta\ppr}}
\newcommand{\bet}{\frac{\beta}{\tau}}
\newcommand{\betp}{\frac{\beta\ppr}{\tau\ppr}}
\numberwithin{equation}{section}
\begin{document}

\title[Partial Tambara structure on the Burnside biset functor]{Partial Tambara structure on the Burnside biset functor, induced from a derivator-like system of adjoint triplets}

\author{Hiroyuki NAKAOKA}
\address{Research and Education Assembly, Science and Engineering Area, Research Field in Science, Kagoshima University, 1-21-35 Korimoto, Kagoshima, 890-0065 Japan\ /\ LAMFA, Universit\'{e} de Picardie-Jules Verne, 33 rue St Leu, 80039 Amiens Cedex1, France}

\email{nakaoka@sci.kagoshima-u.ac.jp}
\urladdr{http://www.lamfa.u-picardie.fr/nakaoka/}


\thanks{This work is supported by JSPS KAKENHI Grant Numbers 25800022,\, 24540085.}
\thanks{The author wishes to thank the referee for his instructive advices.}

\begin{abstract}
In the previous article \lq\lq{\it A Mackey-functor theoretic interpretation of biset functors}", we have constructed the 2-category $\Sbb$ {\it of finite sets with variable finite group actions}, in which bicoproducts and bipullbacks exist. 
As shown in it, biset functors can be regarded as a special class of Mackey functors on $\Sbb$.

In this article, we equip $\Sbb$ with a system of adjoint triplets, which satisfies properties analogous to a derivator.
This system encodes the six operations for finite groups. As a corollary, we show that the associated Burnside rings satify analogous properties to a Tambara functor, in the context of biset functor theory.
\end{abstract}

\maketitle


\section{Introduction and Preliminaries}
In the previous article \cite{N_BisetMackey}, we have constructed the 2-category $\Sbb$ {\it of finite sets with variable finite group actions}. This 2-category admits finite bicoproducts and bipullbacks, which enable us to define the category $\MackS$ of Mackey functors on $\Sbb$. 
As shown in \cite{N_BisetMackey}, biset functors defined by Bouc (\cite{Bouc_Biset}) can be regarded as a special class of these Mackey functors.

In this article, we equip $\Sbb$ with a system of adjoint triplets, which satisfies properties analogous to the defining conditions for derivators.
This system encodes the six operations for finite sets with finite group actions, namely, induction, restriction, inflation, multiplicative induction, taking orbits and invariant parts (Example \ref{Ex5.1}).
\[
\xy
(-26,0)*+{\Hs}="0";
(0,0)*+{\Gs}="2";
(26,0)*+{{}_{G/N}\sett}="4";
{\ar@{<-}@/_1.6pc/_{\Ind} "2";"0"};
{\ar@{<-}|*+{_{\Res}} "0";"2"};
{\ar@{<-}@/^1.6pc/^{\Jnd} "2";"0"};
{\ar@{<-}@/_1.6pc/_{\Orb} "4";"2"};
{\ar@{<-}|*+{_{\Inf}} "2";"4"};
{\ar@{<-}@/^1.6pc/^{\Inv} "4";"2"};
{\ar@{}|{\perp} (-13,5);(-13,3)};
{\ar@{}|{\perp} (-13,-3);(-13,-5)};
{\ar@{}|{\perp} (13,5);(13,3)};
{\ar@{}|{\perp} (13,-3);(13,-5)};
\endxy
\]
As a corollary, we show that the associated Burnside rings satisfy analogous properties to Tambara functors, in the context of biset functor theory.

\smallskip

Throughout this article, for a finite group $G$, let $e\in G$ denote its unit element. Abbreviately, we denote the trivial group by the same symbol $e$.

\medskip

The strict 2-category $\Sbb$ {\it of finite sets with variable finite group actions}, is defined as follows. (For generalities of 2-categories, see \cite{Borceux},\cite{MacLane}.)
\begin{dfn}\label{DefS}$($\cite[Definition 2.2.12]{N_BisetMackey}$)$
2-category $\Sbb$ is defined by the following.
\begin{enumerate}
\item[{\rm (0)}] A 0-cell is a pair of a finite group $G$ and a finite $G$-set $X$. We denote this pair by $\xg$.
\item[{\rm (1)}] For any pair of 0-cells $\xg$ and $\yh$, a morphism $\althh\co \xg\to\yh$ is a pair of a map $\al\co X\to Y$ and a family of maps $\{\thh_x\co G\to H \}_{x\in X}$ satisfying
\begin{itemize}
\item[{\rm (i)}] $\al(gx)=\thh_x(g)\al(x)$ 
\item[{\rm (ii)}] $\thh_x(gg\ppr)=\thh_{g\ppr x}(g)\thh_x(g\ppr)$
\end{itemize}
for any $x\in X$ and any $g,g\ppr\in G$. $\al$ is called the {\it base map} of 1-cell $\althh$. $\thh$ is called the {\it acting part}.

For any sequence of 1-cells
\[ \xg\ov{\althh}{\lra}\yh\ov{\bet}{\lra}\zk, \]
their composition $(\bet)\ci(\althh)=\frac{\be\ci\al}{\tau\ci\thh}$ is defined by
\begin{itemize}
\item[-] $\be\ci\al\co X\to Z$ is the usual composition of maps of sets,
\item[-] $\tau\ci\thh$ is defined by
\[ (\tau\ci\thh)_x=\tau_{\ax}\ci\thh_x \]
for any $x\in X$.
\end{itemize}
If there is a group homomorphism $f$ such that $\thh_x=f\ (\fa x\in X)$, then we denote $\althh$ by $\frac{\al}{f}$. In particular if $f=\id_G$, then we denote $\frac{\al}{\id_G}$ simply by $\frac{\al}{G}$. For any 0-cell $\xg$, the identity 1-cell is given by $\id_{\xg}=\frac{\id_X}{G}\co\xg\to\xg$.
\item[{\rm (2)}] For any pair of 1-cells $\althh,\althhp\co\xg\to\yh$, a 2-cell $\ep\co\althh\tc\althhp$ is a family of elements $\{ \ep_x\in H\}_{x\in X}$ satisfying
\begin{itemize}
\item[{\rm (i)}] $\al\ppr(x)=\ep_x\al(x) $,
\item[{\rm (ii)}] $\ep_{gx}\thh_x(g)\ep_x\iv=\thh\ppr_x(g)$
\end{itemize}
for any $x\in X$ and $g\in G$.
\end{enumerate}
If we are given a sequence of 2-cells
\[
\xy
(-14,0)*+{\xg}="0";
(14,0)*+{\yh}="2";
{\ar@/^2.0pc/^{\althh} "0";"2"};
{\ar|*+{_{\althhp}} "0";"2"};
{\ar@/_2.0pc/_{\frac{\al\pprr}{\thh\pprr}} "0";"2"};
{\ar@{=>}^{\ep} (0,6);(0,3)};
{\ar@{=>}^{\ep\ppr} (0,-3);(0,-6)};
\endxy
\]
then their vertical composition $\ep\ppr\cdot\ep\co \althh\tc \frac{\al\pprr}{\thh\pprr}$ is defined by
\[ (\ep\ppr\cdot\ep)_x=\ep\ppr_x\ep_x\quad(\fa x\in X). \]
Any 2-cell is invertible with respect to this vertical composition.
\end{dfn}

\begin{dfn}\label{DefHoriz}
Let $\xg\ov{\althh}{\lra}\yh\ov{\bet}{\lra}\zk$ be a sequence of 1-cells.
\begin{enumerate}
\item For a 2-cell
\[
\xy
(-14,0)*+{\xg}="0";
(14,0)*+{\yh}="2";
{\ar@/^1.2pc/^{\althh} "0";"2"};
{\ar@/_1.2pc/_{\althhp} "0";"2"};
{\ar@{=>}^{\ep} (0,2);(0,-2)};
\endxy,
\]
define $(\bet)\ci\ep\co(\bet)\ci(\althh)\tc(\bet)\ci(\althhp)$ by
\begin{equation}\label{EqHor1}
((\bet)\ci\ep)_x=\tau_{\ax}(\ep_x)\quad(\fa x\in X).
\end{equation}
\item For a 2-cell
\[
\xy
(-14,0)*+{\yh}="0";
(14,0)*+{\zk}="2";
{\ar@/^1.2pc/^{\bet} "0";"2"};
{\ar@/_1.2pc/_{\betp} "0";"2"};
{\ar@{=>}^{\delta} (0,2);(0,-2)};
\endxy,
\]
define $\delta\ci(\althh)\co(\bet)\ci(\althh)\tc(\betp)\ci(\althh)$ by
\begin{equation}\label{EqHor2}
(\delta\ci(\althh))_x=\delta_{\ax}\quad(\fa x\in X).
\end{equation}
\end{enumerate}
\end{dfn}

Horizontal composition is denoted by \lq\lq$\ci$", while \lq\lq$\cdot$" denotes vertical composition. For example, for any diagram
\[
\xy
(-28,0)*+{\xg}="0";
(0,0)*+{\yh}="2";
(28,0)*+{\zk}="4";
{\ar@/^1.2pc/^{\althh} "0";"2"};
{\ar@/_1.2pc/_{\althhp} "0";"2"};
{\ar@/^1.2pc/^{\bet} "2";"4"};
{\ar@/_1.2pc/_{\betp} "2";"4"};
{\ar@{=>}^{\ep} (-14,2);(-14,-2)};
{\ar@{=>}^{\delta} (14,2);(14,-2)};
\endxy
\]
in $\Sbb$, we have an equality
\[ (\delta\ci\althhp)\cdot(\bet\ci\ep)=(\betp\ci\ep)\cdot(\delta\ci\althh). \]

In $\Sbb$, we can deal with both group homomorphisms and equivariant maps, in the following way.
\begin{ex}\label{ExOf1cell}
$\ $
\begin{enumerate}
\item Any homomorphism of finite groups $f\co G\to H$ induces a 1-cell $\frac{\pt}{f}\co\ptg\to\pth$. Here, $\pt\co\pt\to\pt$ denotes the unique map between one-point set $\pt$ with trivial group action.
\item For a fixed finite group $G$, a $G$-map $\al\co X\to Y$ induces a 1-cell $\frac{\al}{G}\co\xg\to\yg$. We call this type of 1-cell $G$-{\it equivariant}, or simply {\it equivariant}.
\end{enumerate}
\end{ex}

\begin{dfn}\label{DefAdjEq}
Let $\althh\co\xg\to\yh$ be a 1-cell.
\begin{enumerate}
\item $\althh$ is an {\it equivalence} if there is a 1-cell $\bet\co\yh\to\xg$ and 2-cells
\[ \rho\co\bet\ci\althh\tc\id_{\xg},\ \ \lam\co\althh\ci\bet\tc\id_{\yh}. \]
$\bet$ is called a {\it quasi-inverse} of $\althh$.
\item $\althh$ is an {\it adjoint equivalence} if there is a 1-cell $\bet$ and 2-cells $\rho,\lam$ as above, which moreover satisfy
\[ \althh\ci\rho=\lam\ci\althh,\ \ \rho\ci\bet=\bet\ci\lam \]
in the diagram
\[
\xy
(-40,0)*+{\xg}="0";
(-20,0)*+{\yh}="2";
(0,0)*+{\xg}="4";
(20,0)*+{\yh}="6";
(40,0)*+{\xg}="8";
{\ar^{\althh} "0";"2"};
{\ar^{\bet} "2";"4"};
{\ar_{\althh} "4";"6"};
{\ar^{\bet} "6";"8"};
{\ar@/_1.8pc/_{\id} "0";"4"};
{\ar@/^1.8pc/^{\id} "2";"6"};
{\ar@/_1.8pc/_{\id} "4";"8"};
{\ar@{=>}^{\rho} (-20,-3);(-20,-6)};
{\ar@{=>}^{\lam} (0,3);(0,6)};
{\ar@{=>}^{\rho} (20,-3);(20,-6)};
\endxy.
\]
\end{enumerate}
\end{dfn}

\medskip
\begin{rem}$($\cite[Remark 2.2.14]{N_BisetMackey}$)$
For any 1-cell $\althh\co\xg\to\yh$ in $\Sbb$, the following are equivalent.
\begin{enumerate}
\item $\althh$ is an equivalence.
\item $\althh$ is an adjoint equivalence.
\end{enumerate}
For this reason, in the later argument, we simply call it an {\it equivalence}.
\end{rem}

\begin{dfn}\label{DefInd}
Let $\iota\co H\hookrightarrow G$ be a monomorphism of groups.
For any $X\in\Ob(\Hs)$, we define $\Ind_{\iota}X\in\Ob(\Gs)$ by
\[ \Ind_{\iota}X=G\un{H}{\ti}X=(G\times X)/\sim, \]
where the equivalence relation $\sim$ is defined by
\begin{itemize}
\item[-] $(\xi,x)$ and $(\xi\ppr,x\ppr)$ in $G\times X$ are equivalent if there exists $h\in H$ satisfying
\[ \xi=\xi\ppr\iota(h),\ \ x\ppr=hx. \]
\end{itemize}
We denote the equivalence class of $(\xi,x)$ by $[\xi,x]\in\Ind_{\iota}X$. The $G$-action on $\Ind_{\iota}X$ is defined by
\[ g[\xi,x]=[g\xi,x] \]
for any $g\in G$ and $[\xi,x]\in\Ind_{\iota}X$.
\end{dfn}

The following has been shown in \cite{N_BisetMackey}.
\begin{prop}\label{PropIndEquiv}$($\cite[Proposition 3.1.2]{N_BisetMackey}$)$
Let $\iota\co H\hookrightarrow G$ be a monomorphism of groups.
For any $X\in\Ob(\Hs)$, if we define a map $\ups\co X\to\Ind_{\iota}X$ by
\[ \ups(x)=[e,x]\quad(\fa x\in X), \]
then the 1-cell
\[ \frac{\ups}{\iota}\co\frac{X}{H}\to \frac{\Ind_{\iota}X}{G} \]
becomes an equivalence. We call this an {\it $\Ind$-equivalence}.
\end{prop}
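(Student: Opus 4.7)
The plan is to construct an explicit quasi-inverse to $\frac{\ups}{\iota}$ and exhibit the required 2-cells. By the remark quoted just before the statement, an equivalence in $\Sbb$ is automatically an adjoint equivalence, so it suffices to produce a 1-cell $\bet\co\frac{\Ind_{\iota}X}{G}\to\frac{X}{H}$ together with 2-isomorphisms $\bet\ci\frac{\ups}{\iota}\tc\id_{\frac{X}{H}}$ and $\frac{\ups}{\iota}\ci\bet\tc\id_{\frac{\Ind_{\iota}X}{G}}$.

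First I would fix a set $S\subseteq G$ of representatives of the left cosets $G/\iota(H)$, chosen so that $e\in S$. Then every $g\in G$ has a unique decomposition $g=s(g)\iota(h(g))$ with $s(g)\in S$ and $h(g)\in H$. Using this, I define $\bet=\frac{\be}{\sig}$ by setting
\[
\be([\xi,x])=h(\xi)x,\qquad \sig_{[\xi,x]}(g)=h(g\xi)\,h(\xi)^{-1}.
\]
The key verifications are: (a) $\be$ and $\sig$ are well-defined on equivalence classes, which follows from the fact that replacing $\xi$ by $\xi\iota(h^{\ast})$ and $x$ by $(h^{\ast})^{-1}x$ multiplies $h(\xi)$ and $h(g\xi)$ by the same element $h^{\ast}$ on the right, leaving $h(\xi)x$ and $h(g\xi)h(\xi)^{-1}$ unchanged; (b) condition (i) of Definition \ref{DefS}(1) holds because $\be(g[\xi,x])=h(g\xi)x=\sig_{[\xi,x]}(g)\cdot\be([\xi,x])$; (c) condition (ii) becomes $h(gg'\xi)h(\xi)^{-1}=h(gg'\xi)h(g'\xi)^{-1}\cdot h(g'\xi)h(\xi)^{-1}$, which is immediate.

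The composite $\bet\ci\frac{\ups}{\iota}$ is then checked, using $\ups(x)=[e,x]$ and $h(e)=e$, $h(\iota(h))=h$ (valid since $e\in S$), to be literally the identity 1-cell on $\frac{X}{H}$, so one may take $\rho=\id$. For the other direction, I would propose the 2-cell
\[
\lam_{[\xi,x]}=s(\xi)\in G,\qquad \lam\co\frac{\ups}{\iota}\ci\bet\tc\id_{\frac{\Ind_{\iota}X}{G}},
\]
well-defined because $s(\xi)$ only depends on the coset $\xi\iota(H)$, hence on $[\xi,x]$. Condition (i) reads $[\xi,x]=s(\xi)\cdot[e,h(\xi)x]=[s(\xi)\iota(h(\xi)),x]=[\xi,x]$, and condition (ii) amounts to
\[
s(g\xi)\cdot\iota\bigl(h(g\xi)h(\xi)^{-1}\bigr)\cdot s(\xi)^{-1}=g,
\]
which follows at once from $s(g\xi)\iota(h(g\xi))=g\xi=g\,s(\xi)\iota(h(\xi))$. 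I expect no conceptual obstacle here; the only delicate point is the consistent bookkeeping of the decomposition $g=s(g)\iota(h(g))$ and verifying that the resulting $\sig$ is independent of the coset representative, which is the one place where the choice $e\in S$ is essential to make $\rho$ strictly the identity.
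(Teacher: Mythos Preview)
Your argument is correct: the quasi-inverse $\frac{\be}{\sig}$ built from a choice of coset representatives $S\ni e$ is a valid 1-cell, and the 2-cell $\lam$ with $\lam_{[\xi,x]}=s(\xi)$ verifies both conditions of Definition~\ref{DefS}(2) exactly as you indicate. The only caveat is presentational: the paper does not actually supply a proof of this proposition here but merely cites it from \cite[Proposition~3.1.2]{N_BisetMackey}, so there is no in-paper argument to compare yours against. Your construction is the standard explicit one and is precisely what one would expect the cited proof to contain.
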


\begin{dfn}\label{Def2Coproduct}
Let $\Cbb$ be a strict 2-category with invertible 2-cells.
For any pair of 0-cells $A_1$ and $A_2$ in $\Cbb$, their {\it bicoproduct} $(A_1\am A_2,\iota_1,\iota_2)$ is defined to be a triplet of 0-cell $A_1\am A_2$ and 1-cells
\[ A_1\ov{\iota_1}{\lra}A_1\am A_2\ov{\iota_2}{\lla}A_2, \]
satisfying the following conditions.
\begin{itemize}
\item[{\rm (i)}]
For any 0-cell $X$ and 1-cells $f_i\co A_i\to X\ (i=1,2)$, there exists a 1-cell $f\co A_1\am A_2\to X$ and 2-cells $\xi_i\co f\circ \iota_i\tc f_i\ (i=1,2)$ as in the following diagram.
\[
\xy
(0,-6)*+{X}="0";
(-18,10)*+{A_1}="2";
(0,10)*+{A_1\am A_2}="4";
(18,10)*+{A_2}="6";
{\ar_{f_1} "2";"0"};
{\ar_{f} "4";"0"};
{\ar^{f_2} "6";"0"};
{\ar^(0.4){\iota_1} "2";"4"};
{\ar_(0.4){\iota_2} "6";"4"};
{\ar@{=>}_{\xi_1} (-4,6);(-7.5,3)};
{\ar@{=>}^{\xi_2} (4,6);(7.5,3)};
\endxy
\]

\item[{\rm (ii)}]
For any triplets $(f,\xi_1,\xi_2)$ and $(f\ppr,\xi_1\ppr,\xi_2\ppr)$ as in {\rm (i)}, there exists a unique 2-cell $\eta\co f\tc f\ppr$ such that $\xi_i\ppr\cdot(\eta\circ \iota_i)=\xi_i\ (i=1,2)$, namely, the following diagram of 2-cells is commutative for $i=1,2$.
\[
\xy
(-10,6)*+{f\circ \iota_i}="0";
(10,6)*+{f\ppr\circ \iota_i}="2";
(0,-7)*+{f_i}="4";
(0,9)*+{}="6";
{\ar@{=>}^{\eta\circ \iota_i} "0";"2"};
{\ar@{=>}_{\xi_i} "0";"4"};
{\ar@{=>}^{\xi_i\ppr} "2";"4"};
{\ar@{}|\circlearrowright"6";"4"};
\endxy
\]
\end{itemize}
\end{dfn}

\begin{dfn}\label{Def2Pullback}
Let $\Cbb$ be a strict 2-category with invertible 2-cells.
For any 0-cells $A_1,A_2,B$ and 1-cells $f_i\co A_i\to B\ (i=1,2)$, {\it bipullback} of $f_1$ and $f_2$ is defined to be a quartet $(A_1\times_BA_2,\pi_1,\pi_2,\kappa)$ 
as in the diagram
\[
\xy
(-9,6)*+{A_1\times_BA_2}="0";
(9,6)*+{A_1}="2";
(-9,-6)*+{A_2}="4";
(9,-6)*+{B}="6";
{\ar^(0.6){\pi_1} "0";"2"};
{\ar_{\pi_2} "0";"4"};
{\ar^{f_1} "2";"6"};
{\ar_{f_2} "4";"6"};
{\ar@{<=}_{\kappa} (-2,-2);(2,2)};
\endxy
,
\]
which satisfies the following conditions.
\begin{itemize}
\item[{\rm (i)}]
For any diagram in $\Cbb$
\[
\xy
(-8,6)*+{X}="0";
(8,6)*+{A_1}="2";
(-8,-6)*+{A_2}="4";
(8,-6)*+{B}="6";
{\ar^{g_1} "0";"2"};
{\ar_{g_2} "0";"4"};
{\ar^{f_1} "2";"6"};
{\ar_{f_2} "4";"6"};
{\ar@{<=}^{\ep} (-2,-2);(2,2)};
\endxy
,
\]
there exist $g,\xi_1,\xi_2$ as in the diagram
\[
\xy
(-22,16)*+{X}="-2";
(-8,6)*+{A_1\times_BA_2}="0";
(8,6)*+{A_1}="2";
(-8,-7)*+{A_2}="4";
(8,-7)*+{B}="6";
{\ar^{g} "-2";"0"};
{\ar@/^1.24pc/^{g_1} "-2";"2"};
{\ar@/_1.24pc/_(0.68){g_2} "-2";"4"};
{\ar_(0.66){\pi_1} "0";"2"};
{\ar^{\pi_2} "0";"4"};
{\ar^{f_1} "2";"6"};
{\ar_{f_2} "4";"6"};
{\ar@{<=}_{\kappa} (-1.5,-3);(2.5,1)};
{\ar@{=>}^{\xi_2} (-12,3);(-16,-1)};
{\ar@{=>}_{\xi_1} (-8,9);(-6,14)};
\endxy
,
\]
satisfying $\ep\cdot(f_1\ci\xi_1)=(f_2\ci\xi_2)\cdot(\kappa\ci g)$, namely making the following diagram of 2-cells commutative.
\[
\xy
(-12,6)*+{f_1\ci\pi_1\ci g}="0";
(12,6)*+{f_2\ci\pi_2\ci g}="2";
(-12,-6)*+{f_1\ci g_1}="4";
(12,-6)*+{f_2\ci g_2}="6";
{\ar@{=>}^{\kappa\ci g} "0";"2"};
{\ar@{=>}_{f_1\ci\xi_1} "0";"4"};
{\ar@{=>}^{f_2\ci\xi_2} "2";"6"};
{\ar@{=>}_{\ep} "4";"6"};
{\ar@{}|\circlearrowright"0";"6"};
\endxy
\]
\item[{\rm (ii)}]
For any triplets $(g,\xi_1,\xi_2)$ and $(g\ppr,\xi_1\ppr,\xi_2\ppr)$ as in {\rm (i)}, there exists a unique 2-cell $\zeta\co g\tc g^{\prime}$ which satisfies $\xi_i\ppr\cdot(\pi_i\ci\zeta)=\xi_i\ (i=1,2)$.
\end{itemize}
\end{dfn}

Bicoproducts and bipullbacks are uniquely determined up to an equivalence.

Existence of bicoproducts and bipullbacks in $\Sbb$ are shown in \cite{N_BisetMackey}, as in the following propositions.
\begin{prop}\label{Prop2CoprodVari}$($\cite[Proposition 3.2.15]{N_BisetMackey}$)$
Let $\xg$ and $\yh$ be any pair of 0-cells in $\Sbb$. Denote the monomorphisms
\begin{eqnarray*}
&G\to G\times H\ ; \ g\mapsto (g,e)&\\
&H\to G\times H\ ; \ h\mapsto (e,h)&
\end{eqnarray*}
by $\iog$ and $\ioh$ respectively, and denote the natural maps
\begin{eqnarray*}
&X\to\Ind_{\iog}X\am\Ind_{\ioh}Y\ ;\ x\mapsto [e,x]\in\Ind_{\iog}X&\\
&Y\to\Ind_{\iog}X\am\Ind_{\ioh}Y\ ;\ y\mapsto [e,y]\in\Ind_{\ioh}Y&
\end{eqnarray*}
by $\ups_X$ and $\ups_Y$.
Then
\[ \xg\ov{\frac{\ups_X}{\iog}}{\lra}\frac{\Ind_{\iog}X\am\Ind_{\ioh}Y}{G\times H}\ov{\frac{\ups_Y}{\ioh}}{\lla}\yh \]
gives a bicoproduct of $\xg$ and $\yh$ in $\Sbb$.
\end{prop}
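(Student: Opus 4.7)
The plan is to verify Definition~\ref{Def2Coproduct} directly by exploiting the fact that every element of $\Ind_{\iog}X\am\Ind_{\ioh}Y$ has the form $(g,h)\cdot[(e,e),x]=[(g,h),x]$ (or similarly with $y\in Y$), so both the mediating 1-cell and its defining 2-cell are forced by their values at the \emph{basepoints} $\ups_X(x)=[(e,e),x]$ and $\ups_Y(y)=[(e,e),y]$, together with equivariance.

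For existence, given $f_1=\frac{\al_1}{\thh_1}\co\xg\to\zk$ and $f_2=\frac{\al_2}{\thh_2}\co\yh\to\zk$, I would define $f=\frac{\al}{\thh}\co\frac{\Ind_{\iog}X\am\Ind_{\ioh}Y}{G\ti H}\to\zk$ by
\[
\al([(g,h),x])=\thh_{1,x}(g)\al_1(x),\qquad\thh_{[(g,h),x]}(g\ppr,h\ppr)=\thh_{1,gx}(g\ppr)
\]
on the $\Ind_{\iog}X$-summand, and symmetrically by
\[
\al([(g,h),y])=\thh_{2,y}(h)\al_2(y),\qquad\thh_{[(g,h),y]}(g\ppr,h\ppr)=\thh_{2,hy}(h\ppr)
\]
on the $\Ind_{\ioh}Y$-summand. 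Well-definedness under the relation $[(g,h),x]=[(gg\pprr,h),g\pprr{}\iv x]$ and the cocycle condition of Definition~\ref{DefS}~(1)(ii) both follow from the cocycle property of $\thh_1$ (resp.\ $\thh_2$) via a direct manipulation. With these choices, plugging in $\ups_X(x)=[(e,e),x]$ gives $f\ci\frac{\ups_X}{\iog}=\frac{\al_1}{\thh_1}=f_1$ on the nose (since $\thh_{1,x}(e)=e$), and likewise $f\ci\frac{\ups_Y}{\ioh}=f_2$, so the required 2-cells $\xi_1,\xi_2$ can be taken to be identities.

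For uniqueness, let $(f,\xi_1,\xi_2)$ and $(f\ppr,\xi_1\ppr,\xi_2\ppr)$ be any two such triples, with $f=\frac{\al}{\thh}$ and $f\ppr=\frac{\al\ppr}{\thh\ppr}$. A candidate 2-cell $\eta\co f\tc f\ppr$ is a family $\{\eta_p\in K\}_p$, and the required equalities $\xi_i\ppr\cdot(\eta\ci\iota_i)=\xi_i$ determine $\eta_{[(e,e),x]}$ for $x\in X$ and $\eta_{[(e,e),y]}$ for $y\in Y$ uniquely in terms of the given $\xi_i,\xi_i\ppr$. The relation (ii) of Definition~\ref{DefS}~(2) then forces $\eta_{[(g,h),x]}$ and $\eta_{[(g,h),y]}$ uniquely by conjugation with the acting parts, and a routine check verifies that the resulting family is well-defined on the equivalence classes, satisfies condition (i) of the 2-cell axiom, and restores the required compatibilities with the $\xi_i,\xi_i\ppr$.

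The main obstacle is essentially bookkeeping: keeping track of which factor of $G\ti H$ acts on which summand, and invoking the cocycle identity for $\thh_1,\thh_2$ repeatedly when checking well-definedness on the equivalence classes defining $\Ind_{\iog}X$ and $\Ind_{\ioh}Y$. While the overall structure is forced by the requirement that $f$ restrict to $f_i$ along the two inclusions, the explicit verifications of well-definedness and of the 2-cell axioms must be carried out with care.
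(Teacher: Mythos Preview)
Your approach is correct: a direct verification of Definition~\ref{Def2Coproduct} via the explicit formulas you give does the job, and the key observation that every point of $\Ind_{\iog}X\am\Ind_{\ioh}Y$ is a $(G\times H)$-translate of a basepoint $\ups_X(x)$ or $\ups_Y(y)$ is exactly what makes both the mediating $1$-cell and the comparison $2$-cell $\eta$ uniquely determined.

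However, there is nothing to compare against in this paper: Proposition~\ref{Prop2CoprodVari} is stated here without proof, with a citation to \cite[Proposition~3.2.15]{N_BisetMackey}. The present paper simply imports the result. Your write-up therefore supplies what the paper omits, and the strategy you outline (explicit construction on each induced summand using the cocycle identity for $\thh_1,\thh_2$, identity $2$-cells $\xi_i$, and propagation of $\eta$ from basepoints by the $2$-cell axiom~(ii)) is the natural one; it is essentially the argument one would expect to find in the cited reference.

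One small remark on your uniqueness paragraph: you correctly argue that $\eta$ is \emph{determined} at every point by its value at basepoints together with condition~(ii) of Definition~\ref{DefS}(2), but you should make explicit that this forced family is in fact a $2$-cell $f\Rightarrow f\ppr$ (i.e.\ that condition~(i) holds and that the values are independent of the chosen representative $[(g,h),x]$). You acknowledge this as ``a routine check,'' and it is, but it uses that $\xi_1,\xi_1\ppr$ are themselves $2$-cells (so satisfy their own compatibility with the acting parts), not merely arbitrary families of elements of $K$.
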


\begin{prop}\label{Prop2Pullback}$($\cite[Proposition 3.2.17]{N_BisetMackey}$)$
Let $\althh\co\xg\to\zk$ and $\bet\co\yh\to\zk$ be any pair of 1-cells in $\Sbb$.
Denote the natural projection homomorphisms by
\[ \prg\co G\times H\to G,\ \ \prh\co G\times H\to H. \]
If we
\begin{itemize}
\item[-] put $F=\{(x,y,k)\in X\times Y\times K\mid \be(y)=k\ax \}$, and put
\begin{eqnarray*}
&\wp_X\co F\to X\ ;\ (x,y,k)\mapsto x,&\\
&\wp_Y\co F\to Y\ ;\ (x,y,k)\mapsto y,&
\end{eqnarray*}
\item[-] equip $F$ with a $G\times H$-action
\begin{eqnarray*}
&(g,h)(x,y,k)=(gx,hy,\tau_{\be(b)}(h) k\thh_{\ax}(g)\iv)&\\
&(\fa (g,h)\in G\times H,\ \ \fa (x,y,k)\in F),&
\end{eqnarray*}
\item[-] define a 2-cell $\kappa\co\althh\ci\frac{\wp_X}{\prg}\tc\bet\ci\frac{\wp_Y}{\prh}$ by
\[ \kappa_{(x,y,k)}=k, \]
\end{itemize}
then the diagram
\[
\xy
(-12,7)*+{\frac{F}{G\times H}}="0";
(12,7)*+{\xg}="2";
(-12,-6)*+{\yh}="4";
(12,-6)*+{\zk}="6";
{\ar^(0.52){\frac{\wp_X}{\prg}} "0";"2"};
{\ar_{\frac{\wp_Y}{\prh}} "0";"4"};
{\ar^{\althh} "2";"6"};
{\ar_{\bet} "4";"6"};
{\ar@{<=}^{\kappa} (-2.5,-1.5);(2.5,2.5)};
\endxy
\]
gives a bipullback in $\Sbb$.
\end{prop}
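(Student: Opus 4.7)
The plan is to verify the conditions of Definition \ref{Def2Pullback} directly: well-definedness of the proposed quartet in $\Sbb$, followed by the existence and uniqueness clauses of the universal property.

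For well-definedness, one checks that $F$ is stable under the proposed $G\times H$-action: from $\be(y)=k\ax$ one computes $\be(hy)=\tau_y(h)k\ax=(\tau_y(h)k\thh_x(g)\iv)\al(gx)$, placing $(gx,hy,\tau_y(h)k\thh_x(g)\iv)$ back in $F$. Associativity of the action reduces to the cocycle identities $\tau_y(hh\ppr)=\tau_{h\ppr y}(h)\tau_y(h\ppr)$ and $\thh_x(gg\ppr)=\thh_{g\ppr x}(g)\thh_x(g\ppr)$ of Definition \ref{DefS}(1)(ii). The 1-cells $\frac{\wp_X}{\prg},\frac{\wp_Y}{\prh}$ satisfy the 1-cell axioms trivially, because their acting parts are the group homomorphisms $\prg,\prh$, constant in the base variable. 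For $\kappa$, axiom (i) of Definition \ref{DefS}(2) is literally the defining equation of $F$; axiom (ii) evaluated at $(g,h)$ rearranges to $\kappa_{(g,h)(x,y,k)}=\tau_y(h)k\thh_x(g)\iv$, which is exactly the $K$-component of the defined action. This same computation is what \emph{forces} the form of the action.

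For the existence clause, given a test 0-cell $\wl$ with 1-cells $g_1=\frac{\al_1}{\thh_1}\co\wl\to\xg$, $g_2=\frac{\al_2}{\thh_2}\co\wl\to\yh$ and a 2-cell $\ep\co\althh\ci g_1\tc\bet\ci g_2$, I define the mediator $g\co\wl\to\frac{F}{G\times H}$ by the base map $w\mapsto(\al_1(w),\al_2(w),\ep_w)$ (which lies in $F$ by axiom (i) for $\ep$) and the acting part $((\thh_1)_w,(\thh_2)_w)\co L\to G\times H$. Axiom (i) for $g$ to be a 1-cell unfolds exactly into axiom (ii) of $\ep$ after substituting the acting parts of $\althh\ci g_1$ and $\bet\ci g_2$; axiom (ii) for $g$ follows mechanically from those for $g_1$ and $g_2$. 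One then observes that $\frac{\wp_X}{\prg}\ci g=g_1$ and $\frac{\wp_Y}{\prh}\ci g=g_2$ strictly, so $\xi_1,\xi_2$ can be taken as identities, and the required equation $\ep\cdot(\althh\ci\xi_1)=(\bet\ci\xi_2)\cdot(\kappa\ci g)$ collapses to $\ep_w=\kappa_{(\al_1(w),\al_2(w),\ep_w)}=\ep_w$.

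For uniqueness, given another triplet $(g\ppr,\xi_1\ppr,\xi_2\ppr)$, write the base maps of $g,g\ppr$ as $(x_w,y_w,k_w)$ and $(x_w\ppr,y_w\ppr,k_w\ppr)$. The equations $\xi_i\ppr\cdot(\pi_i\ci\zeta)=\xi_i$ (with $\pi_1=\frac{\wp_X}{\prg}$ and $\pi_2=\frac{\wp_Y}{\prh}$) force $\zeta_w=((\xi_1\ppr)_w\iv(\xi_1)_w,\,(\xi_2\ppr)_w\iv(\xi_2)_w)\in G\times H$, so uniqueness is immediate. The content is to show that this $\zeta_w=:(s_w,t_w)$ actually relates the two base maps via the $G\times H$-action: the $X$- and $Y$-components follow directly from the 2-cell axioms of $\xi_i,\xi_i\ppr$, but matching the $K$-component requires showing $k_w\ppr=\tau_{y_w}(t_w)k_w\thh_{x_w}(s_w)\iv$. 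I expect this to be the main obstacle, and would handle it by combining the compatibility relations $\ep_w\thh_{x_w}((\xi_1)_w)=\tau_{y_w}((\xi_2)_w)k_w$ and its primed counterpart (both coming from $\ep=\kappa\ci g=\kappa\ci g\ppr$) with the derived identity $\tau_{y_w}((\xi_2\ppr)_w\iv(\xi_2)_w)=\tau_{y_w\ppr}((\xi_2\ppr)_w)\iv\tau_{y_w}((\xi_2)_w)$ and its $\thh$-analogue, which follow from the cocycle identity of Definition \ref{DefS}(1)(ii); the $K$-component computation then telescopes to $k_w\ppr$. The extra care here stems from the fact that $\tau_y$ and $\thh_x$ are not group homomorphisms in general, so inverses and products must be handled via the cocycle.
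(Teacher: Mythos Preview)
Your direct verification is correct and complete in outline; the computations you sketch all go through. One minor slip: when you write ``both coming from $\ep=\kappa\ci g=\kappa\ci g\ppr$'' this is not literally true for arbitrary $(g,\xi_1,\xi_2)$; what you actually use (and correctly state) is the compatibility equation $\ep\cdot(\althh\ci\xi_1)=(\bet\ci\xi_2)\cdot(\kappa\ci g)$, which at $w$ reads $\ep_w\,\thh_{x_w}((\xi_1)_w)=\tau_{y_w}((\xi_2)_w)\,k_w$. With that identity and its primed version, the cocycle manipulation $\tau_{y_w}((\xi_2\ppr)_w t_w)=\tau_{t_w y_w}((\xi_2\ppr)_w)\tau_{y_w}(t_w)=\tau_{y\ppr_w}((\xi_2\ppr)_w)\tau_{y_w}(t_w)$ (and its $\thh$-analogue) does indeed telescope to $k\ppr_w=\tau_{y_w}(t_w)\,k_w\,\thh_{x_w}(s_w)\iv$, and axiom {\rm (ii)} for $\zeta$ follows componentwise from axiom {\rm (ii)} for the $\xi_i,\xi_i\ppr$.

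As for comparison: the present paper does not give its own proof of this proposition. It is quoted verbatim from \cite[Proposition 3.2.17]{N_BisetMackey} as part of the preliminaries, so there is nothing here to compare your argument against. Your direct verification is exactly the kind of proof one would expect in the cited reference.
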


Thus the 2-category $\Sbb$ has the following.
\begin{itemize}
\item[-] Initial 0-cell $\ems=\frac{\ems}{e}$, and terminal 0-cell $\pte$. Here $e$ denotes the trivial group.
\item[-] Any finite bicoproduct.
\item[-] Any bipullback.
\end{itemize}

\begin{dfn}\label{DefStabsurj}
A 1-cell $\oc$ is called {\it stab-surjective}, if the following conditions are satisfied.
\begin{itemize}
\item[{\rm (i)}] $Y=H\al(X)$ holds.
\item[{\rm (ii)}] If $x,x\ppr\in X$ and $h,h\ppr\in H$ satisfy $h\al(x)=h\ppr\al(x\ppr)$, then there exists $g\in G$ which satisfies $x\ppr=gx$ and $h=h\ppr\thh_x(g)$.
\end{itemize}
\end{dfn}

Properties of stab-surjective 1-cells are as follows (\cite{N_BisetMackey}).
\begin{prop}\label{PropStabsurj}$($\cite[section 4.1]{N_BisetMackey}$)$
The following holds for the stab-surjectivity.
\begin{enumerate}
\item If $\althh$ is an equivalence, then $\althh$ is stab-surjective.
\item Stab-surjectivity is closed under equivalences by 2-cells. Namely, if there exists a 2-cell $\ep\co\althh\tc\althhp$ and if $\althh$ is stab-surjective, then so is $\althhp$.
\item Stab-surjectivity is closed under compositions of 1-cells. Namely, if $\xg\ov{\althh}{\lra}\yh\ov{\bet}{\lra}\zk$ is a sequence of stab-surjective 1-cells, then so is $\bet\ci\althh$.
\item Stab-surjectivity is stable under bipullbacks. Namely, if
\[
\xy
(-8,6)*+{\wl}="0";
(8,6)*+{\xg}="2";
(-8,-6)*+{\yh}="4";
(8,-6)*+{\zk}="6";
{\ar^{\frac{\gamma}{\mu}} "0";"2"};
{\ar_{\frac{\delta}{\nu}} "0";"4"};
{\ar^{\althh} "2";"6"};
{\ar_{\bet} "4";"6"};
{\ar@{<=}^{\ep} (-2,-2);(2,2)};
\endxy
\]
is a bipullback in $\Sbb$ and if $\bet$ is stab-surjective, then so is $\frac{\gamma}{\mu}$.
\end{enumerate}
\end{prop}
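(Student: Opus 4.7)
The plan is to address (1)--(3) by direct verification from Definition \ref{DefStabsurj}, and then reduce (4) to the explicit bipullback construction of Proposition \ref{Prop2Pullback}, exploiting the fact that stab-surjectivity is equivalence-invariant (which is (1) combined with (2)).

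For (1), let $\bet$ be a quasi-inverse of $\althh$ equipped with 2-cells $\lam\co\althh\ci\bet\tc\id_{\yh}$ and $\rho\co\bet\ci\althh\tc\id_{\xg}$. Condition (i) is immediate since $y=\lam_y\al(\be(y))$ realises each $y\in Y$ as an element of $H\al(X)$. For (ii), given $h\al(x)=h\ppr\al(x\ppr)$, applying $\be$ and rewriting via $x=\rho_x\be\al(x)$, $x\ppr=\rho_{x\ppr}\be\al(x\ppr)$ pins down an explicit $g\in G$ with $x\ppr=gx$; the identity $h=h\ppr\thh_x(g)$ then follows from the acting-part conditions of $\rho,\lam$ (Definition \ref{DefS}(2)(ii)), together with the triangle identities of the adjoint equivalence (available by the remark following Definition \ref{DefAdjEq}).

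For (2), the 2-cell $\ep\co\althh\tc\althhp$ supplies $\al\ppr(x)=\ep_x\al(x)$ and $\thh\ppr_x(g)=\ep_{gx}\thh_x(g)\ep_x\iv$, which transfer both stab-surjectivity conditions from $\althh$ to $\althhp$ by direct substitution. For (3), with $\xg\ov{\althh}{\lra}\yh\ov{\bet}{\lra}\zk$ both stab-surjective, condition (i) follows from $Z=K\be(Y)=K\be(H\al(X))\se K\be(\al(X))$, using $\be(hy)=\tau_y(h)\be(y)\in K\be(y)$. Condition (ii) is handled in two stages: from $k\be\al(x)=k\ppr\be\al(x\ppr)$, apply stab-surjectivity of $\bet$ to $\al(x),\al(x\ppr)\in Y$ with elements $k,k\ppr\in K$ to obtain $h\in H$ with $\al(x\ppr)=h\al(x)$ and $k=k\ppr\tau_{\al(x)}(h)$; then apply stab-surjectivity of $\althh$ to $x,x\ppr\in X$ with elements $h,e\in H$ to obtain $g\in G$ with $x\ppr=gx$ and $h=\thh_x(g)$; substituting yields $k=k\ppr(\tau\ci\thh)_x(g)$.

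For (4), since bipullbacks are unique up to equivalence and stab-surjectivity is equivalence-invariant by (1)--(2), it suffices to verify the claim for the explicit model $\frac{\wp_X}{\prg}\co\frac{F}{G\ti H}\to\xg$ of Proposition \ref{Prop2Pullback}. Condition (i) is easy: for any $x\in X$, apply stab-surjectivity of $\bet$ to write $\al(x)=k\ppr\be(y)$ for some $y\in Y$, $k\ppr\in K$; then $(x,y,(k\ppr)\iv)\in F$ maps to $x$. For condition (ii), suppose $(x_i,y_i,k_i)\in F$ for $i=1,2$ and $g_1x_1=g_2x_2$. Put $g=g_2\iv g_1$, so $x_2=gx_1$. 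The defining equations $\be(y_i)=k_i\al(x_i)$ combined with $\al(x_2)=\thh_{x_1}(g)\al(x_1)$ rearrange into $k_1\iv\be(y_1)=\thh_{x_1}(g)\iv k_2\iv\be(y_2)$; applying condition (ii) for $\bet$ produces $h\in H$ with $y_2=hy_1$ and $\tau_{y_1}(h)=k_2\thh_{x_1}(g)k_1\iv$, which is precisely the data showing $(g,h)(x_1,y_1,k_1)=(x_2,y_2,k_2)$ in $F$ and $g_1=g_2\prg(g,h)$. The main obstacle I anticipate is part (1), where extracting $h=h\ppr\thh_x(g)$ from the triangle identities demands careful acting-part bookkeeping; elsewhere, once the correct group elements are identified, only routine substitution is required.
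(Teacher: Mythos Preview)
The paper does not give its own proof of this proposition: it is stated with the citation ``(\cite[section 4.1]{N\_BisetMackey})'' and no argument follows, so there is no in-paper proof to compare your attempt against. Your proposal therefore cannot be judged on fidelity to this paper; it has to stand on its own.

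On that score, your arguments for (2), (3), and (4) are correct and complete. In (4), your reduction to the explicit model of Proposition~\ref{Prop2Pullback} via (1)--(2) and uniqueness of bipullbacks up to equivalence is exactly the right move, and the verification you give is accurate (modulo the typo in the paper's formula for the $G\times H$-action on $F$, which you have silently corrected to $\tau_{y}(h)\,k\,\thh_{x}(g)^{-1}$). For (1), your outline is reasonable but a bit hand-wavy: you assert that ``the identity $h=h\ppr\thh_x(g)$ then follows from the acting-part conditions of $\rho,\lam$\ldots\ together with the triangle identities,'' which is true but requires writing out explicitly which element $g$ you take (namely $g=\rho_{x\ppr}\,\tau_{\al(x)}(h^{\prime-1}h)\,\rho_x^{-1}$ or the analogous expression) and then checking both $x\ppr=gx$ and $h=h\ppr\thh_x(g)$ by hand. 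Since you flagged this yourself as the delicate step, you should carry it out rather than leaving it at the level of a sketch.
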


\begin{dfn}\label{DefSIm}
Let $\oc$ be any 1-cell in $\Sbb$.
\begin{enumerate}
\item Define $\SIm(\althh)\in\Ob(\Hs)$ by
\[ \SIm(\althh)=\HG X=(H\times X)/\sim, \]
where the relation $\sim$ is defined as follows.
\begin{itemize}
\item[-] $(\eta,x),(\eta\ppr,x\ppr)\in H\times X$ are equivalent if there exists $g\in G$ satisfying
\[ x\ppr=gx\ \ \ \text{and}\ \ \ \eta=\eta\ppr\thh_x(g). \]
\end{itemize}
We denote the equivalence class of $(\eta,x)$ by $[\eta,x]$. The $H$-action on $\SIm(\althh)$ is given by
\[ h[\eta, x]=[h\eta,x]. \]
We call $\SIm(\althh)$ the {\it stabilizerwise image} of $\althh$.
\item Define a map $\ups_{\al}\co X\to\SIm(\althh)$ by
\[ \ups_{\al}(x)=[e,x]\quad(\fa x\in X), \]
then $\ulthh\co\xg\to \frac{\SIm(\althh)}{H}$ becomes a stab-surjective 1-cell.
\end{enumerate}
\end{dfn}

\begin{rem}
$\SIm(\althh)$ essentially depends only on the acting part $\thh$.
\end{rem}

The following has been shown in \cite{N_BisetMackey}.
\begin{prop}\label{PropSIm}$($\cite[Proposition 4.2.6]{N_BisetMackey}$)$
For any 1-cell $\oc$, if we define a map $\wt{\al}$ by
\[ \wt{\al}\co\SIm(\althh)\to Y\ ;\ [\eta,x]\mapsto\eta\al(x), \]
then the following diagram in $\Sbb$ becomes commutative.
\[
\xy
(-20,0)*+{\xg}="0";
(0,8)*+{\frac{\SIm(\althh)}{H}}="2";
(0,-6)*+{}="3";
(20,0)*+{\frac{Y}{H}}="4";
{\ar^(0.46){\ulthh} "0";"2"};
{\ar^(0.54){\frac{\wt{\al}}{H}} "2";"4"};
{\ar@/_0.8pc/_{\althh} "0";"4"};
{\ar@{}|\circlearrowright "2";"3"};
\endxy
\]
We call this the {\it $\mathit{SIm}$-factorization} of $\althh$.
\end{prop}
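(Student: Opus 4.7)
The proof reduces to three routine verifications, and there is no real obstacle beyond careful bookkeeping with the definition of composition of 1-cells in $\Sbb$.

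First, I would check that the assignment $[\eta,x]\mapsto \eta\al(x)$ descends to a well-defined map $\wt{\al}\co \SIm(\althh)\to Y$. If $(\eta,x)\sim(\eta\ppr,x\ppr)$ with $g\in G$ satisfying $x\ppr=gx$ and $\eta=\eta\ppr\thh_x(g)$, then using condition (i) of Definition \ref{DefS},
\[
\eta\al(x)=\eta\ppr\thh_x(g)\al(x)=\eta\ppr\al(gx)=\eta\ppr\al(x\ppr),
\]
so $\wt{\al}$ is well-defined. $H$-equivariance is then immediate from the formula $h[\eta,x]=[h\eta,x]$, since $\wt{\al}(h[\eta,x])=h\eta\al(x)=h\wt{\al}([\eta,x])$. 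Hence $\frac{\wt{\al}}{H}\co \frac{\SIm(\althh)}{H}\to\yh$ is a legitimate 1-cell in $\Sbb$, of equivariant type as in Example \ref{ExOf1cell} (2).

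Next, I would verify that the composition $\frac{\wt{\al}}{H}\ci\ulthh$ agrees with $\althh$ on the nose, using the composition rule in Definition \ref{DefS}. The 1-cell $\ulthh=\frac{\ups_{\al}}{\thh}$ has base map $\ups_{\al}$ and acting part $\thh$, while $\frac{\wt{\al}}{H}$ has base map $\wt{\al}$ and constant acting part $\id_H$. The composed base map satisfies
\[
(\wt{\al}\ci\ups_{\al})(x)=\wt{\al}([e,x])=e\cdot\al(x)=\al(x),
\]
so it equals $\al$. The composed acting part at $x\in X$ is
\[
(\id_H)_{\ups_{\al}(x)}\ci\thh_x=\id_H\ci\thh_x=\thh_x,
\]
so it equals $\thh$. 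Therefore $\frac{\wt{\al}}{H}\ci\ulthh=\althh$ as 1-cells, and the diagram commutes strictly (so the $\circlearrowright$ in the displayed diagram can be taken to be the identity 2-cell).

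Since both verifications are one-line applications of the defining axioms for 1-cells in $\Sbb$ and the explicit formula for horizontal composition of acting parts, there is no substantive obstacle; the only subtlety is keeping track of the fact that the acting part of $\ulthh$ coincides with $\thh$ while the acting part of $\frac{\wt{\al}}{H}$ is constantly $\id_H$, which is precisely what makes the composed acting part come out to be $\thh$ rather than something larger.
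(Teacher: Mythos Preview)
Your proof is correct. Note that the paper does not actually supply its own proof of this proposition: it is quoted from \cite[Proposition 4.2.6]{N_BisetMackey} and stated without argument, so there is nothing to compare against beyond observing that your direct verification (well-definedness of $\wt{\al}$, $H$-equivariance, and the on-the-nose equality $\frac{\wt{\al}}{H}\ci\ulthh=\althh$) is exactly the expected one.
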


The following ensures the uniqueness of the $\SIm$-factorization, up to equivariant isomorphisms (\cite{N_BisetMackey}).
\begin{rem}\label{PropSImFactorSystem}$($\cite[Proposition 4.2.7]{N_BisetMackey}$)$
Let $\oc$ be any 1-cell, and let $\althh=\frac{\wt{\al}}{H}\ci\ulthh$ be its $\SIm$-factorization. If there is another factorization of $\althh$ as
\[
\xy
(-20,0)*+{\xg}="0";
(0,8)*+{\frac{W}{H}}="2";
(0,-6)*+{}="3";
(20,0)*+{\yh}="4";
{\ar^(0.46){\bet} "0";"2"};
{\ar^(0.54){\frac{\gamma}{H}} "2";"4"};
{\ar@/_0.8pc/_{\althh} "0";"4"};
{\ar@{=>}^{\ep} (0,3.4);(0,-1)};
\endxy
\]
in which $\bet$ is stab-surjective, then the $H$-map
\[ \om\co\SIm(\althh)\to W\ ;\ [\eta,x]\mapsto\eta\ep_x\be(x) \]
gives an isomorphism satisfying $\gamma\ci\om=\wt{\al}$, and $\ep$ gives a 2-cell $\ep\co\bet\tc\frac{\om}{H}\ci\ulthh$. As a diagram, this is depicted as follows.
\[
\xy
(-11,8)*+{\xg}="0";
(11,8)*+{\frac{W}{H}}="2";
(-11,-7)*+{\frac{\SIm(\althh)}{H}}="4";
(11,-7)*+{\yh}="6";
{\ar^{\bet} "0";"2"};
{\ar_{\ulthh} "0";"4"};
{\ar^{\frac{\gamma}{H}} "2";"6"};
{\ar_{\frac{\wt{\al}}{H}} "4";"6"};
{\ar_{\frac{\om}{H}} "4";"2"};
{\ar@{=>}_(0.3){\ep} (-4,4);(-6.5,-0.5)};
{\ar@{}|\circlearrowright (8,0);(5,-5)};
\endxy
\]
\end{rem}

\section{Bifunctor $\Bbbb$}
Let $\mathrm{CAT}$ denote the 2-category of categories, whose 1-cells are functors, and 2-cells are natural transformations. In this section, we give a bifunctor $\Bbbb\co\Sbb^{\op}\to\mathrm{CAT}$, which associates $\Bbbb(\xg)=\Gs/X$ to any 0-cell $\xg$ in $\Sbb$.
\begin{rem}
As in \cite{Groth}, to avoid a set-theoretical difficulty to deal with \lq\lq 2-category of categories", we use the terminology {\it bifunctor} to mean simply a correspondence
\begin{eqnarray*}
\text{0-cell}\ \xg&\mapsto& \text{category}\ \Bbbb(\xg)=\Gs/X,\\
\text{1-cell}\ \oc&\mapsto& \text{functor}\ \Bbbb(\althh)=\ata\co\Hs/Y\to\Gs/X,\\
\text{2-cell}\ \ep\co\althh\tc\althhp&\mapsto& \text{natural transformation}\ \Bbbb(\ep)\co\Bbbb(\althh)\tc\Bbbb(\althhp)
\end{eqnarray*}
with natural isomorphisms
\[ \Bbbb(\bet\ci\althh)\cong\Bbbb(\althh)\ci\Bbbb(\bet),\quad \Bbbb(\id_{\xg})\cong\Id_{\Bbbb(\xg)}, \]
which, when we take isomorphism classes $\Afr(\xg)=\Ob(\Gs/X)/\cong$ at each $\xg$, becomes functorial (Definition \ref{DefOmega}, Corollary \ref{CorOmega}).
\end{rem}

\begin{dfn}\label{DefAst}
Let $\oc$ be a 1-cell in $\Sbb$. Define a functor $\ata\co\Hs/Y\to\Gs/X$ as follows.
\begin{enumerate}
\item For $\Bb=\Bbr\in\Ob(\Hs/Y)$, define $\ata\Bb=(\XY B\ov{p_X}{\lra}X)$ by the following.
\begin{itemize}
\item[{\rm (i)}] $\XY B=\{(x,b)\mid \al(x)=\bfr(b)\}$ is the usual fibered product of sets.
\item[{\rm (ii)}] $G$-action on $\XY B$ is given by
\[ g(x,b)=(gx,\thh_x(g)b)\quad (\fa g\in G, (x,b)\in\XY B). \]
\item[{\rm (iii)}] $p_X\co \XY B\to X$ is the projection onto $X$, which is obviously $G$-equivariant.
\end{itemize}
\item For a morphism $f\co\Bb\to\Bbp$ in $\Hs/Y$, define $\ata(f)$ by
\[ \ata (f)\co\XY B\to \XY B\ppr\ ;\ (x,b)\mapsto (x,f(b)). \]
\end{enumerate}
\end{dfn}

\begin{rem}\label{RemAst}
For a $G$-equivariant 1-cell $\frac{\al}{G}\co\xg\to\yg$, the functor $(\frac{\al}{G})\uas\co\Gs/Y\to\Gs/X$ agrees with the usual pullback functor
\[ \al\uas=\XY -\co\Gs/Y\to\Gs/X. \]
\end{rem}

\begin{prop}\label{PropAst}
Let $\oc$ be any 1-cell, and let $\Bb$ be any object in $\Hs/Y$. Then the commutative diagram
\begin{equation}\label{PBEquiv}
\xy
(-9,8)*+{\xg}="0";
(9,8)*+{\frac{\XY B}{G}}="2";
(-9,-8)*+{\yh}="4";
(9,-8)*+{\frac{B}{H}}="6";
{\ar_{\frac{p_X}{G}} "2";"0"};
{\ar_{\althh} "0";"4"};
{\ar^{\althhd} "2";"6"};
{\ar^{\frac{\bfr}{H}} "6";"4"};
{\ar@{}|\circlearrowright "0";"6"};
\endxy
\end{equation}
where $\althhd$ is defined by
\begin{eqnarray*}
\al\di(x,b)=b,\quad \thh\di_{(x,b)}=\thh_x
\end{eqnarray*}
for any $(x,b)\in \XY B$, gives a bipullback in $\Sbb$.
\end{prop}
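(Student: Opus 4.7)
The plan is to deduce the claim from Proposition \ref{Prop2Pullback} by exhibiting an equivalence between the given apex $\frac{\XY B}{G}$ and the canonical apex produced there. Applying Proposition \ref{Prop2Pullback} to $\althh\co\xg\to\yh$ and $\frac{\bfr}{H}\co\frac{B}{H}\to\yh$, and noting that the acting part of $\frac{\bfr}{H}$ is the constant $\id_H$ (so $\tau_{\bfr(b)}(h\ppr)=h\ppr$), one obtains the canonical bipullback with apex $\frac{F}{G\times H}$, where $F=\{(x,b,h)\in X\times B\times H\mid \bfr(b)=h\ax\}$ carries the $(G\times H)$-action $(g,h\ppr)(x,b,h)=(gx,h\ppr b,h\ppr h\thh_x(g)\iv)$, with projections $\frac{\wp_X}{\prg}$, $\frac{\wp_B}{\prh}$, and 2-cell $\kappa_{(x,b,h)}=h$.

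Next I would construct a comparison 1-cell $\Phi=\frac{\psi}{\wt{\thh}}\co\frac{\XY B}{G}\to\frac{F}{G\times H}$ by $\psi(x,b)=(x,b,e)$ and $\wt{\thh}_{(x,b)}(g)=(g,\thh_x(g))$. The two 1-cell axioms of Definition \ref{DefS}(1) reduce to the corresponding axioms for $\thh$; in particular, the cocycle identity for $\wt{\thh}$ is a direct packaging of the cocycle identity for $\thh$ inside $G\times H$. By direct substitution, $\frac{\wp_X}{\prg}\ci\Phi=\frac{p_X}{G}$ and $\frac{\wp_B}{\prh}\ci\Phi=\althhd$ hold on the nose, and the whiskered 2-cell $\kappa\ci\Phi$ is trivial since $\kappa_{(x,b,e)}=e$. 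Hence $\Phi$ carries the canonical bipullback square onto our square, and uniqueness of bipullbacks reduces the claim to showing $\Phi$ is an equivalence.

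For the latter I would exhibit an explicit quasi-inverse $\Psi=\frac{\vp}{\prg}\co\frac{F}{G\times H}\to\frac{\XY B}{G}$ with $\vp(x,b,h)=(x,h\iv b)$. This is well defined since $\bfr(h\iv b)=h\iv\bfr(b)=\ax$, and equivariance of $\vp$ along $\prg$ is a short calculation using the twist formula for the $(G\times H)$-action. One then checks that $\Psi\ci\Phi=\id_{\frac{\XY B}{G}}$ strictly, while the family $\lambda_{(x,b,h)}=(e,h)$ defines a 2-cell $\Phi\ci\Psi\tc\id_{\frac{F}{G\times H}}$: its first axiom $\lambda_{(x,b,h)}\cdot(x,h\iv b,e)=(x,b,h)$ is immediate, and the cocycle axiom boils down to the identity $(e,h\ppr h\thh_x(g)\iv)(g,\thh_x(g))(e,h)\iv=(g,h\ppr)$ in $G\times H$.

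The main obstacle is the bookkeeping: the nontrivial twist $h\ppr h\thh_x(g)\iv$ in the $H$-component of the $(G\times H)$-action on $F$ has to be managed carefully when verifying both the equivariance of $\vp$ and the cocycle axiom for $\lambda$, and at this step the cocycle identity for $\thh$ is used essentially. Once these checks are complete, uniqueness of bipullbacks up to equivalence gives the result.
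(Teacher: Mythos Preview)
Your proposal is correct and follows essentially the same route as the paper's proof: both construct the comparison 1-cell $\frac{\XY B}{G}\to\frac{F}{G\times H}$ sending $(x,b)\mapsto(x,b,e)$ with acting part $g\mapsto(g,\thh_x(g))$, exhibit the explicit quasi-inverse $(x,b,h)\mapsto(x,h\iv b)$ with acting part $\prg$, and use the 2-cell $(e,h)$ to witness the remaining composite as the identity; the compatibility $\kappa\ci\Phi=\id$ and the two projection identities are checked in the same way. Your write-up is slightly more explicit about the cocycle verification for the 2-cell, but the argument is otherwise the paper's own.
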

\begin{proof}
Remark that we have a bipullback
\[
\xy
(-10,7)*+{\xg}="0";
(10,7)*+{\frac{F}{G\ti H}}="2";
(-10,-7)*+{\yh}="4";
(10,-7)*+{\frac{B}{H}}="6";
{\ar_{\frac{\wp_X}{\pro^{(G)}}} "2";"0"};
{\ar_{\althh} "0";"4"};
{\ar^{\frac{\wp_B}{\pro^{(H)}}} "2";"6"};
{\ar^{\frac{\bfr}{H}} "6";"4"};
{\ar@{<=}_{\kappa} (2.5,-2);(-2.5,2)};
\endxy
\]
as in Proposition \ref{Prop2Pullback}.
Define 1-cells
\[ \frac{m}{\tau}\co\frac{\XY B}{G}\to\frac{F}{G\ti H}\qquad \text{and}\qquad \frac{\ell}{\pro^{(G)}}\co \frac{F}{G\ti H}\to\frac{\XY B}{G} \]
by
\begin{eqnarray*}
&m(x,b)=(x,b,e)&(\fa (x,b)\in\XY B),\\
&\ell(x,b,\eta)=(x,\eta\iv b)&(\fa (x,b,\eta)\in F),\\
&\tau_{(x,b)}(g)=(g,\thh_x(g))&(\fa (x,b)\in\XY B, g\in G),
\end{eqnarray*}
and define a 2-cell $\ep\co\frac{m}{\tau}\ci\frac{\ell}{\pro^{(G)}}\tc\id$ by
\[ \ep_{(x,b,\eta)}=(e,\eta)\qquad(\fa (x,b,\eta)\in F). \]
Then the diagram
\[
\xy
(-45,0)*+{\frac{\XY B}{G}}="2";
(-15,0)*+{\frac{F}{G\ti H}}="4";
(15,0)*+{\frac{\XY B}{G}}="6";
(45,0)*+{\frac{F}{G\ti H}}="8";
{\ar_{\frac{m}{\tau}} "2";"4"};
{\ar_{\frac{\ell}{\pro^{(G)}}} "4";"6"};
{\ar^{\frac{m}{\tau}} "6";"8"};
{\ar@/^1.8pc/^{\id} "2";"6"};
{\ar@/_1.8pc/_{\id} "4";"8"};
{\ar@{=>}^{\ep} (-15,3);(-15,6)};
{\ar@{}|{\circlearrowright} (15,-4);(15,-7)};
\endxy
\]
shows that $\frac{m}{\tau}$ is an equivalence. Thus the diagram
\[
\xy
(26,18)*+{\frac{\XY B}{G}}="-2";
(-10,7)*+{\xg}="0";
(12,7)*+{\frac{F}{G\ti H}}="2";
(-10,-8)*+{\yh}="4";
(12,-8)*+{\frac{B}{H}}="6";
{\ar^{\simeq}_{\frac{m}{\tau}} "-2";"2"};
{\ar@/_1.18pc/_{\frac{p_X}{G}} "-2";"0"};
{\ar@/^1.24pc/^{\althhd} "-2";"6"};
{\ar^(0.56){\frac{\wp_X}{\pro^{(G)}}} "2";"0"};
{\ar_{\althh} "0";"4"};
{\ar^{\frac{\wp_B}{\pro^{(H)}}} "2";"6"};
{\ar^{\frac{\bfr}{H}} "6";"4"};
{\ar@{<=}^{\kappa} (3,-2.5);(-1,0.5)};
{\ar@{}^{_{\circlearrowright}} (12,3);(26,3)};
{\ar@{}_{_{\circlearrowright}} (2,9);(10,20)};
\endxy
\]
and the equality $\kappa\ci\frac{m}{\tau}=\id$ shows that $(\ref{PBEquiv})$ is a bipullback.
\end{proof}

The following immediately follows from the universal property of bipullbacks.
\begin{cor}\label{CorAst}
For any 1-cell $\oc$ in $\Sbb$, put $\Bbbb(\althh)=\ata$.
\begin{enumerate}
\item If $\ep\co\althh\tc\althhp$ is a 2-cell in $\Sbb$, then there is a natural transformation
\[ \Bbbb(\ep)\co \Bbbb(\althh)\tc \Bbbb(\althhp), \]
which is in fact an isomorphism because of the invertibility of $\ep$. Explicitly, this is given by
\[ \Bbbb(\ep)_{\Bb}\co\{(x,b)\mid\al(x)=\bfr(b)\}\to\{(x,b)\mid\al\ppr(x)=\bfr(b)\}\ ; \ (x,b)\mapsto (x,\ep_x\iv b) \]
for each $\Bb\in\Ob(\Hs/Y)$.
\item If $\xg\ov{\althh}{\lra}\yh\ov{\bet}{\lra}\zk$ is a sequence of 1-cells in $\Sbb$, then there is a naturally defined isomorphism
\[ \Bbbb(\bet\ci\althh)\cong\Bbbb(\althh)\ci \Bbbb(\bet). \]
\item For any 0-cell $\xg$ in $\Sbb$, there is a natural isomorphism
\[ \Bbbb(\id_{\xg})\cong\Id_{\Bbbb(\xg)}. \]
\end{enumerate}
These are given in a natural way, resulting in a bifunctor $\Bbbb\co\Sbb^{\op}\to\mathrm{CAT}$.
\end{cor}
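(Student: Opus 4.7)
The plan is to leverage Proposition \ref{PropAst}, which identifies $\ata\Bb$ as a bipullback, and then appeal to clauses (i) and (ii) of Definition \ref{Def2Pullback} to produce the required comparison data. All three assertions are instances of the 2-functoriality of bipullbacks in $\Sbb$.

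For (1), fix $\Bb=\Bbr\in\Ob(\Hs/Y)$ and observe that by Proposition \ref{PropAst}, both $\ata\Bb$ and $(\althhp)\uas\Bb$ are presented as bipullbacks along the two parallel 1-cells $\althh$ and $\althhp$ over the same cospan ending at $\frac{B}{H}$. Pasting the structural bipullback 2-cell for $\ata\Bb$ with $\ep\ci\frac{p_X}{G}$ produces a cone whose vertex is $\ata\Bb$ but whose left leg is $\althhp$. Clause (i) of Definition \ref{Def2Pullback} then yields a 1-cell $\Bbbb(\ep)_{\Bb}\co\ata\Bb\to(\althhp)\uas\Bb$, determined uniquely up to invertible 2-cell by clause (ii). The same construction applied to the inverse 2-cell of $\ep$ furnishes a two-sided inverse, so $\Bbbb(\ep)_{\Bb}$ is an equivalence; tracing through the explicit model of the bipullback in Proposition \ref{Prop2Pullback} refines this to an honest isomorphism of $G$-sets. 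Naturality in $\Bb$ is again a matter of clause (ii): both composites of the would-be naturality square factor the same cone through $(\althhp)\uas\Bb$ and hence must agree. The concrete formula $(x,b)\mapsto(x,\ep_x\iv b)$ is then read off by chasing the universal construction through the explicit model, the 2-cell axioms of Definition \ref{DefS} ensuring the assignment is well-defined and $G$-equivariant.

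For (2), I would invoke the pasting lemma for bipullbacks, which is an immediate consequence of the universal property: given composable 1-cells $\xg\ov{\althh}{\lra}\yh\ov{\bet}{\lra}\zk$ and an object $C\to Z$ in $\Ks/Z$, the successive application of $\bet\uas$ and then $\ata$ constructs two stacked bipullback squares whose outer rectangle is itself a bipullback along $\bet\ci\althh$. Essential uniqueness of bipullbacks provides a canonical equivalence with the bipullback presentation of $(\bet\ci\althh)\uas(C\to Z)$ from Proposition \ref{PropAst}, and this refines to a natural isomorphism $\Bbbb(\bet\ci\althh)\cong \Bbbb(\althh)\ci\Bbbb(\bet)$ via the explicit models. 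For (3), the bipullback along $\id_{\xg}$ is trivially given by the original slice object itself, so $\Bbbb(\id_{\xg})\cong\Id_{\Bbbb(\xg)}$ is immediate.

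The main obstacle will not lie in producing any individual comparison --- all three are forced by the universal property --- but in verifying the coherence identities (the associator pentagon attached to iterated compositions, the unitor triangles relating (2) and (3), and naturality of $\Bbbb(\ep)$ with respect to both vertical and horizontal composition of 2-cells) needed to make $\Bbbb$ a bona fide bifunctor. Each such identity pits two canonically constructed 2-cells against one another, and the uniqueness clause (ii) of Definition \ref{Def2Pullback} compels them to coincide. The only step that demands direct calculation, rather than an appeal to universality, is the extraction of the explicit formula in (1).
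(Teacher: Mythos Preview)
Your proposal is correct and follows the same approach as the paper: the paper gives no argument beyond the sentence ``The following immediately follows from the universal property of bipullbacks,'' and your plan is precisely a fleshed-out version of that claim, invoking Proposition~\ref{PropAst} together with clauses (i) and (ii) of Definition~\ref{Def2Pullback}. Your additional remarks on coherence go beyond what the paper spells out, but they are in the same spirit and add no new ingredients.
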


\section{Left adjoint of $\ata$}

We construct a left adjoint of $\ata$, for any 1-cell $\althh$ in $\Sbb$.
\begin{rem}
If $\frac{\al}{G}\co\xg\to\yg$ is a $G$-equivariant 1-cell, then a left adjoint of $(\frac{\al}{G})\uas=\al\uas\co\Gs/Y\to\Gs/X$ is given by the composition functor
\[ \al_+=\al\ci-\co\Gs/X\to\Gs/Y. \]
\end{rem}

\begin{dfn}\label{DefSS}
Let $\oc$ be any 1-cell. Define a functor $S_{\althh}\co\Gs/X\to\Hs/\SIm(\althh)$ as follows.

From any morphism $f\co\Aa\to\Aap$ in $\Gs/X$, we obtain a diagram in $\Sbb$
\[
\xy
(-22,20)*+{\frac{A}{G}}="0";
(-22,-20)*+{\frac{A\ppr}{G}}="2";
(-2,8)*+{\frac{\SIm(\althh\ci\frac{\afr}{G})}{H}}="4";
(4,18)*+{}="5";
(-2,-8)*+{\frac{\SIm(\althh\ci\frac{\afr\ppr}{G})}{H}}="6";
(4,-18)*+{}="7";
(26,0)*+{\frac{\SIm(\althh)}{H}}="8";
(-8,0)*+{}="9";
(48,0)*+{\yh}="10";
(5.7,20)*+{}="11";
(5.7,-20)*+{}="12";
(28,12)*+{}="21";
(28,-12)*+{}="22";
{\ar^{\frac{\ups_{(\al\ci\afr)}}{\thh\ppr}} "0";"4"};
{\ar_{\frac{\ups_{(\al\ci\afr\ppr)}}{\thh\pprr}} "2";"6"};
{\ar_{\frac{f}{G}} "0";"2"};
{\ar^{\frac{\ovl{\afr}}{H}} "4";"8"};
{\ar_{\frac{\ovl{\afr}\ppr}{H}} "6";"8"};
{\ar_{\frac{\ovl{f}}{H}} "4";"6"};
{\ar^{\frac{\wt{\al}}{H}} "8";"10"};
{\ar@/^1.8pc/^{\ulthh\ci\frac{\afr}{G}} "0";"8"};
{\ar@/_1.8pc/_{\ulthh\ci\frac{\afr\ppr}{G}} "2";"8"};
{\ar@/^1.8pc/|!{"8";"11"}\hole^{\frac{\wt{(\al\ci\afr)}}{H}} "4";"10"};
{\ar@/_1.8pc/|!{"8";"12"}\hole_{\frac{\wt{(\al\ci\afr\ppr)}}{H}} "6";"10"};
{\ar@{}|\circlearrowright (-17,0);(-13,0)};
{\ar@{}|\circlearrowright "4";"5"};
{\ar@{}|\circlearrowright "6";"7"};
{\ar@{}|\circlearrowright "8";"9"};
{\ar@{}|\circlearrowright "8";"21"};
{\ar@{}|\circlearrowright "8";"22"};
\endxy
\]
where
\[ \althh\ci\frac{\afr}{G}=\frac{\wt{(\al\ci\afr)}}{H}\ci\frac{\ups_{(\al\ci\afr)}}{\thh\ppr}\quad\text{and}\quad \althh\ci\frac{\afr\ppr}{G}=\frac{\wt{(\al\ci\afr\ppr)}}{H}\ci\frac{\ups_{(\al\ci\afr\ppr)}}{\thh\pprr} \]
are $\SIm$-factorizations, and $\ovl{\afr},\ovl{\afr}\ppr,\ovl{f}$ are given by
\begin{eqnarray*}
&\ovl{\afr}\co\HG A\to\HG X\ ;\ [\eta,a]\mapsto [\eta,\afr(a)],&\\
&\ovl{\afr}\ppr\co\HG A\ppr\to\HG X\ ;\ [\eta,a\ppr]\mapsto [\eta,\afr\ppr(a\ppr)],&\\
&\ovl{f}\co\HG A\to\HG A\ppr\ ;\ [\eta,a]\mapsto [\eta,f(a)].&
\end{eqnarray*}
Using this, we define $S_{\althh}$ by
\[ S_{\althh}\Aa=(\SIm(\althh\ci\frac{\afr}{G})\ov{\ovl{\afr}}{\lra}\SIm(\althh)),\qquad S_{\althh}(f)=\ovl{f} \]
for any object $\Aa$ and morphism $f$ in $\Gs/X$.
\end{dfn}

\begin{dfn}\label{DefPlus}
For any 1-cell $\oc$, let $\althh=\frac{\wt{\al}}{H}\ci\ulthh$ be its $\SIm$-factorization, and define a functor $\atp\co\Gs/X\to\Hs/Y$ to be the composition of
\[ \Gs/X\ov{S_{\althh}}{\lra}\Hs/\SIm(\althh)\ov{\wt{\al}_+}{\lra}\Hs/Y. \]
\end{dfn}

\begin{rem}\label{RemPlus}
The following holds.
\begin{enumerate}
\item The functor $S_{\althh}$ depends only on the acting part $\thh$. In particular, for any 1-cell $\althh$, we have $S_{\althh}=S_{\ulthh}\cong(\ulthh)_+$.
\item If $\althh$ is stab-surjective, then $\wt{\al}$ is an isomorphism, and thus $\wt{\al}_+$ is an isomorphism of categories.
\item For an equivariant 1-cell $\frac{\al}{G}$, we have $(\frac{\al}{G})_+\cong\wt{\al}_+\cong\al_+$.
\end{enumerate}
By {\rm (1)}, we denote $S_{\althh}$ simply by $S_{\thh}$, when there is no confusion.
\end{rem}

\begin{prop}\label{PropLeftAdj}
For any 1-cell $\oc$, functor $\atp$ is left adjoint to $\ata$.
\end{prop}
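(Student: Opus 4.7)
The plan is to exhibit, for each $\Aa = (A\ov{\afr}{\to}X)\in\Gs/X$ and $\Bb = (B\ov{\bfr}{\to}Y)\in\Hs/Y$, a natural bijection
\[
\mathrm{Hom}_{\Hs/Y}(\atp\Aa, \Bb)\ \cong\ \mathrm{Hom}_{\Gs/X}(\Aa, \ata\Bb).
\]
First, I unpack both sides. By Definition \ref{DefPlus} combined with Definition \ref{DefSIm}, $\atp\Aa$ has underlying $H$-set $\SIm(\althh\ci\frac{\afr}{G}) = (H\times A)/\!\sim$, where $(\eta, a)\sim(\eta\thh_{\afr(a)}(g), ga)$ for all $g\in G$, equipped with the $H$-action $h[\eta, a] = [h\eta, a]$ and the structure map $[\eta, a]\mapsto\eta\al(\afr(a))\in Y$. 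By Definition \ref{DefAst}, $\ata\Bb$ has underlying $G$-set $\XY B = \{(x, b)\mid\al(x) = \bfr(b)\}$ with $G$-action $g(x, b) = (gx, \thh_x(g)b)$ and projection $(x,b)\mapsto x$.

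Next, I construct mutually inverse maps
\[
\Phi\co f\longmapsto\tilde f,\ \tilde f(a) = (\afr(a), f([e, a])),\qquad \Psi\co \tilde f\longmapsto f,\ f([\eta, a]) = \eta\,\phi(a),
\]
where $\phi(a)\in B$ denotes the second coordinate of $\tilde f(a)$. The identity $[e, ga] = [\thh_{\afr(a)}(g), a]$ in $\SIm(\althh\ci\frac{\afr}{G})$, combined with the $H$-equivariance of $f$, yields the $G$-equivariance of $\Phi(f)$; conversely the relations $\phi(ga) = \thh_{\afr(a)}(g)\phi(a)$ and $\bfr\ci\phi = \al\ci\afr$, forced by $G$-equivariance of $\tilde f$ together with the definition of $\XY B$, ensure that $\Psi(\tilde f)$ is well-defined on the quotient, is $H$-equivariant, and lies over $Y$. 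The equations $\Phi\ci\Psi = \id$ and $\Psi\ci\Phi = \id$ are immediate from the formulas, and naturality in $\Aa$ and $\Bb$ follows from the explicit descriptions of $S_{\althh}$ and $\ata$ on morphisms given in Definitions \ref{DefSS} and \ref{DefAst}.

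The main obstacle is purely notational: one must keep careful track of how the acting part $\thh_{\afr(-)}$ simultaneously twists the equivalence relation on $H\times A$ and the $G$-action on $\XY B$, and confirm that this twist is exactly the mechanism that converts $H$-equivariance on one side into $G$-equivariance on the other. As a more structural alternative, one may use the $\SIm$-factorization $\althh = \frac{\wt{\al}}{H}\ci\ulthh$ of Proposition \ref{PropSIm} together with Corollary \ref{CorAst}(2) and Remark \ref{RemPlus} to reduce the claim to the equivariant case, where the adjunction becomes the classical slice adjunction $\wt{\al}\ci-\lt\wt{\al}\uas$ from Remark \ref{RemAst}, and to the stab-surjective case, in which $\wt{\al}$ is an isomorphism and the bijection above simplifies accordingly; composing the two resulting adjunctions then yields $\atp\lt\ata$.
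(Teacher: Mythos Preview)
Your proof is correct and follows essentially the same route as the paper: the paper constructs exactly the same pair of mutually inverse maps, sending $\psi$ to $a\mapsto(\afr(a),\psi([e,a]))$ and $\vp$ to $[\eta,a]\mapsto\eta\,p_B(\vp(a))$, and verifies $G$-equivariance, well-definedness, $H$-equivariance, and the two inverse identities just as you outline. Your ``structural alternative'' via the $\SIm$-factorization is not used here in the paper, though an analogous reduction is employed later for the right adjoint in Proposition~\ref{PropRightAdj}.
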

\begin{proof}
%
It suffices to construct a natural bijection
\[ \Phi\co\Hs/Y(\SIm(\althh\ci\frac{\afr}{G})\ov{\wt{(\al\ci\afr)}}{\lra}Y,B\ov{\bfr}{\lra}Y)\ov{\cong}{\longleftrightarrow}\Gs/X(A\ov{\afr}{\lra}X,\XY B\ov{p_X}{\lra}X)\co\Psi \]
for any $\Aa\in\Ob(\Gs/X)$ and $\Bb\in\Ob(\Hs/Y)$.

\bigskip

\noindent\underline{Construction of $\Phi$}

Let $\psi\in\Hs/Y(\SIm(\althh\ci\frac{\afr}{G})\ov{\wt{(\al\ci\afr)}}{\lra}Y,B\ov{\bfr}{\lra}Y)$ be any morphism. Define $\Phi(\psi)$ by
\[ \Phi(\psi)(a)=(\afr(a),\psi([e,a]))\qquad(\fa a\in A). \]
Then $G$-equivariance of $\Phi(\psi)$ follows from
\begin{eqnarray*}
\Phi(\psi)(ga)&=&(\afr(ga),\psi([e,ga]))\\
&=&(g\afr(a),\psi(\thh_{\afr(a)}(g)[e,a]))\\
&=&g\,\Phi(\psi)(a)\qquad(\fa g\in G, a\in A).
\end{eqnarray*}
The equality $p_X\ci\Phi(\psi)=\afr$ is trivially satisfied.


\bigskip

\noindent\underline{Construction of $\Psi$}

Let $\vp\in\Gs/X(A\ov{\afr}{\lra}X,\XY B\ov{p_X}{\lra}X)$ be any morphism. Define $\Psi(\vp)$ by
\[ \Psi(\vp)([\eta,a])=\eta\, p_B\ci\vp(a)\qquad(\fa [\eta,a]\in\SIm(\althh\ci\frac{\afr}{G})), \]
where $p_B\co\XY B\to B$ is the projection($=\al\di$). For any $g\in G$, we have
\[ \Psi(\vp)([\eta\thh_{\afr(a)}(g)\iv,ga])=\eta\thh_{\afr(a)}(g)\iv p_B\ci\vp(ga)=\eta\, p_B\ci\vp(a)=\Psi(\vp)([\eta,a]), \]
which shows the well-definedness of $\Psi(\vp)$.

$H$-equivariance of $\Psi(\vp)$ is obvious. The equality $\bfr\ci\Psi(\vp)=\wt{(\al\ci\afr)}$ follows from
\begin{eqnarray*}
(\bfr\ci\Psi(\vp))([\eta,a])&=&\bfr(\eta\, p_B\ci\vp(a))\ =\ \eta\,\bfr(p_B\ci\vp(a))\\
&=&\eta\, \al(p_X\ci\vp(a))\ =\ \eta\, \al(\afr(a))\\
&=& \wt{(\al\ci\afr)}([\eta,a])\qquad(\fa [\eta,a]\in \SIm ({\althh\ci\frac{\afr}{G}})).
\end{eqnarray*}


\bigskip

\noindent\underline{Confirmation of $\Psi\ci\Phi=\id$}

For any $\psi\in\Hs/Y(\SIm(\althh\ci\frac{\afr}{G})\ov{\wt{(\al\ci\afr)}}{\lra}Y,B\ov{\bfr}{\lra}Y)$, we have
\begin{eqnarray*}
\Psi(\Phi(\psi))([\eta,a])=\eta\,(p_B\ci\Phi(\psi))(a)=\eta\,\psi([e,a])=\psi([\eta,a])
\end{eqnarray*}
for any $[\eta,a]\in\SIm(\althh\ci\frac{\afr}{G})$, and thus $\Psi(\Phi(\psi))=\psi$.


\bigskip

\noindent\underline{Confirmation of $\Phi\ci\Psi=\id$}

For any $\vp\in\Gs/X(A\ov{\afr}{\lra}X,\XY B\ov{p_X}{\lra}X)$, we have
\begin{eqnarray*}
\Phi(\Psi(\vp))(a)=(\afr(a),\Psi(\vp)([e,a]))=(\afr(a),p_B\ci\vp(a))=\vp(a)
\end{eqnarray*}
for any $a\in A$, and thus $\Phi(\Psi(\vp))=\vp$.
\end{proof}

\begin{cor}\label{CorFull}
If $\oc$ is stab-surjective, then the functor $\ata\co\Hs/Y\to\Gs/X$ is fully faithful. Equivalently, the counit $\Lambda\co\atp\ata\tc\Id$ associated to $\atp\lt\ata$ is an isomorphism.
\end{cor}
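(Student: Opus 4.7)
The plan is to verify that the counit $\Lam\co\atp\ata\tc\Id$ of the adjunction $\atp\lt\ata$ from Proposition \ref{PropLeftAdj} is a natural isomorphism; by standard adjunction theory this is equivalent to $\ata$ being fully faithful. Fix $\Bb=\Bbr\in\Ob(\Hs/Y)$. By definition $\ata\Bb=(\XY B\ov{p_X}{\lra}X)$, and hence
\[ \atp\ata\Bb=\wt{\al}_+\ci S_{\althh}(\ata\Bb)=\bigl(\SIm(\althh\ci\frac{p_X}{G})\ov{\wt{\al}\ci\ovl{p_X}}{\lra}Y\bigr). \]
The component $\Lam_{\Bb}$ is obtained as $\Psi(\id_{\ata\Bb})$ with $\Psi$ as in the proof of Proposition \ref{PropLeftAdj}, yielding the explicit formula $\Lam_{\Bb}([\eta,(x,b)])=\eta b$.

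The remaining task is to show this map is bijective, and for this we recognize it as the canonical isomorphism supplied by the uniqueness of $\SIm$-factorizations (Remark \ref{PropSImFactorSystem}). From Proposition \ref{PropAst} we have the strict equality $\althh\ci\frac{p_X}{G}=\frac{\bfr}{H}\ci\althhd$. The key step is to verify that $\althhd$ is itself stab-surjective whenever $\althh$ is: for condition (i), given $b\in B$ we use $Y=H\al(X)$ to write $\bfr(b)=h\al(x)$, so $(x,h\iv b)\in\XY B$ and $b=h\,\al\di(x,h\iv b)\in H\al\di(\XY B)$; for condition (ii), the assumption $hb=h\ppr b\ppr$ implies $h\al(x)=h\ppr\al(x\ppr)$, so stab-surjectivity of $\althh$ yields $g\in G$ with $x\ppr=gx$ and $h=h\ppr\thh_x(g)$, and then $hb=h\ppr b\ppr$ forces $b\ppr=\thh_x(g)b$, giving $(x\ppr,b\ppr)=g(x,b)$.

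Once $\althhd$ is known to be stab-surjective, Remark \ref{PropSImFactorSystem} produces a canonical $H$-equivariant isomorphism $\om\co\SIm(\althh\ci\frac{p_X}{G})\to B$, $[\eta,(x,b)]\mapsto\eta b$, satisfying $\bfr\ci\om=\wt{(\al\ci p_X)}=\wt{\al}\ci\ovl{p_X}$. Comparison with the explicit formula above shows $\Lam_{\Bb}=\om$, so $\Lam_{\Bb}$ is an isomorphism; naturality in $\Bb$ is automatic from the adjunction. The main (but routine) obstacle is the stab-surjectivity check for $\althhd$, after which the uniqueness of $\SIm$-factorizations immediately identifies the counit with $\om$ and concludes the proof.
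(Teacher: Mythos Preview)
Your proof is correct and follows essentially the same line as the paper's: identify $\Lambda_{\Bb}=\Psi(\id_{\ata\Bb})$ with the comparison isomorphism $\om$ of Remark~\ref{PropSImFactorSystem}, once $\althhd$ is known to be stab-surjective. The only difference is that the paper obtains the stab-surjectivity of $\althhd$ by citing Proposition~\ref{PropStabsurj}~(4) (stability under bipullbacks, applied to the bipullback~(\ref{PBEquiv})), whereas you verify it by hand; your direct check is precisely the content of that proposition specialized to this diagram, so the two arguments coincide in substance.
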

\begin{proof}
By Proposition \ref{PropAst}, for any $\Bb\in\Ob(\Gs/Y)$, the diagram $(\ref{PBEquiv})$ is a bipullback. As in Proposition \ref{PropStabsurj}, stab-surjectivity of $\althh$ implies that $\althhd$ is also stab-surjective.
Thus diagram $(\ref{PBEquiv})$ gives a factorization of $\althh\ci\frac{p_X}{G}$ into stab-surjective $\althhd$ and equivariant $\frac{\bfr}{H}$. By Remark \ref{PropSImFactorSystem}, we have an $H$-isomorphism
\[ \om\co\SIm(\althh\ci\frac{p_X}{G})\ov{\cong}{\lra}B\ ;\ [\eta,(x,b)]\mapsto\eta\al\di(x,b)=\eta b, \]
compatible with $\wt{(\al\ci p_X)}$ and $\bfr$. The construction of $\Psi$ given in the proof of Proposition \ref{PropLeftAdj} shows
\[ \om=\Psi(\id_{\ata\Bb})=\Lambda_{\Bb}. \]
Thus $\Lambda$ is isomorphic at any $\Bb\in\Ob(\Hs/Y)$.
\end{proof}


\section{Right adjoint of $\ata$}

We construct a right adjoint of $\ata$, for any 1-cell $\althh$ in $\Sbb$.
\begin{rem}
If $\frac{\al}{G}\co\xg\to\yg$ is a $G$-equivariant 1-cell, then a right adjoint functor of $(\frac{\al}{G})\uas=\al\uas$ is given by the functor
\[ \al\bu=\Pi_{\al}\co\Gs/X\to\Gs/Y \]
defined as follows (\cite[section 1]{Tambara}).
\begin{enumerate}
\item For an object $\Aa\in\Ob(\Gs/X)$, define $\Pi_{\al}\Aa=(\Pi_{\al}(A)\ov{\pi_Y}{\lra}Y)$ as follows.
\begin{itemize}
\item[{\rm (i)}] $\Pi_{\al}(A)$ is the set of pairs $(y,\sig)$ of $y\in Y$ and a map $\sig\co\al\iv(y)\to A$ which makes the following diagram commutative.
\[
\xy
(-10,6)*+{\al\iv(y)}="0";
(10,6)*+{A}="2";
(0,-8)*+{X}="4";
(0,10)*+{}="5";
{\ar^(0.54){\sig} "0";"2"};
{\ar@{_(->} "0";"4"};
{\ar^(0.4){\afr} "2";"4"};
{\ar@{}|\circlearrowright "4";"5"};
\endxy
\]
\item[{\rm (ii)}] $G$-action on $\Pi_{\al}(A)$ is given by
\[ g(y,\sig)=(gy,{}^g\sig)\qquad(\fa g\in G, (y,\sig)\in\Pi_{\al}(A)), \]
where ${}^g\sig$ is defined by
\[ {}^g\sig\co\al\iv(gy)\to A\ ;\ x\mapsto g\sig(g\iv x).  \]
\item[{\rm (iii)}] $\pi_Y\co\Pi_{\al}(A)\to Y$ is the projection onto $Y$, which is obviously $G$-equivariant.
\end{itemize}
\item For a morphism $f\co\Aa\to\Aap$ in $\Gs/X$, define $\Pi_{\al}(f)\co\Pi_{\al}\Aa\to\Pi_{\al}\Aap$ by
\[ \Pi_{\al}(f)\co\Pi_{\al}(A)\to\Pi_{\al}(A\ppr)\ ;\ (y,\sig)\mapsto(y,f\ci\sig). \]
\end{enumerate}
\end{rem}


\begin{dfn}\label{DefEqual}
Let $\oc$ be a 1-cell in $\Sbb$, and let $\Aa=\Aar$ be an object in $\Gs/X$. At a point $a\in A$, we consider the following equivalence relation $\un{(\althh,\afr)}{\eq}$ on $G$.
\begin{itemize}
\item[-] Two elements $g,g\ppr\in G$ are said to satisfy $g\un{(\althh,\afr)}{\eq}g\ppr$ at $a$, if they satisfy $\thh_{\afr(a)}(g)=\thh_{\afr(a)}(g\ppr)$ and $g\afr(a)=g\ppr\afr(a)$.
\end{itemize}
\end{dfn}

\begin{rem}\label{RemEqual}
Let $\althh,\Aa$ and $a\in A$ be as above. For any $g,g\ppr\in G$, the following holds.
\begin{enumerate}
\item This equivalence relation depends only on the acting part $\thh$. In particular we have
\[ g\un{(\althh,\afr)}{\eq}g\ppr\ \text{at}\ a\quad\LR\quad g\un{(\ulthh,\afr)}{\eq}g\ppr\ \text{at}\ a. \]
\item For any $g_0\in G$, we have
\[ g\un{(\althh,\afr)}{\eq}g\ppr\ \text{at}\ g_0a\quad\LR\quad gg_0\un{(\althh,\afr)}{\eq}g\ppr g_0\ \text{at}\ a. \]
\item For an equivariant 1-cell $\frac{\al}{G}$,
\[ g\un{(\frac{\al}{G},\afr)}{\eq}g\ppr\ \text{at}\ a\quad\LR\quad g=g\ppr \]
holds for any $a\in A$.
\end{enumerate}
By {\rm (1)}, we denote $\un{(\althh,\afr)}{\eq}$ simply by $\un{(\thh,\afr)}{\eq}$ when there is no confusion.

\end{rem}


\begin{dfn}\label{DefFix}
Let $\oc$ be any 1-cell in $\Sbb$. Define an endofunctor
\[ (-)^{\althh}\co\Gs/X\to\Gs/X \]
as follows.
\begin{enumerate}
\item For an object $\Aa$ in $\Gs/X$, define $\Aa\ual=(A\ual\ov{\afr\ci\iota}{\lra}X)\in\Ob(\Gs/X)$ by the following.
\begin{itemize}
\item[{\rm (i)}] $A\ual\se A$ is the subset, consisting of $a\in A$ satisfying
\[ g\un{(\thh,\afr)}{\eq}g\ppr\ \text{at}\ a\quad \Longrightarrow\quad ga=g\ppr a \]
for any $g,g\ppr\in G$. By Remark \ref{RemEqual} {\rm (2)}, this becomes a $G$-subset of $A$.
\item[{\rm (ii)}] $\iota\co A\ual\hookrightarrow A$ denotes the inclusion.
\end{itemize}
\item For a morphism $f\co\Aa\to\Aap$ in $\Gs/X$, since we have $f(A\ual)\se A^{\prime\althh}$, we define $f\ual\co\Aa\ual\to\Aap\ual$ by $f\ual=f|_{A\ual}$.
\end{enumerate}
\end{dfn}

\begin{rem}\label{RemFix}
Let $\oc$ be any 1-cell in $\Sbb$. For any $\Aa\in\Ob(\Gs/X)$, applying $(-)\ual$ to the unique morphism
\[ \afr\co\Aa\to(X,\id_X), \]
we obtain the following commutative diagram
\[
\xy
(-10,6)*+{A\ual}="0";
(10,6)*+{X\ual}="2";
(17.3,5.8)*+{=X}="6";
(0,-8)*+{X}="4";
(0,10)*+{}="5";
{\ar^{\afr\ual} "0";"2"};
{\ar_(0.4){\afr\ci\iota} "0";"4"};
{\ar^(0.4){\id_X} "2";"4"};
{\ar@{}|\circlearrowright "4";"5"};
\endxy
\]
which means $\afr\ci\iota=\afr\ual$. For this reason, we abbreviately write $\Aa\ual=(A\ual,\afr\ual)$ in the rest. Moreover, since the functor $(-)\ual$ depends only on the acting part $\thh$ by Remark \ref{RemEqual}, we denote it simply by $(-)\uth$ if there is no confusion. Thus we often write as $\Aa\uth=(A\uth,\afr\uth)$ for an object $\Aa\in\Ob(\Gs/X)$. 
For any 1-cell $\althh$, the functor $(-)\uth$ is idempotent. Namely, we have $(-)\uth\ci(-)\uth=(-)\uth$.
\end{rem}


\begin{dfn}\label{DefBullet}
For any 1-cell $\oc$, take its $\SIm$-factorization $\althh=\frac{\wt{\al}}{H}\ci\ulthh$, and define a functor $\atb\co\Gs/X\to\Hs/Y$ to be the composition of the following sequence of functors.
\[ \Gs/X\ov{(-)\uth}{\lra}\Gs/X\ov{S_{\thh}}{\lra}\Hs/\SIm(\althh)\ov{\Pi_{\wt{\al}}}{\lra}\Hs/Y \]
\end{dfn}

\begin{rem}\label{RemBullet}
By Remarks \ref{RemPlus} and \ref{RemEqual}, we have $(\frac{\al}{G})\bu\cong\wt{\al}\bu\cong\al\bu$ for an equivariant 1-cell $\frac{\al}{G}$.
\end{rem}


\begin{lem}\label{LemRightAdj1}
Let $\oc$ be any 1-cell. If we denote the essential image of the functor
\[ (-)\uth\co\Gs/X\to\Gs/X \]
(namely, the full subcategory of $\Gs/X$ consisting of objects isomorphic to objects of the form $\Aa\uth$ for some $\Aa\in\Ob(\Gs/X)$) by $\Fcal\se\Gs/X$, then the following holds.
\begin{enumerate}
\item An object $\Aa\in\Ob(\Gs/X)$ belongs to $\Fcal$ if and only if the subset $A\uth\se A$ is equal to $A$ itself.
\item $(-)\uth\co\Gs/X\to\Fcal$ is right adjoint to the inclusion $\Fcal\hookrightarrow\Gs/X$.
\end{enumerate}
\end{lem}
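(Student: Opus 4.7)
The plan is to exploit two structural features of the construction $(-)\uth$: its idempotency, recorded in Remark \ref{RemFix}, and the fact that the relation $\un{(\thh,\afr)}{\eq}$ at $a\in A$ depends on $a$ only through $\afr(a)\in X$. Once these are placed in evidence, both parts of the lemma reduce to short verifications.

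For part (1), the \lq\lq if\rq\rq{} direction is immediate: if $A\uth=A$, then $\Aa=\Aa\uth$ as objects of $\Gs/X$, hence $\Aa\in\Fcal$. For the converse, fix an isomorphism $\vp\co \Aa\ov{\cong}{\to}\Bb\uth$ over $X$ for some $\Bb\in\Ob(\Gs/X)$, so $\vp$ is a $G$-equivariant bijection with $\afr=\bfr\uth\ci\vp$. Given $a\in A$ and $g,g\ppr\in G$ with $g\un{(\thh,\afr)}{\eq}g\ppr$ at $a$, the condition depends only on $\afr(a)=\bfr(\vp(a))$, so the same relation holds at $\vp(a)$. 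Because $\vp(a)\in B\uth$, this gives $g\vp(a)=g\ppr\vp(a)$; then $\vp(ga)=\vp(g\ppr a)$, and injectivity of $\vp$ forces $ga=g\ppr a$. Thus $a\in A\uth$, proving $A=A\uth$.

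For part (2), I would construct the adjunction bijection directly. Let $\Cc=(C\ov{\cfr}{\to}X)\in\Ob(\Fcal)$, so $C=C\uth$, and let $\Aa\in\Ob(\Gs/X)$. The claim is that every morphism $f\co \Cc\to \Aa$ in $\Gs/X$ factors uniquely through $\iota\co \Aa\uth\hookrightarrow \Aa$. Uniqueness is automatic because $\iota$ is an inclusion, so only the containment $f(C)\subseteq A\uth$ requires argument. Given $c\in C$ and $g,g\ppr\in G$ with $g\un{(\thh,\afr)}{\eq}g\ppr$ at $f(c)$, the relation depends only on $\afr(f(c))=\cfr(c)$, so $g\un{(\thh,\cfr)}{\eq}g\ppr$ at $c$. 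By $C=C\uth$ we obtain $gc=g\ppr c$, and $G$-equivariance of $f$ yields $gf(c)=g\ppr f(c)$, i.e.\ $f(c)\in A\uth$. The assignment $f\mapsto \iota\iv\ci f$ and its inverse $\tilde{f}\mapsto \iota\ci\tilde{f}$ therefore give mutually inverse natural bijections
\[ \Fcal(\Cc,\Aa\uth)\cong\Gs/X(\Cc,\Aa), \]
establishing $(-)\uth\lt$ inclusion.

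The only place where care is needed is the verification in part (1) that the relation $\un{(\thh,-)}{\eq}$ is transported correctly by a morphism over $X$; however, this is essentially tautological once one observes that the relation is determined by the base map to $X$, and the same observation then drives the image-containment argument in part (2). So I expect no genuine obstacle, only the bookkeeping of checking that $G$-equivariance and the condition \lq\lq over $X$\rq\rq{} combine to preserve the defining condition of $A\uth$.
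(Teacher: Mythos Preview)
Your argument is correct and follows the same strategy as the paper, just more explicitly: the paper dispatches (1) by citing idempotency of $(-)^\theta$ and handles (2) via the observation (already built into Definition~\ref{DefFix}(2)) that any morphism over $X$ carries $A^\theta$ into $A'^\theta$, which is exactly what you verify by hand. One minor slip: your closing line ``$(-)^\theta\dashv$ inclusion'' has the adjunction reversed---the bijection $\Gs/X(\Cc,\Aa)\cong\Fcal(\Cc,\Aa^\theta)$ you constructed shows that the inclusion is the \emph{left} adjoint, i.e.\ inclusion $\dashv(-)^\theta$, as the statement asserts.
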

\begin{proof}
{\rm (1)} follows from the idempotency of $(-)\uth$.

{\rm (2)} Let $\Aa\in\Ob(\Gs/X)$ and $\Aap\in\Ob(\Fcal)$ be any pair of objects. A natural bijection
\[ \Gs/X(\Aap,\Aa)\ov{\cong}{\longleftrightarrow}\Fcal(\Aap,\Aa\uth) \]
exists, since we have $f(A\ppr)=f(A^{\prime\thh})\se A\uth$ for any $f\in\Gs/X(\Aap,\Aa)$.
\end{proof}


\begin{lem}\label{LemRightAdj2}
Let $\oc$ be any 1-cell, and let $\Fcal$ be as in Lemma \ref{LemRightAdj1}.
\begin{enumerate}
\item The functor $\ata\co\Hs/Y\to\Gs/X$ factors through $\Fcal$. Namely, we have $\ata\Bb\in\Ob(\Fcal)$ for any $\Bb\in\Ob(\Hs/Y)$.
\item Assume $\althh$ is stab-surjective. Then
\[ \atp|_{\Fcal}\co\Fcal\to\Hs/Y\qquad \text{and}\qquad\ata\co\Hs/Y\to\Fcal \]
are mutually quasi-inverses.
\end{enumerate}
\end{lem}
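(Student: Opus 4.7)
The plan is to handle the two parts separately. For part (1), I will verify directly from the formula $\ata\Bb=(\XY B\ov{p_X}{\to}X)$ with $G$-action $g(x,b)=(gx,\thh_x(g)b)$ that the condition defining $\Fcal$ holds trivially: if $\thh_x(g)=\thh_x(g\ppr)$ and $gx=g\ppr x$, then plainly $g(x,b)=g\ppr(x,b)$. Hence $(\XY B)\uth=\XY B$, and Lemma \ref{LemRightAdj1}(1) places $\ata\Bb$ in $\Fcal$.

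For part (2), the strategy is to combine Corollary \ref{CorFull} (which, under stab-surjectivity, gives that the counit $\Lambda\co\atp\ata\tc\Id$ of the adjunction $\atp\lt\ata$ from Proposition \ref{PropLeftAdj} is an isomorphism) with a direct check that the unit $E\co\Id\tc\ata\atp$ is an isomorphism on objects of $\Fcal$. The former yields $\atp|_{\Fcal}\ci\ata\cong\Id_{\Hs/Y}$ for free; the latter will give $\ata\ci\atp|_{\Fcal}\cong\Id_{\Fcal}$. Specializing the bijection $\Phi$ in the proof of Proposition \ref{PropLeftAdj} to $\psi=\id_{\atp\Aa}$, I expect the unit to take the form
\[ E_{\Aa}\co A\to\XY\SIm(\althh\ci\tfrac{\afr}{G}),\quad a\mapsto(\afr(a),[e,a]), \]
with $G$-equivariance over $X$ automatic from the adjunction.

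The key checks are then bijectivity of $E_{\Aa}$ for $\Aa\in\Fcal$. For surjectivity at $(x,[\eta,a])$ with $\al(x)=\eta\al(\afr(a))$, I will invoke stab-surjectivity of $\althh$ on the equation $e\cdot\al(x)=\eta\cdot\al(\afr(a))$ to produce $g\in G$ with $x=g\afr(a)$ and $\eta=\thh_{\afr(a)}(g)$; then $a\ppr:=ga$ should satisfy $E_{\Aa}(a\ppr)=(x,[\eta,a])$, after identifying $[e,ga]=[\thh_{\afr(a)}(g),a]$ in $\SIm(\althh\ci\frac{\afr}{G})$. For injectivity, $E_{\Aa}(a)=E_{\Aa}(a\ppr)$ will provide $g\in G$ with $a\ppr=ga$, $\thh_{\afr(a)}(g)=e$ and $g\afr(a)=\afr(a)$; these are exactly $g\un{(\thh,\afr)}{\eq}e$ at $a$, and membership of $\Aa$ in $\Fcal$ then forces $a\ppr=ga=a$.

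The main bookkeeping hurdle will be keeping straight the three layers $A$, $\SIm(\althh\ci\frac{\afr}{G})$, and $\XY-$ with their respective $G$- and $H$-actions, and in particular verifying the identification $[e,ga]=[\thh_{\afr(a)}(g),a]$ used in the surjectivity step (which will come down to the cocycle identity from Definition \ref{DefS}(1)(ii)). Once the formula for $E_{\Aa}$ has been extracted and stab-surjectivity invoked, however, the argument is essentially forced: surjectivity is a direct translation of stab-surjectivity into $\SIm$-language, and injectivity is precisely what the condition defining $\Fcal\se\Gs/X$ is there to deliver.
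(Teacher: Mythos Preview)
Your proposal is correct and follows essentially the same approach as the paper: part (1) is the identical direct computation, and for part (2) both you and the paper invoke Corollary \ref{CorFull} for the counit and then show the unit $a\mapsto(\afr(a),[e,a])$ is an isomorphism on $\Fcal$ using stab-surjectivity to produce the preimage $ga$ and the defining condition of $\Fcal$ to control ambiguity. The only cosmetic difference is that the paper writes down the explicit inverse $\varpi(x,[\eta,a])=ga$ and checks its well-definedness, whereas you verify injectivity and surjectivity of the unit directly; the underlying computations coincide.
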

\begin{proof}
{\rm (1)} By definition, we have $\ata\Bb=(\XY B\ov{p_X}{\lra}X)$. Let $(x,b)\in\XY B$ be any point. For any $g,g\ppr\in G$ satisfying $g\un{(\thh,p_X)}{\eq}g\ppr$ at $(x,b)$, the equalities
\[ \thh_x(g)=\thh_x(g\ppr)\quad\text{and}\quad gx=g\ppr x \]
imply
\[ g(x,b)=(gx,\thh_x(g)b)=(g\ppr x,\thh_x(g\ppr)b)=g\ppr(x,b). \]
This means $(x,b)\in(\XY B)\uth$.

{\rm (2)} 
By Corollary \ref{CorFull}, we have $\atp\ata\ov{\cong}{\Longrightarrow}\Id$.
So it remains to show that the unit $\Id\tc\ata\atp$ gives an isomorphism at any $\Aa\in\Ob(\Fcal)$. By definition, we have
\begin{eqnarray*}
\ata\atp\Aa&\cong&\ata\atp(\Aa\uth)\\
&=&\ata(\HG A\uth\ov{\wt{(\al\ci\afr\uth)}}{\lra}Y)%
\ =\ (\XY(\HG A\uth)\ov{p_X}{\lra}X)
\end{eqnarray*}
with
\[ \XY(\HG A\uth)=\{ (x,[\eta,a])\in X\ti(\HG A\uth)\mid \al(x)=\eta\,\al(\afr(a)) \}. \]
By the construction of $\Phi$ in the proof of Proposition \ref{PropLeftAdj}, the unit is given by the $G$-map
\[ \lam\co A\uth\to \XY(\HG A\uth)\ ;\ a\mapsto(\afr(a),[e,a]) \]
at $\Aa\uth$. It suffices to give its inverse map. For any $(x,[\eta,a])\in \XY(\HG A\uth)$, stab-surjectivity of $\althh$ and the condition $\al(x)=\eta\,\al(\afr(a))$ imply the existence of an element $g\in G$ satisfying
\begin{equation}\label{EqS1}
x=g\afr(a)\quad\text{and}\quad\eta=\thh_{\afr(a)}(g).
\end{equation}
If we put $\varpi(x,[\eta,a])=ga$, this gives a well-defined map
\[ \varpi\co\XY(\HG A\uth)\to A\uth. \]
In fact, if $[\eta,a]=[\eta\ppr,a\ppr]$ holds in $\HG A\uth$, then by definition there is $g_0\in G$ satisfying
\begin{equation}\label{EqS2}
\eta=\eta\ppr\thh_{\afr(a)}(g_0)\quad\text{and}\quad a\ppr=g_0a.
\end{equation}
If $g\ppr\in G$ satisfies
\begin{equation}\label{EqS3}
x=g\ppr\afr(a\ppr)\quad\text{and}\quad\eta\ppr=\thh_{\afr(a\ppr)}(g\ppr),
\end{equation}
then $(\ref{EqS1}),(\ref{EqS2}),(\ref{EqS3})$ imply $g\un{(\thh,\afr)}{\eq}g\ppr g_0$ at $a$. Since $a\in A\uth$, this means
\[ ga=g\ppr g_0a=g\ppr a\ppr. \]
It can be easily verified that $\varpi$ is inverse to $\lam$.
\end{proof}


\begin{prop}\label{PropRightAdj}
For any 1-cell $\oc$, functor $\atb$ is right adjoint to $\ata$.
\end{prop}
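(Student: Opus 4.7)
The plan is to establish the natural bijection
$$\Gs/X(\ata\Bb,\Aa)\cong\Hs/Y(\Bb,\atb\Aa)$$
by decomposing $\althh$ along its $\SIm$-factorization $\althh=\frac{\wt{\al}}{H}\ci\ulthh$. Corollary \ref{CorAst} together with Remark \ref{RemAst} gives $\ata\cong(\ulthh)\uas\ci\wt{\al}\uas$, and by its definition $\atb=\Pi_{\wt{\al}}\ci S_{\thh}\ci(-)\uth$. The strategy is therefore to exhibit $\Pi_{\wt{\al}}$ as right adjoint to $\wt{\al}\uas$ and $S_{\thh}\ci(-)\uth$ as right adjoint to $(\ulthh)\uas$, and then compose the two adjunctions.

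The equivariant half is immediate from the Remark opening the section: $\Pi_{\wt{\al}}=\wt{\al}\bu$ is the standard Tambara right adjoint to $\wt{\al}\uas$. For the stab-surjective factor $\ulthh$ the argument combines the preceding lemmas: by Lemma \ref{LemRightAdj2}(1), $(\ulthh)\uas$ takes values in $\Fcal\se\Gs/X$; by Lemma \ref{LemRightAdj1}(2), $(-)\uth\co\Gs/X\to\Fcal$ is right adjoint to the inclusion $\Fcal\hookrightarrow\Gs/X$; and by Lemma \ref{LemRightAdj2}(2), since $\ulthh$ is stab-surjective, $(\ulthh)\uas\co\Hs/\SIm(\althh)\to\Fcal$ is an equivalence with quasi-inverse $(\ulthh)\atp|_{\Fcal}$, which Remark \ref{RemPlus}(1) identifies with $S_{\thh}|_{\Fcal}$. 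Chaining these yields
$$\Gs/X((\ulthh)\uas\Ccal,\Aa)\cong\Fcal((\ulthh)\uas\Ccal,\Aa\uth)\cong\Hs/\SIm(\althh)(\Ccal,S_{\thh}(\Aa\uth))$$
naturally in $\Ccal\in\Hs/\SIm(\althh)$ and $\Aa\in\Gs/X$.

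Setting $\Ccal=\wt{\al}\uas\Bb$ and appending the equivariant adjunction gives
$$\Gs/X(\ata\Bb,\Aa)\cong\Hs/Y(\Bb,\Pi_{\wt{\al}}(S_{\thh}(\Aa\uth)))=\Hs/Y(\Bb,\atb\Aa),$$
as required. The point requiring the most care is the stab-surjective step: one must verify that the reflection adjunction and the equivalence from Lemma \ref{LemRightAdj2}(2) really assemble into a single adjunction $(\ulthh)\uas\lt S_{\thh}\ci(-)\uth$ that is natural in both arguments (not merely object-wise), and that this is compatible with the identification $(\ulthh)\atp\cong S_{\thh}$ so that the composite counit is the one implicit in the definition of $\atb$. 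The equivariant half and the factorization of $\ata$ itself are essentially formal consequences of the earlier material.
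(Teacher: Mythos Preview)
Your proposal is correct and follows essentially the same approach as the paper: decompose via the $\SIm$-factorization, invoke the standard adjunction $\wt{\al}\uas\lt\Pi_{\wt{\al}}$ for the equivariant factor, and handle the stab-surjective factor $\ulthh$ by combining Lemma~\ref{LemRightAdj1}(2) with the equivalence of Lemma~\ref{LemRightAdj2}(2). The paper phrases this as a reduction to the stab-surjective case (replacing $\althh$ by $\ulthh$ after peeling off $\wt{\al}$), whereas you keep both factors in play and compose the two adjunctions directly, but the content is identical.
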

\begin{proof}
Remark that we have natural isomorphisms
\[ \ata\cong(\ulthh)\uas\ci\wt{\al}\uas,\quad \atb\cong\wt{\al}\bu\ci(\ulthh)\bu \]
and the adjointness $\wt{\al}\uas\lt\wt{\al}\bu$. Thus replacing $\ulthh$ by $\althh$, it suffices to show the adjointness $\ata\lt\atb$ for stab-surjective 1-cell $\althh$.

Assume $\althh$ is stab-surjective. In this case, since $\wt{\al}\uas$ is an equivalence of categories, it follows $\wt{\al}_+\cong\wt{\al}\bu$. If we denote the inclusion $\Fcal\hookrightarrow \Gs/X$ by $j$, then by Lemmas \ref{LemRightAdj1} and \ref{LemRightAdj2}, we have the following sequence of adjoint pairs.
\[
\xy
(-28,0)*+{\Hs/Y}="0";
(0,14)*+{}="1";
(0,0)*+{\Fcal}="2";
(28,0)*+{\Gs/X}="4";
{\ar@/^2.8pc/^{\ata} "0";"4"};
{\ar@/^1.2pc/^(0.46){} "0";"2"};
{\ar@/^1.2pc/^{\wt{\al}_+\ci S_{\thh}\cong\wt{\al}\bu\ci S_{\thh}} "2";"0"};
{\ar@/^1.2pc/^(0.46){j} "2";"4"};
{\ar@/^1.2pc/^{(-)\uth} "4";"2"};
{\ar@{}|{\perp} (-14,2);(-14,-2)};
{\ar@{}|{\perp} (14,2);(14,-2)};
{\ar@{}|\circlearrowright "1";"2"};
\endxy
\]
It follows $\ata\lt\wt{\al}\bu\ci S_{\thh}\ci(-)\uth=\atb$.
\end{proof}


\section{Six operations}

So far, we have associated an adjoint triplet
\begin{equation}\label{DiagAdjTriplet}
\xy
(-16,0)*+{\Hs/Y}="0";
(16,0)*+{\Gs/X}="2";
{\ar@/_1.8pc/_{\atp} "2";"0"};
{\ar|*+{_{\ata}} "0";"2"};
{\ar@/^1.8pc/^{\atb} "2";"0"};
{\ar@{}|{\perp} (0,6);(0,3)};
{\ar@{}|{\perp} (0,-3);(0,-6)};
\endxy
\end{equation}
to any 1-cell $\oc$. By the decomposition into orbits and $\SIm$-factorizations, we see any 1-cell in $\Sbb$ can be constructed from the following two typical types of morphisms (up to equivalences), by using unions and compositions.
\begin{enumerate}
\item[{\rm [I]}] For a subgroup $H\le G$, the unique map $p\co G/H\to G/G=\pt$ induces a $G$-equivariant 1-cell
\begin{equation}\label{MorType1}
\frac{p}{G}\co\frac{G/H}{G}\to\ptg.
\end{equation}
If we compose this with the $\Ind$-equivalence $\pth\ov{\simeq}{\lra}\frac{G/H}{G}$, we obtain $\frac{\pt}{\iota}\co\pth\to\ptg$, where $\iota\co H\hookrightarrow G$ is the inclusion.
\item[{\rm [II]}] For a normal subgroup $N\nm G$, the quotient homomorphism $q\co G\to G/N$ induces a stab-surjective 1-cell
\begin{equation}\label{MorType2}
\ptq\co\ptg\to\frac{\pt}{G/N}.
\end{equation}
\end{enumerate}

\begin{ex}\label{Ex5.1}
Let $\althh$ be a 1-cell. For types {\rm [I]} and {\rm [II]}, each of functors $\atp,\ata$ and $\atb$ is isomorphic to the following functors.
\begin{center}
\begin{tabular}
[c]{|c|c|c|}\hline
& type {\rm [I]} & type {\rm [II]}\\\hline
$\atp$ & $\Ind^G_H$ & $\Orb^G_{G/N}$ \\\hline
$\ata$ & $\Res^G_H$ & $\Inf^G_{G/N}$ \\\hline
$\atb$ & $\Jnd^G_H$ & $\Inv^G_{G/N}$ \\\hline
\end{tabular}
\end{center}
Thus $(\ref{DiagAdjTriplet})$ recovers the following six operations (cf. \cite{Yoshida}) for finite groups.
\[
\xy
(-26,0)*+{\Hs}="0";
(0,0)*+{\Gs}="2";
(26,0)*+{{}_{G/N}\sett}="4";
{\ar@{<-}@/_1.6pc/_{\Ind} "2";"0"};
{\ar@{<-}|*+{_{\Res}} "0";"2"};
{\ar@{<-}@/^1.6pc/^{\Jnd} "2";"0"};
{\ar@{<-}@/_1.6pc/_{\Orb} "4";"2"};
{\ar@{<-}|*+{_{\Inf}} "2";"4"};
{\ar@{<-}@/^1.6pc/^{\Inv} "4";"2"};
{\ar@{}|{\perp} (-13,5);(-13,3)};
{\ar@{}|{\perp} (-13,-3);(-13,-5)};
{\ar@{}|{\perp} (13,5);(13,3)};
{\ar@{}|{\perp} (13,-3);(13,-5)};
\endxy
\]
\end{ex}
\begin{proof}
For type {\rm [I]}, this is well-known. (As for $\Jnd$, see for example \cite{Tambara}.) Since it can be also easily verified for $\Inf$, we only show for $\Orb$ and $\Inv$.
Let $\althh=\ptq$ be as in $(\ref{MorType2})$. In this case, we have $\SIm(\ptq)=\pt$, and canonically identify as 
\[ \Gs/\pt=\Gs,\quad  \Hs/\pt=\Hs. \]
We may assume $(\ptq)_+\cong S_{\ptq}$ and $(\ptq)\bu\cong S_{\ptq}\ci(-)^{\ptq}$.

\bigskip
\noindent\underline{Functor $\Orb$.}

By Remark \ref{RemPlus}, the functor $(\ptq)_+$ sends an object $A\in\Ob(\Gs)$ to
\[ \SIm(\frac{A}{G}\ov{\ptq}{\lra}\frac{\pt}{G/N})=(G/N)\un{G}{\ti}A=((G/N)\ti A)/\sim, \]
where $(\xi N,a)$ and $(\xi\ppr N,a\ppr)$ in $(G/N)\ti A$ are equivalent if and only if there exists $g\in G$ satisfying
\[ \xi gN=\xi\ppr N\quad\text{and}\quad a\ppr=ga. \]
Thus we have a $G/N$-isomorphism
\[ A/N\ov{\cong}{\lra}((G/N)\ti A)/\sim\ ;\ \overline{a}\mapsto [e,a] \]
which gives a natural isomorphism $\Orb^G_{G/N}\ov{\cong}{\Longrightarrow}(\ptq)_+$.

\bigskip
\noindent\underline{Functor $\Inv$.}

For any $A\in\Ob(\Gs)$, we have
\begin{eqnarray*}
A^{\ptq}&=&\{ a\in A\mid gN=g\ppr N\ \Longrightarrow\ ga=g\ppr a\quad(\fa g,g\ppr\in G)\}\\
&=&\{ a\in A\mid na=a\quad(\fa n\in N)\}\ =\ A^N.
\end{eqnarray*}
Applying $S_{\ptq}$ to $A^N$ simply means that we regard $A^N$ as a $G/N$-set. This induces a natural isomorphism $\Inv^G_{G/N}\ov{\cong}{\Longrightarrow}S_{\ptq}\ci (-)^{\ptq}$.
\end{proof}


\section{Derivator-like properties and associated semi-Mackey functors}

As in \cite{Groth}, a derivator is a strict 2-functor $\Dbb\co\mathrm{Cat}^{\op}\rightarrow\mathrm{CAT}$ satisfying conditions {\rm (Der1),$\ldots$,(Der4)}, from the 2-category of small categories $\mathrm{Cat}$. (For the detail, see \cite{Groth}.)
In this section, we show that the bifunctor $\Bbbb\co\Sbb^{\op}\to\mathrm{CAT}$ satisfies properties analogous to them, when $\mathrm{Cat}$ is replaced by $\Sbb$.


In the following, we recall conditions {\rm (Der1),$\ldots$,(Der4)} from \cite{Groth} one by one, and consider their analogs. 
Let $\Dbb\co\mathrm{Cat}^{\op}\rightarrow\mathrm{CAT}$ be a strict 2-functor (i.e. a prederivator (\cite[Definition 1.1]{Groth})). For any 1-cell $u\co J\to K$ in $\mathrm{Cat}$, denote $\Dbb(u)$ by $u\uas$. Condition {\rm (Der 1)} is as follows.
\begin{condition}({\rm (Der 1)} in \cite[Definition 1.5]{Groth}.)
$\Dbb$ sends coproducts to products.
Namely, for any pair of 0-cells $J_1,J_2$ in $\mathrm{Cat}$, if we take their coproduct
\[ J_1\ov{u_1}{\lra}J_1\am J_2\ov{u_2}{\lla}J_2, \]
then 
\[ (u_1\uas,u_2\uas)\co \Dbb(J_1\am J_2)\to \Dbb(J_1)\ti\Dbb(J_2) \]is an equivalence of categories.
\end{condition}

Its analog for $\Bbbb$ is the following.
\begin{prop}\label{PropDer1}
$\Bbbb$ sends bicoproducts to products.
Namely, for any pair of 0-cells $\xg,\yh$ in $\Sbb$, if we take their bicoproduct
\[ \xg\ov{\althh}{\lra}\xg\am\yh\ov{\bet}{\lla}\yh, \]
then 
\[ ((\althh)\uas,(\bet)\uas)\co \Bbbb(\xg\am\yh)\to \Bbbb(\xg)\ti \Bbbb(\yh) \]
is an equivalence of categories.
\end{prop}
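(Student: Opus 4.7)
The plan is to replace the abstract bicoproduct by the concrete model of Proposition~\ref{Prop2CoprodVari}, split any object over it along the disjoint-union structure of the underlying $(G\ti H)$-set, and finish with the $\Ind$-equivalences of Proposition~\ref{PropIndEquiv} together with the fact that bifunctors preserve equivalences. Since bicoproducts are unique up to equivalence and Corollary~\ref{CorAst}\,(1) shows that $\Bbbb$ sends equivalent 1-cells to naturally isomorphic functors, I may assume without loss of generality that the bicoproduct is the standard model
\[ \xg \ov{\frac{\ups_X}{\iog}}{\lra} \frac{\Ind_{\iog}X \am \Ind_{\ioh}Y}{G\ti H} \ov{\frac{\ups_Y}{\ioh}}{\lla} \yh. \]
Setting $\Gamma=G\ti H$ and $M=\Ind_{\iog}X \am \Ind_{\ioh}Y$, the statement becomes that the pair of pullback functors defines an equivalence $\Gamma\sett/M\to\Gs/X\ti\Hs/Y$.

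Next, I will exploit that the two summands of $M$ are disjoint $\Gamma$-invariant subsets, so every object $(C\ov{\bfr}{\lra}M)$ of $\Gamma\sett/M$ splits canonically as $C=C_X \am C_Y$ with $C_X=\bfr\iv(\Ind_{\iog}X)$ and $C_Y=\bfr\iv(\Ind_{\ioh}Y)$. This is manifestly functorial in $C$ and induces an isomorphism of categories $\Gamma\sett/M \ov{\cong}{\lra} (\Gamma\sett/\Ind_{\iog}X) \ti (\Gamma\sett/\Ind_{\ioh}Y)$. Moreover, unravelling Definition~\ref{DefAst}, the pullback $(\frac{\ups_X}{\iog})\uas(C)=X\un{M}{\ti}C$ equals $X\un{\Ind_{\iog}X}{\ti}C_X$, because $\ups_X(x)=[e,x]\in\Ind_{\iog}X$ forces any matching $c\in C$ to lie in $C_X$; the analogous statement holds on the $Y$-side. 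Hence the comparison functor $((\frac{\ups_X}{\iog})\uas,(\frac{\ups_Y}{\ioh})\uas)$ factors through the above decomposition.

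Finally, Proposition~\ref{PropIndEquiv} tells us that both $\frac{\ups_X}{\iog}$ and $\frac{\ups_Y}{\ioh}$ are equivalences in $\Sbb$. Because a bifunctor automatically preserves equivalences, Corollary~\ref{CorAst} yields that the two restricted pullback functors $(\frac{\ups_X}{\iog})\uas\co \Gamma\sett/\Ind_{\iog}X \to \Gs/X$ and $(\frac{\ups_Y}{\ioh})\uas\co \Gamma\sett/\Ind_{\ioh}Y \to \Hs/Y$ are equivalences of categories; composing with the decomposition isomorphism of the previous step then gives the desired equivalence. The main point to watch is the compatibility check in the second step, namely that the canonical splitting $C=C_X\am C_Y$ is genuinely the one induced by the two pullback functors as defined in Definition~\ref{DefAst}, so that the comparison functor truly factors as a decomposition isomorphism followed by a pair of equivalences; everything else is a direct appeal to results from the excerpt.
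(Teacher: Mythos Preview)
Your proof is correct and follows essentially the same approach as the paper: reduce to the concrete bicoproduct of Proposition~\ref{Prop2CoprodVari}, split the slice category over the disjoint union $\Ind_{\iog}X\am\Ind_{\ioh}Y$, and then use the $\Ind$-equivalences to identify each factor with $\Gs/X$ and $\Hs/Y$ respectively. The paper's proof is a terse display of exactly this chain of equivalences, while you supply more of the surrounding justification (the reduction to the standard model, the compatibility of the splitting with the pullback functors, and the preservation of equivalences by $\Bbbb$).
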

\begin{proof}
As in Proposition \ref{Prop2CoprodVari}, a bicoproduct of $\xg$ and $\yh$ is given by
\[ \xg\ov{\frac{\ups_X}{\iog}}{\lra}\frac{\Ind_{\iog}X\am\Ind_{\ioh}Y}{G\times H}\ov{\frac{\ups_Y}{\ioh}}{\lla}\yh \]
for any pair of 0-cells $\xg,\yh$ in $\Sbb$. Thus the pair of functors $(\frac{\ups_X}{\iog})\uas,(\frac{\ups_Y}{\ioh})\uas$ induces an equivalence of categories
\begin{eqnarray*}
\Bbbb(\frac{\Ind_{\iog}X\am\Ind_{\ioh}Y}{G\times H})&\simeq&(G\ti H)\sett/\big(\Ind_{\iog}X\am\Ind_{\ioh}Y\big)\\
&\simeq&\big((G\ti H)\sett/\Ind_{\iog}X\big)\times\big((G\ti H)\sett/\Ind_{\ioh}Y\big)\\
&\simeq&\big(G\sett/X\big)\times\big(H\sett/Y\big)\\
&=&\Bbbb(\xg)\ti \Bbbb(\yh).
\end{eqnarray*}
\end{proof}

Remark that in $\mathrm{Cat}$, the one-object category $\pt$ is terminal. Moreover, for any 0-cell $K$ in $\mathrm{Cat}$, there is a one-to-one correspondence between objects $k\in\Ob(K)$ and functors $i_k\co\pt\to K$ which sends the unique object in $\pt$ to $k$.
Condition {\rm (Der 2)} can be stated as follows.
\begin{condition}({\rm (Der 2)} in \cite[Definition 1.5]{Groth}.)
Let $K$ be any 0-cell in $\mathrm{Cat}$. For any morphism $f\co A\to A\ppr$ in $\Dbb(K)$, the following are equivalent.
\begin{enumerate}
\item $f$ is an isomorphism in $\Dbb(K)$.
\item $i_k\uas (f)\co i_k\uas (A)\to i_k\uas (A\ppr)$ is an isomorphism in $\Dbb(\pt)$ for any $k\in\Ob(K)$.
\end{enumerate}
\end{condition}

Its analog for $\Bbbb$ is the following.
\begin{prop}\label{PropDer2}
Let $\xg$ be any 0-cell in $\Sbb$. For any morphism $f\co\Aa\to\Aap$ in $\Bbbb(\xg)=\Gs/X$, the following are equivalent.
\begin{enumerate}
\item $f$ is an isomorphism in $\Gs/X$.
\item $f|_{\afr\iv(x)}\co \afr\iv(x)\to\afr^{\prime-1}(x)$ is an isomorphism in $\Bbbb(\pte)=\sett$ $($i.e. a bijection of sets$)$ for any $x\in X$.
\item $f|_{\afr\iv(X_0)}\co \afr\iv(X_0)\to\afr^{\prime-1}(X_0)$ is an isomorphism for any $G$-orbit $X_0\se X$.
\end{enumerate}
\end{prop}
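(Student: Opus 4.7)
The plan is to establish the cycle $(1)\Rightarrow(3)\Rightarrow(2)\Rightarrow(1)$. The underlying principle is that a $G$-equivariant morphism over $X$ is an isomorphism in $\Gs/X$ precisely when it is a bijection of underlying sets, and bijectivity can be tested on the pieces of any $G$-stable partition of $X$, in particular on orbits and a fortiori on fibers over individual points.

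For $(1)\Rightarrow(3)$: if $g\co\Aap\to\Aa$ is an inverse of $f$ in $\Gs/X$, then for any $G$-orbit $X_0\se X$ the $G$-stability of $X_0$ together with $\afr\ci g=\afr\ppr$ ensures that $g$ restricts to a $G$-map $\afr^{\prime-1}(X_0)\to\afr\iv(X_0)$, and this restriction inverts $f|_{\afr\iv(X_0)}$. For $(3)\Rightarrow(2)$: fix $x\in X$ and put $X_0=Gx$. The assumed inverse of $f|_{\afr\iv(X_0)}$, being a map over $X_0$, sends $\afr^{\prime-1}(x)$ into $\afr\iv(x)$ and serves as a set-theoretic inverse of $f|_{\afr\iv(x)}$, making it a bijection.

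For $(2)\Rightarrow(1)$: we have set-theoretic decompositions $A=\bigsqcup_{x\in X}\afr\iv(x)$ and $A\ppr=\bigsqcup_{x\in X}\afr^{\prime-1}(x)$, and the condition $\afr\ppr\ci f=\afr$ means $f$ sends $\afr\iv(x)$ into $\afr^{\prime-1}(x)$ for each $x$. Bijectivity on every fiber therefore assembles to bijectivity of the total map $f\co A\to A\ppr$. The set-theoretic inverse of a bijective $G$-equivariant map over $X$ is automatically $G$-equivariant and over $X$, supplying the required inverse in $\Gs/X$.

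I do not foresee any genuine obstacle: the proposition is a direct consequence of the fiberwise/orbitwise description of morphisms in $\Gs/X$ and does not use the 2-categorical structure of $\Sbb$ beyond the definition of $\Bbbb(\xg)=\Gs/X$ and the identification $\Bbbb(\pte)=\sett$. The only thing to be careful about is that condition $(3)$ is to be interpreted as isomorphism of $G$-sets over $X_0$ (equivalently over $X$), so that the restriction of the hypothetical inverse is already $G$-equivariant; this is automatic since the orbit $X_0$ is $G$-stable.
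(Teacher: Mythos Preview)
Your argument is correct and complete. The paper's own proof consists of the single sentence ``This is trivial,'' so your cycle $(1)\Rightarrow(3)\Rightarrow(2)\Rightarrow(1)$ is simply a careful unpacking of what the author deemed obvious; there is no substantive difference in approach.
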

\begin{proof}
This is trivial.
\end{proof}


Condition {\rm (Der 3)} is as follows.
\begin{condition}({\rm (Der 3)} in \cite[Definition 1.5]{Groth}.)
For any 1-cell $u\co J\to K$ in $\mathrm{Cat}$, there is an adjoint triplet $u_!\lt u\uas\lt u\sas$, where $u\uas=\Dbb(u)$.
\end{condition}

Its analog for $\Bbbb$ is the following.
\begin{prop}\label{PropDer3}
For any 1-cell $\oc$, we have an adjoint triplet $\atp\lt\ata\lt\atb$.
\end{prop}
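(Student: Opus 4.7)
The plan is to simply assemble the two adjunctions that have already been established in the excerpt. By Proposition \ref{PropLeftAdj}, the functor $\atp$ constructed in Definition \ref{DefPlus} is left adjoint to $\ata$, via the natural bijection $\Phi,\Psi$ built out of the formulas $\Phi(\psi)(a)=(\afr(a),\psi([e,a]))$ and $\Psi(\vp)([\eta,a])=\eta\,p_B\ci\vp(a)$. By Proposition \ref{PropRightAdj}, the functor $\atb$ constructed in Definition \ref{DefBullet} is right adjoint to $\ata$, via the chain of adjunctions $\wt{\al}\uas\lt\wt{\al}\bu$ composed with $j\lt(-)\uth$ along the $\SIm$-factorization $\althh=\frac{\wt{\al}}{H}\ci\ulthh$.

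Thus nothing beyond quoting these two propositions is required: stringing them together yields $\atp\lt\ata\lt\atb$ as an adjoint triplet of functors between $\Gs/X$ and $\Hs/Y$. I would write the one-line proof as ``This is immediate from Propositions \ref{PropLeftAdj} and \ref{PropRightAdj}.'' There is no obstacle to anticipate, since all the real work---the explicit construction of $\atp$ and $\atb$, the verification of the unit/counit formulas, and the reduction to the stab-surjective case for the right adjoint---was carried out in those propositions. The only thing worth flagging, if one wanted to expand, is that the three functors are defined compatibly with respect to the $\SIm$-factorization, so that the composite adjunction $\atp\lt\ata$ and the composite adjunction $\ata\lt\atb$ share the same middle functor $\ata$ on the nose (not merely up to isomorphism), which is already guaranteed by Definitions \ref{DefPlus} and \ref{DefBullet}.
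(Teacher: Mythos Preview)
Your proposal is correct and matches the paper's own proof, which simply reads ``This is already shown.'' Both approaches amount to quoting Propositions \ref{PropLeftAdj} and \ref{PropRightAdj}, so there is nothing further to add.
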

\begin{proof}
This is already shown.
\end{proof}

In \cite{Groth}, condition {\rm (Der4)} is written as follows (\cite[Proposition 1.26]{Groth}). For any pair of functors $J_1\ov{u_1}{\lra}K\ov{u_2}{\lla}J_2$, let $\uu$ be the category defined by the following.
\begin{itemize}
\item[{\rm (i)}] An object in $\uu$ is a triplet $(j_1,j_2,\al)$ of $j_1\in\Ob(J_1),j_2\in\Ob(J_2)$ and $\al\in K(u_1(j_1),u_2(j_2))$.
\item[{\rm (ii)}] A morphism $(f_1,f_2)\co (j_1,j_2,\al)\to (j\ppr_1,j\ppr_2,\al\ppr)$ is a pair of $f_1\in J_1(j_1,j_1\ppr)$ and $f_2\in J_2(j_2,j_2\ppr)$ satisfying $u_2(f_2)\ci\al=\al\ppr\ci u_1(f_1)$. Composition is naturally induced from those in $J_1$ and $J_2$.
\end{itemize}
Projection functors $\pro_1\co\uu\to J_1$ and $\pro_2\co\uu\to J_2$ are naturally defined, which send $(j_1,j_2,\al)\in\Ob\uu$ to $\pro_1(j_1,j_2,\al)=j_1$ and $\pro_2(j_1,j_2,\al)=j_2$. They form a diagram
\begin{equation}\label{DiagComma}
\xy
(-9,7)*+{\uu}="0";
(9,7)*+{J_1}="2";
(-9,-7)*+{J_2}="4";
(9,-7)*+{K}="6";
{\ar^(0.56){\pro_1} "0";"2"};
{\ar_{\pro_2} "0";"4"};
{\ar^{u_1} "2";"6"};
{\ar_{u_2} "4";"6"};
{\ar@{=>}^{\ep} (2,2);(-2,-2)};
\endxy
\end{equation}
where $\ep$ is a natural transformation defined by $\ep_{(j_1,j_2,\al)}=\al$.
Condition equivalent to {\rm (Der 4)} can be stated as follows.
\begin{condition}\label{Cond4}({\rm (Der 4)}; Proposition 1.26 in \cite{Groth}.)
For any $(\ref{DiagComma})$, natural isomorphisms
\begin{equation}
(\pro_2)_{!}\ci(\pro_1)\uas\ov{\cong}{\Longrightarrow}u_2\uas\ci u_{1!}\quad\text{and}\quad u_1\uas\ci u_{2\ast}\ov{\cong}{\Longrightarrow}(\pro_1)\sas\ci(\pro_2)\uas
\end{equation}
are obtained from the adjoint property.
\end{condition}

\begin{rem}\label{RemComma}
For any pair of functors $J_1\ov{u_1}{\lra}K\ov{u_2}{\lla}J_2$ as above, diagram $(\ref{DiagComma})$ satisfies the following universal property for any diagram
\[
\xy
(-9,7)*+{I}="0";
(9,7)*+{J_1}="2";
(-9,-7)*+{J_2}="4";
(9,-7)*+{K}="6";
{\ar^{F_1} "0";"2"};
{\ar_{F_2} "0";"4"};
{\ar^{u_1} "2";"6"};
{\ar_{u_2} "4";"6"};
{\ar@{=>}^{\vp} (2,2);(-2,-2)};
\endxy
\]
in $\mathrm{Cat}$.
\begin{enumerate}
\item We have a functor $\wt{F}\co I\to\uu$ which sends
\begin{itemize}
\item[-] an object $X\in\Ob(I)$ to $\wt{F}(X)=(F_1(X),F_2(X),\vp_X)$,
\item[-] a morphism $i\in I(X,Y)$ to $\wt{F}(i)=(F_1(i),F_2(i))$.
\end{itemize}
This functor makes the following diagram commutative, and satisfies $\vp\bu=\ep\ci\wt{F}$.
\[
\xy
(-22,16)*+{I}="-2";
(-9,6)*+{\uu}="0";
(9,6)*+{J_1}="2";
(-9,-8)*+{J_2}="4";
{\ar^(0.56){\wt{F}} "-2";"0"};
{\ar@/^1.10pc/^{F_1} "-2";"2"};
{\ar@/_1.18pc/_{F_2} "-2";"4"};
{\ar_(0.56){\pro_1} "0";"2"};
{\ar^{\pro_2} "0";"4"};
{\ar@{}^{_{\circlearrowright}} (-12,3);(-18,5)};
{\ar@{}_{_{\circlearrowright}} (-8,7);(-10,14)};
\endxy
\]
\item For any other functor $F\co I\to\uu$ and natural transformations
\[
\xy
(-22,16)*+{I}="-2";
(-9,6)*+{\uu}="0";
(9,6)*+{J_1}="2";
(-9,-8)*+{J_2}="4";
{\ar^(0.56){F} "-2";"0"};
{\ar@/^1.10pc/^{F_1} "-2";"2"};
{\ar@/_1.18pc/_(0.56){F_2} "-2";"4"};
{\ar_(0.56){\pro_1} "0";"2"};
{\ar^{\pro_2} "0";"4"};
{\ar@{=>}_{_{\xi_1}} (-8,9);(-6,13)};
{\ar@{=>}^{_{\xi_2}} (-13.5,3);(-17.5,0.5)};
\endxy
\]
which make the diagram of natural transformations
\[
\xy
(-13,7)*+{u_1\ci\pro_1\ci F}="0";
(12,7)*+{u_1\ci F_1}="2";
(-13,-6)*+{u_2\ci\pro_2\ci F}="4";
(12,-6)*+{u_2\ci F_2}="6";
{\ar@{=>}^(0.6){u_1\ci\xi_1} "0";"2"};
{\ar@{=>}_{\ep\ci F} "0";"4"};
{\ar@{=>}^{\vp} "2";"6"};
{\ar@{=>}_(0.6){u_2\ci\xi_2} "4";"6"};
{\ar@{}|\circlearrowright "0";"6"};
\endxy
\]
commutative, there exists a unique natural transformation $\xi\co F\tc \wt{F}$ which satisfies $\xi_1=\pro_1\ci\xi$ and $\xi_2=\pro_2\ci\xi$. In fact, $\xi$ is given by
\[ \xi_X=(\xi_{1X},\xi_{2X})\qquad(\fa X\in\Ob(I)). \]
\end{enumerate}
\end{rem}
If we replace $\mathrm{Cat}$ by $\Sbb$, this is satisfied by the bipullback obtained in Proposition \ref{Prop2Pullback}, as Remark 3.2.24 in \cite{N_BisetMackey} implies. In this sense, the following proposition can be seen as an analog of {\rm (Der4)}, for $\Bbbb$.

\begin{prop}\label{PropDer4}
For any bipullback
\begin{equation}\label{Diag2PB}
\xy
(-8,7)*+{\wl}="0";
(8,7)*+{\xg}="2";
(-8,-7)*+{\yh}="4";
(8,-7)*+{\zk}="6";
{\ar^{\frac{\gamma}{\mu}} "0";"2"};
{\ar_{\frac{\delta}{\nu}} "0";"4"};
{\ar^{\althh} "2";"6"};
{\ar_{\bet} "4";"6"};
{\ar@{<=}^{\kappa} (-2,-2);(2,2)};
\endxy
\end{equation}
in $\Sbb$, we have the following natural isomorphisms.
\begin{itemize}
\item[{\rm (i)}] $(\bet)\uas\ci\atp\ov{\cong}{\Longrightarrow}(\frac{\delta}{\nu})_+\ci(\frac{\gamma}{\mu})\uas$.
\item[{\rm (ii)}]  $(\bet)\uas\ci\atb\ov{\cong}{\Longrightarrow}(\frac{\delta}{\nu})\bu\ci(\frac{\gamma}{\mu})\uas$.
\end{itemize}
\end{prop}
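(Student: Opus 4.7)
My plan is to prove both isomorphisms by the standard Beck--Chevalley (mate) construction, and then verify that the resulting comparison map is an isomorphism by an explicit computation using the canonical bipullback of Proposition \ref{Prop2Pullback}. Since $\Bbbb$ is a bifunctor by Corollary \ref{CorAst} and the 2-cell $\kappa$ is invertible, applying $\Bbbb$ to the bipullback square yields a natural isomorphism
\[
(\frac{\gamma}{\mu})\uas\ci\ata \;\cong\; (\frac{\delta}{\nu})\uas\ci(\bet)\uas.
\]
Pasting this with the unit of $\atp\lt\ata$ and the counit of $(\frac{\delta}{\nu})_+\lt(\frac{\delta}{\nu})\uas$ produces a canonical natural transformation $\Theta_+\co (\frac{\delta}{\nu})_+\ci(\frac{\gamma}{\mu})\uas \Rightarrow (\bet)\uas\ci\atp$, the candidate for {\rm (i)}; pasting the inverse of the same isomorphism with the unit of $\ata\lt\atb$ and the counit of $(\frac{\delta}{\nu})\uas\lt(\frac{\delta}{\nu})\bu$ gives an analogous candidate $\Theta\bu$ for {\rm (ii)}.

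Since bipullbacks are unique up to equivalence, I may assume $\wl=\frac{F}{G\ti H}$ is the canonical model, with $F=\{(x,y,k)\in X\ti Y\ti K\mid \be(y)=k\ax\}$ and the $G\ti H$-action, projections, and $\kappa$ as in Proposition \ref{Prop2Pullback}. For {\rm (i)}, given $\Aar\in\Ob(\Gs/X)$, unpacking $\atp$ (Definitions \ref{DefSS}, \ref{DefPlus}), $(\bet)\uas$ (Definition \ref{DefAst}), $(\frac{\gamma}{\mu})\uas$, and $(\frac{\delta}{\nu})_+$ exhibits both $(\bet)\uas\atp\Aa$ and $(\frac{\delta}{\nu})_+(\frac{\gamma}{\mu})\uas\Aa$ as the $H$-set
\[
T=\{(y,k,a)\in Y\ti K\ti A\mid \be(y)=k\,\al(\afr(a))\}/\!\sim,
\]
where $(y,k,a)\sim(y,k\thh_{\afr(a)}(g)\iv,ga)$ for $g\in G$, with $H$-action $h\cdot[y,k,a]=[hy,\tau_{\be(y)}(h)k,a]$ and structure map $[y,k,a]\mapsto y$ to $Y$. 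A direct check shows that the two resulting identifications coincide with $\Theta_+$, proving {\rm (i)}.

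For {\rm (ii)}, the argument is parallel, but proceeds through the decomposition $\atb \cong \wt{\al}\bu\ci S_{\thh}\ci(-)\uth$ of Definition \ref{DefBullet}. The decisive observation is that the fixed subset of the pullback $F\un{X}{\ti}A$ with respect to the acting part of $\frac{\delta}{\nu}$, in the sense of Definitions \ref{DefEqual} and \ref{DefFix}, corresponds via the projection to $A$ with the $\althh$-fixed subset $A\uth\se A$; this follows by inspecting the $G\ti H$-action on $F$ given in Proposition \ref{Prop2Pullback} and invoking Remark \ref{RemEqual}. Under this identification, the $\SIm$ and $\Pi$-constructions on both sides match fibrewise, yielding an explicit isomorphism that one verifies coincides with $\Theta\bu$.

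The main obstacle is the combinatorial bookkeeping: one must identify the explicit comparison maps with the abstract Beck--Chevalley mates (rather than merely being some natural isomorphism), and one must carefully track how the $G\ti H$-action on $F$ interacts with the $\SIm$, $(-)\uth$, and $\Pi$-constructions. Once the canonical model of the bipullback is fixed and the adjunctions from Propositions \ref{PropLeftAdj} and \ref{PropRightAdj} are made explicit, the verification reduces to elementwise manipulations.
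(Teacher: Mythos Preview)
Your approach to {\rm (i)} is essentially the same as the paper's: reduce to the canonical bipullback of Proposition~\ref{Prop2Pullback} and compute both sides explicitly as the $H$-set you call $T$. The paper writes down the same quotient descriptions and the explicit bijection $(y,[k,a])\mapsto[e,\afr(a),y,k,a]$, without, however, identifying it with the Beck--Chevalley mate; indeed the remark following the proposition explicitly disclaims this, as it is not needed for the applications to semi-Mackey functors.

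For {\rm (ii)} your route genuinely differs. You propose a direct computation through the decomposition $\atb\cong\Pi_{\wt{\al}}\ci S_{\thh}\ci(-)\uth$, using the (correct) observation that the $\nu$-fixed part of $F\un{X}{\ti}A$ corresponds to $A\uth$. The paper instead bypasses all of this: it applies {\rm (i)} to the \emph{transposed} bipullback (which is again a bipullback since 2-cells in $\Sbb$ are invertible) to obtain $\ata\ci(\bet)_+\cong(\frac{\gamma}{\mu})_+\ci(\frac{\delta}{\nu})\uas$, and then invokes the adjunctions $\ata\ci(\bet)_+\lt(\bet)\uas\ci\atb$ and $(\frac{\gamma}{\mu})_+\ci(\frac{\delta}{\nu})\uas\lt(\frac{\delta}{\nu})\bu\ci(\frac{\gamma}{\mu})\uas$ to conclude by uniqueness of right adjoints. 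This is considerably shorter and avoids unpacking $(-)\uth$, $S_{\thh}$, and $\Pi_{\wt{\al}}$ on the bipullback side. Your direct computation would work, but the adjointness trick is the more economical argument here, and your insistence on matching the explicit maps with the canonical mates $\Theta_+,\Theta\bu$ is extra work not required by the statement.
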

\begin{rem}
\begin{enumerate}
\item As for diagram $(\ref{DiagComma})$, there is no 2-cell $\ep\ppr$ in general
\[
\xy
(-9,7)*+{\uu}="0";
(9,7)*+{J_1}="2";
(-9,-7)*+{J_2}="4";
(9,-7)*+{K}="6";
{\ar^(0.56){\pro_1} "0";"2"};
{\ar_{\pro_2} "0";"4"};
{\ar^{u_1} "2";"6"};
{\ar_{u_2} "4";"6"};
{\ar@{=>}^{\ep\ppr} (-2,-2);(2,2)};
\endxy,
\]
since 2-cells in $\mathrm{Cat}$ are not invertible. Condition \ref{Cond4} is not symmetric, and does not imply $(\pro_1)_{!}\ci(\pro_2)\uas\cong u_1\uas\ci u_{2!}$ or $u_2\uas\ci u_{1\ast}\cong(\pro_2)\sas\ci(\pro_1)\uas$.

However in our case in $\Sbb$, the transposed diagram of $(\ref{Diag2PB})$ is of course again a bipullback, because of the symmetry in its definition (remark that 2-cells in $\Sbb$ are invertible). Thus we also have isomorphisms
\begin{itemize}
\item[{\rm (i)$\ppr$}] $\ata\ci(\bet)_+\ov{\cong}{\Longrightarrow}(\frac{\gamma}{\mu})_+\ci(\frac{\delta}{\nu})\uas$,
\item[{\rm (ii)$\ppr$}]  $\ata\ci(\bet)\bu\ov{\cong}{\Longrightarrow}(\frac{\gamma}{\mu})\bu\ci(\frac{\delta}{\nu})\uas$.
\end{itemize}
\item As in \cite{Groth}, in the definition of a derivator, these natural isomorphisms are explicitly constructed by using adjointness. However in this article, we do not specify the construction of these isomorphisms, since we do not need it to obtain semi-Mackey functors in the later argument.
\end{enumerate}
\end{rem}
\begin{proof}(Proof of Proposition \ref{PropDer4}.)
{\rm (i)} We may assume $(\ref{Diag2PB})$ is of the form obtained in Proposition \ref{Prop2Pullback}.
We shall show $(\bet)\uas\ci\atp\cong(\frac{\wp_Y}{\pro^{(H)}})_+\ci(\frac{\wp_X}{\pro^{(G)}})\uas$.
For any $\Aa\in\Ob(\Gs/X)$, we have
\begin{eqnarray*}
(\bet)\uas\ci\atp\Aar&=&(\bet)\uas(\SIm(\althh\ci\frac{\afr}{G})\ov{\wt{(\al\ci\afr)}}{\lra}Z)\\
&=&(Y\un{Z}{\ti}\SIm(\althh\ci\frac{\afr}{G})\ov{p_Y}{\lra}Y)
\end{eqnarray*}
and
\begin{eqnarray*}
(\frac{\wp_Y}{\pro^{(H)}})_+\ci(\frac{\wp_X}{\pro^{(G)}})\uas\Aar&=&(\frac{\wp_Y}{\pro^{(H)}})_+(F\un{X}{\ti}A\ov{p_F}{\lra}F)\\
&=&(\SIm(\frac{\wp_Y}{\pro^{(H)}}\ci\frac{p_F}{G\ti H})\ov{\wt{(\wp_Y\ci p_F)}}{\lra}Y).
\end{eqnarray*}
By the definition of $\SIm(\althh\ci\frac{\afr}{G})=K\un{G}{\ti}A$, the set $Y\un{Z}{\ti}\SIm(\althh\ci\frac{\afr}{G})$ is the quotient of
\[ Y\un{Z}{\ti}(K\ti A)=\{(y,k,a)\in Y\ti K\ti A\mid \be(y)=k\al(\afr(a))\} \]
by the relation
\begin{eqnarray*}
&&(y,k,a)\sim (y\ppr,k\ppr,a\ppr)\\
&&\Longleftrightarrow\ y=y\ppr\ \text{and, there exists}\ g\in G\ \text{satisfying}\ k=k\ppr\thh_{\afr(a)}(g),\ a\ppr=ga.
\end{eqnarray*}
$H$-action on $Y\un{Z}{\ti}\SIm(\althh\ci\frac{\afr}{G})$ is given by
\[ h(y,[k,a])=(hy,[\tau_y(h)k,a])\quad(\fa h\in H, (y,[k,a])\in Y\un{Z}{\ti}\SIm(\althh\ci\frac{\afr}{G})). \]


On the other hand, $\SIm(\frac{\wp_Y}{\pro^{(H)}}\ci\frac{p_F}{G\ti H})$ is the quotient of
\begin{eqnarray*}
H\ti(F\un{X}{\ti}A)&=&\{(\eta,x,y,k,a)\in H\ti X\ti Y\ti K\ti A\mid \be(y)=k\al(x),\ x=\afr(a) \}\\
&\cong&\{(\eta,y,k,a)\in H\ti Y\ti K\ti A\mid \be(y)=k\al(\afr(x)) \}
\end{eqnarray*}
by the relation
\begin{eqnarray*}
&&(\eta,y,k,a)\sim (\eta\ppr,y\ppr,k\ppr,a\ppr)\\
&&\Longleftrightarrow\ \text{there exist}\ g\in G\ \text{and}\ h\in H\ \text{satisfying}\\
&&\qquad\qquad\eta=\eta\ppr h,\ y\ppr=hy,\ k\ppr=\tau_y(h)k\thh_{\afr(a)}(g)\iv,\ a\ppr=ga.
\end{eqnarray*}
$H$-action on $H\ti(F\un{X}{\ti}A)$ is given by
\[ h[\eta,y,k,a]=[h\eta,y,k,a]\quad(\fa h\in H, [\eta,y,k,a]\in\SIm(\frac{\wp_Y}{\pro^{(H)}}\ci\frac{p_F}{G\ti H})). \]

It can be easily confirmed that the map
\[ Y\un{Z}{\ti}\SIm(\althh\ci\frac{\afr}{G})\to\SIm(\frac{\wp_Y}{\pro^{(H)}}\ci\frac{p_F}{G\ti H})\ ;\ (y,[k,a])\mapsto [e,\afr(a),y,k,a] \]
is a well-defined isomorphism in $\Hs/Y$. This gives a natural isomorphism $(\bet)\uas\ci\atp\ov{\cong}{\Longrightarrow}(\frac{\wp_Y}{\pro^{(H)}})_+\ci(\frac{\wp_X}{\pro^{(G)}})\uas$.

\bigskip

{\rm (ii)} This follows from the isomorphism
\[ \ata\ci(\bet)_+\cong(\frac{\gamma}{\mu})_+\ci (\frac{\delta}{\nu})\uas \]
obtained in {\rm (i)} (applied to the transposed diagram), and the following adjointness.
\[ \ata\ci(\bet)_+\lt(\bet)\uas\ci\atb,\quad (\frac{\gamma}{\mu})_+\ci (\frac{\delta}{\nu})\uas\lt(\frac{\delta}{\nu})\bu\ci (\frac{\gamma}{\mu})\uas. \]
\end{proof}


Let us show that these properties induce (semi-)Mackey functors on $\Sbb$, in the sense of \cite[Definition 5.1.1]{N_BisetMackey}.

\begin{dfn}\label{DefOmega}
For any 0-cell $\xg$, define
\[ \Afr(\xg)=\Ob(\Gs/X)/\cong \]
to be the set of isomorphism classes of objects in $\Gs/X$. Coproducts and products (= fibered products over $X$ as $G$-sets) give a structure of a commutative semi-ring on $\Afr(\xg)$. This is called the {\it semi-Burnside ring}.

Define $\Om(\xg)$ to be the additive completion (= Grothendieck ring)
\[ \Om(\xg)=K_0(\Afr(\xg)) \]
of $\Afr(\xg)$. This is called the {\it Burnside ring}.

Remark that the functors $\ata,\atp,\atb$ associated to 1-cell $\oc$ induce maps on isomorphism classes
\begin{eqnarray*}
&\Afr\uas(\althh)\co\Afr(\yh)\to\Afr(\xg),&\\
&\Afr_+(\althh)\co\Afr(\xg)\to\Afr(\yh),&\\
&\Afr\bu(\althh)\co\Afr(\xg)\to\Afr(\yh)&
\end{eqnarray*}
respectively. By the adjoint property, $\Afr\uas(\althh)$ is a semi-ring homomorphism, $\Afr_+(\althh)$ is an additive monoid homomorphism, and $\Afr\bu(\althh)$ is a multiplicative monoid homomorphism.
\end{dfn}


The next corollary immediately follows from what we have shown.
\begin{cor}\label{CorOmega}
The triplet $(\Afr\uas,\Afr_+,\Afr\bu)$ satisfies the following.
\begin{enumerate}
\item $\Afr\uas\co\Sbb^{\op}\to\Sett$ and $\Afr_+,\Afr\bu\co\Sbb\to\Sett$ are strict 2-functors, where $\Sett$ is the category of sets, viewed as a 2-category equipped only with identity 2-cells. For any 0-cell $\xg\in\Sbb^0$, we have
\[ \Afr\uas(\xg)=\Afr_+(\xg)=\Afr\bu(\xg)\ (=\Afr(\xg)). \]
\item For any pair of 0-cells $\xg,\yh\in\Sbb^0$, if we take their bicoproduct
\[ \xg\ov{\frac{\ups_X}{\iota^{(G)}}}{\lra}\frac{\Ind_{\iota^{(G)}}X\am\Ind_{\iota^{(H)}}Y}{G\ti H}\ov{\frac{\ups_Y}{\iota^{(H)}}}{\lla}\yh, \]
then
\[ (\Afr\uas(\frac{\ups_X}{\iota^{(G)}}),\Afr\uas(\frac{\ups_Y}{\iota^{(H)}}))\co\Afr(\frac{\Ind^{(G)}X\am\Ind^{(H)}Y}{G\ti H})\to\Afr(\xg)\ti\Afr(\yh) \]
is a bijection.
\item For any bipullback
\begin{equation}\label{This2PB}
\xy
(-8,7)*+{\wl}="0";
(8,7)*+{\xg}="2";
(-8,-7)*+{\yh}="4";
(8,-7)*+{\zk}="6";
{\ar^{\frac{\gamma}{\mu}} "0";"2"};
{\ar_{\frac{\delta}{\nu}} "0";"4"};
{\ar^{\althh} "2";"6"};
{\ar_{\bet} "4";"6"};
{\ar@{<=}^{\kappa} (-2,-2);(2,2)};
\endxy
\end{equation}
in $\Sbb$, the following diagrams are commutative.
\[
\xy
(-11,7)*+{\Afr(\wl)}="0";
(11,7)*+{\Afr(\xg)}="2";
(-11,-7)*+{\Afr(\yh)}="4";
(11,-7)*+{\Afr(\zk)}="6";
{\ar^{\Afr_+(\frac{\gamma}{\mu})} "0";"2"};
{\ar^{\Afr\uas(\frac{\delta}{\nu})} "4";"0"};
{\ar_{\Afr\uas(\althh)} "6";"2"};
{\ar_{\Afr_+(\bet)} "4";"6"};
{\ar@{}|\circlearrowright "0";"6"};
\endxy
\ ,\quad
\xy
(-11,7)*+{\Afr(\wl)}="0";
(11,7)*+{\Afr(\xg)}="2";
(-11,-7)*+{\Afr(\yh)}="4";
(11,-7)*+{\Afr(\zk)}="6";
{\ar^{\Afr\bu(\frac{\gamma}{\mu})} "0";"2"};
{\ar^{\Afr\uas(\frac{\delta}{\nu})} "4";"0"};
{\ar_{\Afr\uas(\althh)} "6";"2"};
{\ar_{\Afr\bu(\bet)} "4";"6"};
{\ar@{}|\circlearrowright "0";"6"};
\endxy
\]
\end{enumerate}
\end{cor}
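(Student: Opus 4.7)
The plan is to deduce each of the three claims by pushing results already established for the bifunctor $\Bbbb$ down to isomorphism classes of objects. The underlying reduction is uniform: any natural isomorphism $F\cong F\ppr$ between functors of categories descends to an equality of the induced maps on isomorphism classes of objects, so every structural isomorphism produced by Corollary \ref{CorAst} and Propositions \ref{PropDer1},\ref{PropDer4} becomes a strict equality on the sets $\Afr(-)$, and every (invertible) 2-cell of $\Sbb$ is sent to an identity.

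For (1), Corollary \ref{CorAst} already asserts that $\Bbbb\co\Sbb^{\op}\to\mathrm{CAT}$ is a bifunctor with natural isomorphisms $\Bbbb(\bet\ci\althh)\cong\Bbbb(\althh)\ci\Bbbb(\bet)$ and $\Bbbb(\id_{\xg})\cong\Id_{\Bbbb(\xg)}$, and that each 2-cell $\ep$ is sent to an invertible natural transformation $\Bbbb(\ep)$. Descending to $\Ob/\!\cong$ turns both families of coherence isomorphisms into genuine equalities and collapses 2-cells, yielding the strict 2-functor $\Afr\uas\co\Sbb^{\op}\to\Sett$. For $\Afr_+$ and $\Afr\bu$ the same pseudo-functoriality is obtained by applying uniqueness of left (respectively right) adjoints to the adjoint triplets of Proposition \ref{PropRightAdj}: from $(\bet\ci\althh)\uas\cong\ata\ci(\bet)\uas$ one extracts natural isomorphisms $(\bet\ci\althh)_+\cong(\bet)_+\ci\atp$ and $(\bet\ci\althh)\bu\cong(\bet)\bu\ci\atb$, which likewise descend to equalities after taking $\Ob/\!\cong$.

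Claim (2) is immediate from Proposition \ref{PropDer1}: the pair $((\frac{\ups_X}{\iog})\uas,(\frac{\ups_Y}{\ioh})\uas)$ is an equivalence of categories from $\Bbbb(\frac{\Ind_{\iog}X\am\Ind_{\ioh}Y}{G\ti H})$ to $\Bbbb(\xg)\ti\Bbbb(\yh)$, and an equivalence of categories always induces a bijection on isomorphism classes of objects.

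For (3), we invoke Proposition \ref{PropDer4} together with the symmetric versions (i)$\ppr$ and (ii)$\ppr$ stated in the Remark immediately following it, which are available because the transpose of the bipullback $(\ref{This2PB})$ is again a bipullback in $\Sbb$. Version (i)$\ppr$ yields a natural isomorphism $\ata\ci(\bet)_+\cong(\frac{\gamma}{\mu})_+\ci(\frac{\delta}{\nu})\uas$ of functors $\Hs/Y\to\Gs/X$, which on isomorphism classes becomes the equality $\Afr\uas(\althh)\ci\Afr_+(\bet)=\Afr_+(\frac{\gamma}{\mu})\ci\Afr\uas(\frac{\delta}{\nu})$, i.e.\ the first square; version (ii)$\ppr$ produces the second square identically. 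No substantial obstacle remains: the entire argument is a formal descent, whose only subtlety is the remark that $\Sett$ is regarded here as a 2-category with only identity 2-cells, so that every natural isomorphism among the functors between the categories $\Bbbb(-)$ genuinely collapses to an equality of maps of sets.
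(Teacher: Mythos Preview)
Your proposal is correct and matches the paper's approach: the paper states only that ``the next corollary immediately follows from what we have shown'' without further argument, and your write-up supplies exactly the routine descent from Corollary \ref{CorAst}, Proposition \ref{PropDer1}, and Proposition \ref{PropDer4} (together with its transposed versions (i)$\ppr$, (ii)$\ppr$) to isomorphism classes that the paper leaves implicit.
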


\begin{rem}\label{Rem22A}
As defined in \cite[Definition 5.1.1]{N_BisetMackey}, a pair of strict 2-functors $(S\uas,S\sas)$ satisfying the condition {\rm (1),(2),(3)} above, is called a {\it semi-Mackey functor} on $\Sbb$. Corollary \ref{CorOmega} is saying that $(\Afr\uas,\Afr_+)$ and $(\Afr\uas,\Afr\bu)$ are semi-Mackey functors.

If $(S\uas,S\sas)$ is a semi-Mackey functor on $\Sbb$, the value $S(\xg)=S\uas(\xg)=S\sas(\xg)$ has a naturally induced commutative monoid structure for each 0-cell $\xg$ (\cite[Proposition 5.1.7]{N_BisetMackey}). If this monoid is an abelian group for any $\xg$, then $(S\uas,S\sas)$ is called a {\it Mackey functor} on $\Sbb$.
\end{rem}


\section{Partial Tambara structure on $\Om$}

For a fixed finite group $G$, a Tambara functor is a machinery to deal with $\Ind,\Res$ and $\Jnd$ between subgroups of $G$. While the compositions $\Res\ci\Ind$ (and $\Res\ci\Jnd$) are calculated by using pullbacks, the compositions $\Jnd\ci\Ind$ are calculated by using {\it exponential diagrams}.
Let us briefly recall the definition of ordinary {\it Tambara functors} from \cite{Tambara}. (In \cite{Tambara}, Tambara functor is called a TNR-functor.) 

\begin{dfn}\label{DefExp}$($\cite[section 1]{Tambara}$)$
Let $G$ be a fixed finite group. For a morphism $f\in\Gs(X,Y)$ and an object $\Aa\in\Ob(\Gs/X)$, take
\begin{eqnarray*}
f\bu\Aa&=&(\Pi_f(A)\ov{\pi_Y}{\lra}Y),\\
f\uas f\bu\Aa&=&(\XY\Pi_f(A)\ov{p_X}{\lra}X).
\end{eqnarray*}
Then the unit of the adjoint $f\uas\lt f\bu$ gives a morphism $\rho\in\Gs/X(f\uas f\bu\Aa,\Aa)$. Thus we obtain a commutative diagram in $\Gs$
\begin{equation}\label{DiagExp}
\xy
(-18,8)*+{X}="0";
(-3,8)*+{A}="2";
(18,8)*+{\XY\Pi_f(A)}="4";
(-18,-8)*+{Y}="10";
(18,-8)*+{\Pi_f(A)}="14";
{\ar_{\afr} "2";"0"};
{\ar_(0.62){\rho} "4";"2"};
{\ar_{f} "0";"10"};
{\ar^{f\di} "4";"14"};
{\ar^{\pi_Y} "14";"10"};
{\ar@{}|\circlearrowright "0";"14"};
\endxy
\end{equation}
whose outer square is a fibered product. Explicitly, the map $\rho$ is given by
\[ \rho(x,(y,\sig))=\sig(x)\qquad(\fa (x,(y,\sig))\in\XY\Pi_f(A)). \]

A diagram in $\Gs$ isomorphic to $(\ref{DiagExp})$ arising from some $f$ and $\afr$, is called an {\it exponential diagram}.
\end{dfn}


\begin{dfn}\label{DefTam}$($\cite[section 2]{Tambara}$)$
Let $G$ be a fixed finite group. A {\it semi-Tambara functor} $T$ is a triplet $T=(T\uas,T_+,T\bu)$ of functors on $\Gs$
\begin{eqnarray*}
T\uas\co\Gs\to\Sett,&&\text{contravariant}\\
T_+\co\Gs\to\Sett,&&\text{covariant}\\
T\bu\co\Gs\to\Sett,&&\text{covariant}
\end{eqnarray*}
which satisfies the following conditions.
\begin{itemize}
\item[{\rm (i)}] $(T\uas,T_+)$ and $(T\uas,T\bu)$ are semi-Mackey functors on $G$. In particular we have
\[ T\uas(X)=T\bu(X)=T_+(X) \]
for any $X\in\Ob(\Gs)$. We write this simply as $T(X)$.
\item[{\rm (ii)}] For any exponential diagram
\[
\xy
(-18,8)*+{X}="0";
(-1,8)*+{A}="2";
(17,8)*+{Z}="4";
(-18,-8)*+{Y}="10";
(17,-8)*+{P}="14";
{\ar_{\afr} "2";"0"};
{\ar_{\rho} "4";"2"};
{\ar_{f} "0";"10"};
{\ar^{\lam} "4";"14"};
{\ar^{\pi} "14";"10"};
{\ar@{}|\circlearrowright "0";"14"};
\endxy
\]
in $\Gs$, the following diagram becomes commutative.
\[
\xy
(-18,8)*+{T(X)}="0";
(-1,8)*+{T(A)}="2";
(17,8)*+{T(Z)}="4";
(-18,-8)*+{T(Y)}="10";
(17,-8)*+{T(P)}="14";
{\ar_{T_+(\afr)} "2";"0"};
{\ar^{T\uas(\rho)} "2";"4"};
{\ar_{T\bu(f)} "0";"10"};
{\ar^{T\bu(\lam)} "4";"14"};
{\ar^{T_+(\pi)} "14";"10"};
{\ar@{}|\circlearrowright "0";"14"};
\endxy
\]
\end{itemize}

If $T$ is a semi-Tambara functor, then $T(X)$ has a canonically induced structure of a commutative semi-ring (\cite{Tambara}). For any $G$-map $f$, each $T\uas(f),T_+(f),T\bu(f)$ becomes semi-ring homomorphism, additive homomorphism, and multiplicative homomorphism, respectively.

A semi-Tambara functor $T$ is called a {\it Tambara functor} if $T(X)$ is a ring for any $X\in\Ob(\Gs)$. This is equivalent to require $(T\uas,T_+)$ to be Mackey functor on $G$.
\end{dfn}
\begin{ex}\label{SemiBurnTam}
For any fixed $G$, semi-Burnside rings form a semi-Tambara functor $A$ on $G$, which assigns
\begin{eqnarray*}
X\mapsto\ A(X)=(\Gs/X)/\cong\qquad\ &&(\fa X\in\Ob(\Gs)),\\
f\mapsto\begin{cases}A\uas(f)=f\uas\\ A_+(f)=f_+\\ A\bu(f)=f\bu\\ \end{cases}&&(\fa f\in\Gs(X,Y))
\end{eqnarray*}
as in \cite{Tambara}. Similarly, Burnside rings form a Tambara functor on $G$.
\end{ex}


\bigskip

We consider their analogs on $\Sbb$.
\begin{dfn}\label{DefPartialExp}
For a 1-cell $\oc$ and an object $\Aa\in\Ob(\Gs/X)$, take
\begin{eqnarray*}
\atb\Aa&=&(P\ov{\pi_Y}{\lra}Y),\\
\ata \atb\Aa&=&(\XY P\ov{p_X}{\lra}X).
\end{eqnarray*}
Then the unit of the adjoint $\ata\lt\atb$ gives a morphism
\[ \zeta\in\Gs/X(\ata\ \atb\Aa,\Aa). \]
Together with Proposition \ref{PropAst}, we obtain a commutative diagram in $\Sbb$
\begin{equation}\label{DiagPartialExp}
\xy
(-18,8)*+{\xg}="0";
(-3,8)*+{\frac{A}{G}}="2";
(18,8)*+{\frac{\XY P}{G}}="4";
(-18,-8)*+{\yh}="10";
(18,-8)*+{\frac{P}{H}}="14";
{\ar_{\frac{\afr}{G}} "2";"0"};
{\ar_(0.52){\frac{\zeta}{G}} "4";"2"};
{\ar_{\althh} "0";"10"};
{\ar^{\althhd} "4";"14"};
{\ar^{\frac{\pi_Y}{H}} "14";"10"};
{\ar@{}|\circlearrowright "0";"14"};
\endxy
\end{equation}
whose outer square is a bipullback. In this article, we call $(\ref{DiagPartialExp})$ the {\it partial exponential diagram} in $\Sbb$, associated to $\althh$ and $\afr$. The term {\it partial} means that, contrary to the case of ${}_G\mathit{set}$, 1-cells $\frac{\afr}{G}$ appearing in $(\ref{DiagPartialExp})$ is restricted to equivariant ones.
\end{dfn}


\begin{rem}\label{RemQuasiExp}
Let us look at the construction of $(\ref{DiagPartialExp})$ more in detail.
\begin{itemize}
\item[-] Put $\wt{X}=\SIm(\althh)$, and $\althh=\frac{\wt{\al}}{H}\ci\ulthh$ be the $\SIm$-factorization.
\item[-] Take $\Aa\uth=(A\uth,\afr\uth)$, and put $S_{\thh}(\Aa\uth)=(\wt{A}\ov{\wt{\afr}}{\lra}\wt{X})$.
\end{itemize}
Then by definition, we have
\begin{eqnarray*}
&\atb\Aa=\Pi_{\wt{\al}}(\wt{A},\wt{\afr})=(\Pi_{\wt{\al}}(\wt{A})\ov{\pi_Y}{\lra}Y),&\\
&\ata\atb\Aa=\ata\Pi_{\wt{\al}}(\wt{A},\wt{\afr})=(\XY\Pi_{\wt{\al}}(\wt{A})\ov{p_X}{\lra}X).&
\end{eqnarray*}
Here, $\Pi_{\wt{\al}}(\wt{A})$ is given by
\[ \Pi_{\wt{\al}}(\wt{A})=\Set{ (y,\sig)|\begin{array}{l}y\in Y,\\ \sig\in\sett(\wt{\al}\iv(y),\wt{A}),\end{array}%
\xy
(-10,6)*+{\wt{\al}\iv(y)}="0";
(10,6)*+{\wt{A}}="2";
(20.7,4.6)*+{=\HG A\uth}="3";
(0,-8)*+{\wt{X}}="4";
{\ar^(0.54){\sig} "0";"2"};
{\ar@{_(->} "0";"4"};
{\ar^(0.4){\wt{\afr}} "2";"4"};
{\ar@{}|\circlearrowright (0,-4);(0,6)};
\endxy} \]
Proposition \ref{PropAst} yields a bipullback in $\Sbb$ as follows.
\begin{equation}\label{PBUse1}
\xy
(-18,8)*+{\xg}="0";
(18,8)*+{\frac{\XY \Pi_{\wt{\al}}(\wt{A})}{G}}="4";
(-18,-8)*+{\yh}="10";
(18,-8)*+{\frac{\Pi_{\wt{\al}}(\wt{A})}{H}}="14";
{\ar_(0.56){\frac{p_X}{G}} "4";"0"};
{\ar_{\althh} "0";"10"};
{\ar^{\althhd} "4";"14"};
{\ar^(0.52){\frac{\pi_Y}{H}} "14";"10"};
{\ar@{}|\circlearrowright "0";"14"};
\endxy
\end{equation}


Explicitly, $\zeta\co \XY\Pi_{\wt{\al}}(\wt{A})\to A$ is given as follows. Let $(x,(y,\sig))\in \XY\Pi_{\wt{\al}}(\wt{A})$ be any element. Since it satisfies
\[ \wt{\al}([e,x])=\al(x)=\pi_Y(y,\sig)=y, \]
we can apply $\sig$ to $[e,x]\in\wt{\al}\iv(y)$, and express its value as
\begin{equation}\label{g1}
\sig([e,x])=[\eta,a]\quad\text{in}\ \HG A\uth
\end{equation}
with some $\eta\in H$ and $a\in A\uth$. Then the equality
\[ [e,x]=\wt{\afr}([\eta,a])=[\eta,\afr(a)]\quad\text{in}\ \wt{X} \]
implies the existence of $g\in G$ satisfying
\begin{equation}\label{g2}
\eta=\thh_{\afr(a)}(g)\quad\text{and}\quad x=g\afr(a).
\end{equation}
By using the definition of $A\uth\se A$, we can confirm that $a_0=ga\in A\uth$ does not depend on choices of $\eta,a,g$ satisfying $(\ref{g1}),(\ref{g2})$. In fact, $a_0$ is uniquely determined by the equalities
\begin{equation}\label{g3}
[e,a_0]=\sig([e,x])\quad\text{and}\quad \afr(a_0)=x.
\end{equation}
Thus the assignment $\zeta\ppr(x,(y,\sig))=a_0$ gives a well-defined map
\[ \zeta\ppr\co\XY\Pi_{\wt{\al}}(\wt{A})\to A\uth, \]
which is shown to be $G$-equivariant, by using the characterizing equality $(\ref{g3})$. We obtain $\zeta$ as the composition of $\XY\Pi_{\wt{\al}}(\wt{A})\ov{\zeta\ppr}{\lra}A\uth\hookrightarrow A$.
Thus there is a commutative diagram
\[
\xy
(-12,8)*+{A\uth}="0";
(12,8)*+{\XY\Pi_{\wt{\al}}(\wt{A})}="2";
(-4,-4)*+{}="3";
(-12,-8)*+{X}="4";
(3,4)*+{}="5";
(12,-8)*+{A}="6";
{\ar_(0.6){\zeta\ppr} "2";"0"};
{\ar_{\afr\uth} "0";"4"};
{\ar@{_(->} "0";"6"};
{\ar^{\zeta} "2";"6"};
{\ar^{\afr} "6";"4"};
{\ar@{}|\circlearrowright "2";"3"};
{\ar@{}|\circlearrowright "4";"5"};
\endxy
\]
in $\Gs$.


From $\wt{\al}\in\Hs(\wt{X},Y)$ and $(\wt{A},\wt{\afr})\in\Ob(\Hs/Y)$, we can draw the associated exponential diagram
\begin{equation}\label{PBUse2}
\xy
(-20,8)*+{\wt{X}}="0";
(-3,8)*+{\wt{A}}="2";
(1,17)*+{}="3";
(20,8)*+{\wt{X}\un{Y}{\ti}\Pi_{\wt{\al}}(\wt{A})}="4";
(-20,-8)*+{Y}="10";
(20,-8)*+{\Pi_{\wt{\al}}(\wt{A})}="14";
{\ar_{\wt{\afr}} "2";"0"};
{\ar_(0.62){\rho} "4";"2"};
{\ar_{\wt{\al}} "0";"10"};
{\ar^{\wt{\al}\di} "4";"14"};
{\ar^{\pi_Y} "14";"10"};
{\ar@/_1.80pc/_{p_{\wt{X}}} "4";"0"};
{\ar@{}|\circlearrowright "0";"14"};
{\ar@{}|\circlearrowright "2";"3"};
\endxy
\end{equation}
in $\Hs$. Then we have the following commutative diagram in $\Sbb$,
\begin{equation}\label{PBUse3}
\xy
(-14,18)*+{\xg}="0";
(14,18)*+{\frac{\XY\Pi_{\wt{\al}}(\wt{A})}{G}}="2";
(-25,0)*+{}="3";
(-14,0)*+{\frac{\wt{X}}{H}}="4";
(14,0)*+{\frac{\wt{X}\un{Y}{\ti}\Pi_{\wt{\al}}(\wt{A})}{H}}="6";
(31,0)*+{}="7";
(-14,-18)*+{\yh}="8";
(14,-18)*+{\frac{\Pi_{\wt{\al}}(\wt{A})}{H}}="10";
{\ar_(0.56){\frac{p_X}{G}} "2";"0"};
{\ar^{\ulthh} "0";"4"};
{\ar_{\frac{\ups_{\al\di}}{\thh\di}} "2";"6"};
{\ar_(0.56){p_{\wt{X}}} "6";"4"};
{\ar^{\frac{\wt{\al}}{H}} "4";"8"};
{\ar_{\frac{\wt{\al}\di}{H}} "6";"10"};
{\ar^{\pi_Y} "10";"8"};
{\ar@/_2.40pc/_{\althh} "0";"8"};
{\ar@/^2.80pc/^{\althhd} "2";"10"};
{\ar@{}|\circlearrowright "0";"6"};
{\ar@{}|\circlearrowright "4";"10"};
{\ar@{}|\circlearrowright "3";"4"};
{\ar@{}|\circlearrowright "6";"7"};
\endxy
\end{equation}
where $\ups_{\al\di}$ is defined by $\ups_{\al\di}(x,(y,\sig))=([e,x],(y,\sig))$.

In $(\ref{PBUse3})$, the outer square is equal to $(\ref{PBUse1})$, which is a bipullback. The lower square comes from the outer square of $(\ref{PBUse2})$, which is also a bipullback. Thus the upper square of $(\ref{PBUse3})$ becomes a bipullback. In particular the stab-surjectivity of $\ulthh$ implies that $\frac{\ups_{\al\di}}{\thh\di}$ is stab-surjective. Thus $\althhd=\frac{\wt{\al}\di}{H}\ci\frac{\ups_{\al\di}}{\thh\di}$ gives a factorization of
$\althhd$ into equivariant $\frac{\wt{\al}\di}{H}$ and stab-surjective $\frac{\ups_{\al\di}}{\thh\di}$, which implies the following $H$-isomorphism.
\begin{eqnarray*}
\wt{X}\un{Y}{\ti}\Pi_{\wt{\al}}(\wt{A})&\cong&\SIm(\althhd)\\
&=&\SIm(\pi_Y\ci\althhd)\ =\ \SIm(\althh\ci\frac{p_X}{G}).
\end{eqnarray*}

If we apply Lemma \ref{LemRightAdj2} {\rm (2)} to the stab-surjective 1-cell $\ulthh$, then the unit of $(\ulthh)_+\lt(\ulthh)\uas$ gives an isomorphism
\[ \om\co(A\uth,\afr\uth)\to (\XY\wt{A},p_X)\ ;\ a\mapsto (\afr(a),[e,a]). \]
Composing this with the diagram obtained in Proposition \ref{PropAst},
we have a bipullback in $\Sbb$
\[
\xy
(-7,6)*+{\xg}="0";
(7,6)*+{\frac{A\uth}{G}}="2";
(-7,-6)*+{\frac{\wt{X}}{H}}="4";
(7,-6)*+{\frac{\wt{A}}{H}}="6";
{\ar_{\frac{\afr\uth}{G}} "2";"0"};
{\ar_{\ulthh} "0";"4"};
{\ar^{\frac{\ups}{\thh\ppr}} "2";"6"};
{\ar^{\frac{\wt{\afr}}{H}} "6";"4"};
{\ar@{}|\circlearrowright "0";"6"};
\endxy
\]
where $\frac{\ups}{\thh\ppr}$ is defined by
\[ \ups(a)=[e,a],\quad\thh\ppr_a=\thh_{\afr(a)} \]
for any $a\in A\uth$. This $\frac{\ups}{\thh\ppr}$ makes the following diagram commutative.
\begin{equation}\label{Concat}
\xy
(-18,8)*+{\xg}="0";
(0,8)*+{\frac{A\uth}{G}}="2";
(22,8)*+{\frac{\XY\Pi_{\wt{\al}}(\wt{A})}{G}}="4";
(-18,-8)*+{\frac{\wt{X}}{H}}="10";
(0,-8)*+{\frac{\wt{A}}{H}}="12";
(22,-8)*+{\frac{\wt{X}\un{Y}{\ti}\Pi_{\wt{\al}}(\wt{A})}{H}}="14";
{\ar_{\frac{\afr\uth}{G}} "2";"0"};
{\ar_(0.56){\frac{\zeta\ppr}{G}} "4";"2"};
{\ar_{\ulthh} "0";"10"};
{\ar^{\frac{\ups}{\thh\ppr}} "2";"12"};
{\ar^{\frac{\ups_{\al\di}}{\thh\di}} "4";"14"};
{\ar^{\frac{\wt{\afr}}{H}} "12";"10"};
{\ar^(0.56){\frac{\rho}{H}} "14";"12"};
{\ar@{}|\circlearrowright "0";"12"};
{\ar@{}|\circlearrowright "2";"14"};
\endxy
\end{equation}
Since the left and outer squares are bipullbacks, the right square also becomes a bipullback.
\end{rem}


\begin{prop}\label{PropSemiTam}
Let $\Afr=(\Afr\uas,\Afr_+,\Afr\bu)$ be the triplet as in Definition \ref{DefOmega}. For any partial exponential diagram $(\ref{DiagPartialExp})$, the following diagram becomes commutative.
\[
\xy
(-20,8)*+{\Afr(\xg)}="0";
(-1,8)*+{\Afr(\frac{A}{G})}="2";
(22,8)*+{\Afr(\frac{\XY P}{G})}="4";
(-20,-8)*+{\Afr(\yh)}="10";
(22,-8)*+{\Afr(\frac{P}{H})}="14";
{\ar_{\Afr_+(\frac{\afr}{G})} "2";"0"};
{\ar^(0.46){\Afr\uas(\frac{\zeta}{G})} "2";"4"};
{\ar_{\Afr\bu(\althh)} "0";"10"};
{\ar^{\Afr\bu(\althhd)} "4";"14"};
{\ar^{\Afr_+(\frac{\pi_Y}{H})} "14";"10"};
{\ar@{}|\circlearrowright "0";"14"};
\endxy
\]
\end{prop}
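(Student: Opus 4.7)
My plan is to decompose the partial exponential diagram $(\ref{DiagPartialExp})$ using the $\SIm$-factorization $\althh=\frac{\wt{\al}}{H}\ci\ulthh$, and to reduce the required commutativity on Burnside rings to three atomic commutativities, each of which is already known. The decomposition is carried out in Remark \ref{RemQuasiExp}: diagrams $(\ref{Concat})$ and $(\ref{PBUse3})$ exhibit the partial exponential diagram as the outer boundary of a grid built from three pieces---the left bipullback of $(\ref{Concat})$ involving the stab-surjective 1-cell $\ulthh$; the right bipullback of $(\ref{Concat})$; and the lower square of $(\ref{PBUse3})$, which (read inside $\Hs$) is the classical exponential diagram of Definition \ref{DefExp} associated with the $H$-map $\wt{\al}\co\wt{X}\to Y$ and the $H$-set $(\wt{A},\wt{\afr})\in\Hs/\wt{X}$.

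Applying the triplet $(\Afr\uas,\Afr_+,\Afr\bu)$, the two bipullbacks induce commutative squares by Corollary \ref{CorOmega}(3), and the classical exponential diagram induces a commutative square by Example \ref{SemiBurnTam} --- the statement that the semi-Burnside functor on $\Hs$ is a semi-Tambara functor in Tambara's classical sense. Here I use the fact, recorded in Remarks \ref{RemAst}, \ref{RemPlus} and \ref{RemBullet}, that on equivariant 1-cells the triplet $(\Afr\uas,\Afr_+,\Afr\bu)$ restricts to the classical triplet on $\Hs$. Pasting these three squares, together with the compositionality identities $\Afr\bu(\althh)=\Afr\bu(\frac{\wt{\al}}{H})\ci\Afr\bu(\ulthh)$ and $\Afr\bu(\althhd)=\Afr\bu(\frac{\wt{\al}\di}{H})\ci\Afr\bu(\frac{\ups_{\al\di}}{\thh\di})$, and the factorization $\zeta=\iota\ci\zeta\ppr$ of the counit through the inclusion $\iota\co A\uth\hookrightarrow A$ that appears implicitly in $(\ref{Concat})$, should yield the identity asserted in Proposition \ref{PropSemiTam}.

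The main obstacle is in the pasting step. The subtlety is that $\Afr_+(\frac{\afr}{G})$ does not factor cleanly through $\Afr$ of the subobject $\frac{A\uth}{G}$, so one cannot simply replace $\Aa$ by $\Aa\uth$ on the source side. The resolution is to re-run the construction of Remark \ref{RemQuasiExp} not on the fixed base $\Aa=(A,\afr)$, but on each object $(B,\afr\ci\bfr)\in\Gs/X$ arising from a class $[(B,\bfr)]\in\Afr(A/G)$ via $\Afr_+(\frac{\afr}{G})$. Verifying that Remark \ref{RemQuasiExp} is sufficiently natural in its base object, so that the three atomic commutativities piece together coherently as $(B,\bfr)$ ranges over $\Gs/A$, is the technical heart of the argument. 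Once this naturality is in place, pasting of the three squares produces the commutativity asserted in Proposition \ref{PropSemiTam}.
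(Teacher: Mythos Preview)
Your strategy is essentially the paper's own: work object-by-object with $(B,\bfr)\in\Gs/A$, push $(B,\afr\ci\bfr)$ through the $\SIm$-factorization via $(-)\uth$ and $S_\thh$, use the two bipullbacks of $(\ref{Concat})$ together with Corollary \ref{CorOmega}, and invoke Example \ref{SemiBurnTam} on the classical exponential diagram $(\ref{PBUse2})$ in $\Hs$ to handle the $\Pi_{\wt{\al}}$ step. The chain of isomorphisms you sketch is exactly the one the paper assembles.

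Where your outline is too soft is at the point you label ``naturality''. The pasting does not close automatically: on the right-hand side of the target identity one must compute $(\althhd)\bu(\frac{\zeta}{G})\uas\Bb$, and $(\althhd)\bu$ begins with the idempotent $(-)^{\thh\di}$ applied to $\zeta\uas\Bb$, whereas on the left-hand side one has applied $(-)\uth$ to $(B,\afr\ci\bfr)$ \emph{before} any pullback. These two fixed-point operations live over different bases and are mediated by the factorization $\zeta=\iota\ci\zeta\ppr$; the precise compatibility one must check is
\[
\zeta^{\prime\ast}(B\uth,\bfr\uth)\ \cong\ (-)^{\thh\di}\ci\zeta\uas\Bb,
\]
which is equation $(\ref{ToShow})$ in the paper. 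Concretely, with $C=B\times_A(\XY\Pi_{\wt{\al}}(\wt{A}))$ and $p\co C\to B$ the projection, this amounts to the set-theoretic equality $p^{-1}(B\uth)=C^{\thh\di}$, and it is proved by observing that for $c=(b,x,(y,\sig))\in C$ the equivalence relations $\un{(\thh,\afr\ci\bfr)}{\eq}$ at $b$ and $\un{(\thh\di,q)}{\eq}$ at $c$ coincide (since $\thh_{\afr(\bfr(b))}=\thh_x=\thh\di_{q(c)}$ and the $(y,\sig)$-coordinate is determined by $x$ under the $G$-action). This is a genuine computation, not a formal naturality, and it is the step that makes the three atomic squares paste. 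Once you isolate and prove this identity, your argument is complete and coincides with the paper's.
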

\begin{proof}
Let $\Bb\in\Ob(\Gs/A)$ be any object. It suffices to give an isomorphism
\[ \atb(\frac{\afr}{G})_+\Bb\cong(\frac{\pi_Y}{H})_+(\althhd)\bu(\frac{\zeta}{G})\uas\Bb \]
in $\Hs/Y$. 

We use the detailed description given in Remark \ref{RemQuasiExp}.
Applying $(-)\uth$ to the morphism
\[ \bfr\co\afr_+\Bb=(B,\afr\ci\bfr)\to\Aa \]
in $\Gs/X$, we obtain
\[ \bfr\uth\co(B\uth,(\afr\ci\bfr)\uth)\to(A\uth,\afr\uth). \]
Furthermore, applying $S_{\thh}\co\Gs/X\to\Hs/\wt{X}$, we obtain a morphism
\[ \wt{\bfr}\co(\wt{B},\wt{(\afr\ci\bfr)})\to(\wt{A},\wt{\afr}), \]
where we put $S_{\thh}(B\uth,(\afr\ci\bfr)\uth)=(\wt{B},\wt{\bfr})$ and $S_{\thh}(\bfr\uth)=\wt{\bfr}$.

If we regard $(B\uth,\bfr\uth)$ as an object in $\Gs/A\uth$, then we have
\begin{eqnarray*}
&(-)\uth\ci(\frac{\afr}{G})_+\Bb=(B\uth,(\afr\ci\bfr)\uth)=(\frac{\afr\uth}{G})_+(B\uth,\bfr\uth),&\\
&(\frac{\ups}{\thh\ppr})_+(B\uth,\bfr\uth)\cong(\wt{B},\wt{\bfr}).&
\end{eqnarray*}
Moreover, the bipullback in $(\ref{Concat})$ gives isomorphisms
\begin{eqnarray*}
S_{\thh}\ci(\frac{\afr\uth}{G})_+(B\uth,\bfr\uth)&\cong&(\ulthh)_+(\frac{\afr\uth}{G})_+(B\uth,\bfr\uth)\\
&\cong&(\frac{\wt{\afr}}{H})_+(\frac{\ups}{\thh\ppr})_+(B\uth,\bfr\uth)\ \cong\ \wt{\afr}_+(\wt{B},\wt{\bfr}),
\end{eqnarray*}
\begin{eqnarray*}
S_{\althhd}\ci(\frac{\zeta\ppr}{G})\uas(B\uth,\bfr\uth)&\cong&(\frac{\ups_{\al\di}}{\thh\di})_+(\frac{\zeta\ppr}{G})\uas(B\uth,\bfr\uth)\\
&\cong&(\frac{\rho}{H})\uas(\frac{\ups}{\thh\ppr})_+(B\uth,\bfr\uth)\ \cong\ \rho\uas(\wt{B},\wt{\bfr})
\end{eqnarray*}
by Corollary \ref{CorOmega}.

Since $(\ref{PBUse2})$ is an exponential diagram in $\Hs$, by Example \ref{SemiBurnTam} we have
\[ \Pi_{\wt{\al}}\wt{\afr}_+(\wt{B},\wt{\bfr})\cong\pi_{Y_+}\Pi_{\wt{\al}\di}\rho\uas(\wt{B},\wt{\bfr})\ \ \text{in}\ \Hs, \]
for $(\wt{B},\wt{\bfr})\in\Ob(\Hs/\wt{A})$.

Composing the isomorphisms so far obtained, we have
\begin{eqnarray*}
\atb(\frac{\afr}{G})_+\Bb&\cong&\Pi_{\wt{\al}}S_{\thh}\ci(-)\uth\ci(\frac{\afr}{G})_+\Bb\\
&\cong&\Pi_{\wt{\al}}S_{\thh}(\frac{\afr\uth}{G})_+(B\uth,\bfr\uth)\\
&\cong&\Pi_{\wt{\al}}\wt{\afr}_+(\wt{B},\wt{\bfr})\\
&\cong&\pi_{Y+}\Pi_{\wt{\al}\di}\rho\uas(\wt{B},\wt{\bfr})\\
&\cong&(\frac{\pi_Y}{H})_+\Pi_{\wt{\al}\di} S_{\althhd}\ci\zeta^{\prime\ast}(B\uth,\bfr\uth).
\end{eqnarray*}
Thus it remains to show
\begin{equation}\label{ToShow}
\zeta^{\prime\ast}(B\uth,\bfr\uth)\cong(-)^{\thh\di}\ci\zeta\uas\Bb.
\end{equation}
Take a fibered product
\[
\xy
(-12,8)*+{B}="0";
(12,8)*+{B\un{A}{\ti}(X\un{Y}{\ti}\Pi_{\wt{\al}}(\wt{A}))}="2";
(29.8,8.9)*+{=C}="3";
(-12,-7)*+{A}="4";
(12,-7)*+{X\un{Y}{\ti}\Pi_{\wt{\al}}(\wt{A})}="6";
{\ar_(0.73){p} "2";"0"};
{\ar_{\bfr} "0";"4"};
{\ar^{q} "2";"6"};
{\ar^(0.6){\zeta} "6";"4"};
{\ar@{}|\circlearrowright "0";"6"};
\endxy
\]
in $\Gs$. Then we have
\begin{eqnarray*}
\zeta\uas\Bb&=&(C,q),\\
\zeta^{\prime\ast}(B\uth,\bfr\uth)&=&\zeta\uas(B\uth\ov{\bfr|_{B\uth}}{\lra}A)\ \cong\ (p\iv(B\uth),q\ppr)
\end{eqnarray*}
where $q\ppr\co p\iv(B\uth)\to\XY\Pi_{\wt{\al}}(\wt{A})$ is the restriction of $q$ onto $p\iv(B\uth)\se C$. This is the left hand side of $(\ref{ToShow})$. On the other hand, the right hand side of $(\ref{ToShow})$ is, by definition
\[ (-)^{\thh\di}\ci\zeta\uas\Bb=(C^{\thh\di},q\pprr), \]
where $q\pprr$ is similarly the restriction of $q$ onto $C^{\thh\di}$.
Thus it suffices to show $p\iv(B\uth)=C^{\thh\di}$ as subsets of $C$.


Explicitly, by the characterization $(\ref{g3})$, $C$ is
\begin{eqnarray*}
C&=&\{(b,x,(y,\sig))\in B\ti \XY\Pi_{\wt{\al}}(\wt{A}) \mid \bfr(b)=\zeta(x,(y,\sig)) \}\\
&=&\{ (b,x,(y,\sig))\in B\ti \XY\Pi_{\wt{\al}}(\wt{A}) \mid [e,\bfr(b)]=\sig([e,x]), \afr(\bfr(b))=x \},
\end{eqnarray*}
on which $G$ acts by
\begin{eqnarray*}
&g(b,x,(y,\sig))=(gb,gx,\thh_x(g)(y,\sig))&\\
&(\fa g\in G, (b,x,(y,\sig))\in C).&
\end{eqnarray*}
Maps $p$ and $q$ are defined by
\[ p(b,x,(y,\sig))=b   ,\quad q(b,x,(y,\sig))=(x,(y,\sig)) \]
for any $(b,x,(y,\sig))\in C$.

Let $c=(b,x,(y,\sig))\in C$ be any element.
Remark that we have
\[ \thh_{\afr(\bfr(b))}=\thh_x=\thh\di_{(x,(y,\sig))}=\thh\di_{q(c)}. \]
Thus for any $g,g\ppr\in G$, we have
\begin{eqnarray*}
g\un{(\thh,\afr\ci\bfr)}{\eq}g\ppr\ \text{at}\ b&\LR& \thh_x(g)=\thh_x(g\ppr)\quad\text{and}\quad gx=g\ppr x\\
&\LR& \thh_x(g)=\thh_x(g\ppr)\quad\text{and}\quad g(x,(y,\sig))=g\ppr (x,(y,\sig))\\
&\LR& g\un{(\thh\di,q)}{\eq}g\ppr\ \text{at}\ c.
\end{eqnarray*}
This implies
\begin{eqnarray*}
c\in p\iv(B\uth)&\LR&b\in B\uth\\
&\LR&gb=g\ppr b\ \ \text{whenever}\ g\un{(\thh,\afr\ci\bfr)}{\eq}g\ppr\ \text{at}\ b\\
&\LR&g(b,x,(y,\sig))=g\ppr (b,x,(y,\sig))\ \ \text{whenever}\ g\un{(\thh\di,q)}{\eq}g\ppr\ \text{at}\ c\\
&\LR&c\in C^{\thh\di},
\end{eqnarray*}
which means $p\iv(B\uth)=C^{\thh\di}$.
\end{proof}

From those obtained so far, we can show the triplet $\Om=(\Om\uas,\Om_+,\Om\bu)$ satisfies analogous properties to Tambara functors.
We use the definition and properties of {\it polynomial maps} (which are called {\it algebraic maps} in \cite{D-S}).

\begin{dfn}\label{DefPoly}
(\cite[Section 5.6]{D-S})
Let $A$ be an additive monoid, $M$ be an abelian group, and $\vp\co A \to M$ be a map. For any $n$ elements $a_1,a_2,\ldots ,a_n\in A$, a map $D_{(a_1,a_2,\ldots , a_n)}\vp\co A\to M$ is defined by
\[
D_{(a_1,\ldots , a_n)}\vp(x)=\sum _{k=0}^n\big(\sum _{1\le i_1<\cdots <i_k\le n}(-1)^{n-k}\vp (x+a_{i_1}+\ldots +a_{i_k})\big).
\]
The map $\vp$ is said to be {\it polynomial} if either $\vp=0$ or there exists a positive integer $n$ such that
\[
D_{(a_1,\ldots , a_n)}\vp=0\quad (\forall a_1,\ldots ,a_n\in A).
\] 

For any polynomial map $\vp\co A\to M$, its degree is defined by
\[
\deg \vp=\max \{n\in \mathbb{N}_{\ge 0}\mid \exists a_1,\ldots ,a_n \in A\ \text{such that}\ D_{(a_1,\ldots , a_n)}\vp\ne0 \}
\]
if $\vp\ne0$, and $\deg \vp=-1$ if $\vp=0$.
\end{dfn}

Also, for a map between additive monoids, we define as follows.
\begin{dfn}
A map $\vp\co A\to B$ between additive monoids $A$ and $B$ is said to be polynomial of degree $n$ if $\kappa_B\circ\vp\co A\to K_0B$ is polynomial of degree $n$ in the sense of Definition \ref{DefPoly}, where $\kappa_B\co B\to K_0B$ is the group completion map.
\end{dfn}

\begin{rem}\label{p2-B'}
Let $A,B,C$ be additive monoids. If $\vp\co A\to B$ and $\psi\co B\to C$ are polynomial maps of degree $m$ and $n$ respectively, then $\psi\circ\vp\co A\to C$ becomes polynomial of degree $\le mn$.
\end{rem}

\begin{prop}\label{p2-C'}
Let $\vp\co A\to B$ be a polynomial map between additive monoids $A$ and $B$. Then there exists a unique extension $\wt{\vp}\co K_0A\to K_0B$ of $\vp$ as a polynomial map, i.e. unique polynomial map $\wt{\vp}$ satisfying $\wt{\vp}\circ\kappa_A=\kappa_B\circ\vp$.
Moreover, the following holds.
\begin{enumerate}
\item If $\vp$ is additive, then $\wt{\vp}$ is also additive.
\item If $A,B$ are commutative semi-rings and $\vp$ is multiplicative, then $\wt{\vp}$ is also multiplicative.
\end{enumerate}
\end{prop}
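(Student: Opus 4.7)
My plan is to reduce immediately to extending a polynomial map $\vp\co A\to M$ into the abelian group $M=K_0B$, then to establish uniqueness by induction on degree, construct the extension either by a closed Newton-type formula or by a parallel induction, and finally deduce (1) and (2) by applying uniqueness to well-chosen auxiliary maps.

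For uniqueness, I would induct on $n$ where, assuming two extensions $\wt\vp_1,\wt\vp_2$, the difference $\delta=\wt\vp_1-\wt\vp_2\co K_0A\to M$ is polynomial of degree $\le n$ and vanishes on $\kappa_A(A)$. For any $a\in A$, the first difference $x\mapsto\delta(x+\kappa_A(a))-\delta(x)$ is polynomial of degree $\le n-1$ (from the iterated definition of $D_{(\ldots)}\delta$) and still vanishes on $\kappa_A(A)$, since $\delta(\kappa_A(a+a'))-\delta(\kappa_A(a'))=0$. The inductive hypothesis then forces $D_{\kappa_A(a)}\delta\equiv 0$, i.e.\ $\delta$ is invariant under translation by every element of $\kappa_A(A)$. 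Writing an arbitrary $x\in K_0A$ as $\kappa_A(a)-\kappa_A(b)$, we conclude $\delta(x)=\delta(x+\kappa_A(b))=\delta(\kappa_A(a))=0$. The base $n=-1$ is trivial.

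For existence I would again induct on $n=\deg\vp$. The cases $n\le 0$ are immediate: for $n=-1$ take $\wt\vp=0$, and for $n=0$ take $\wt\vp$ to be the same constant value as $\vp$. For the inductive step, the cleanest route is to define $\wt\vp$ by the Newton-type formula
\[
\wt\vp(\kappa_A(a)-\kappa_A(b))\;=\;\sum_{k=0}^{n}(-1)^k\binom{n+1}{k+1}\vp(a+kb),
\]
which reduces to $\vp(a)$ when $b=0$ (the binomial sum telescoping to $1$). The two remaining tasks are (i) to verify that this sum depends only on the class of $(a,b)$ in $K_0A$, i.e.\ is invariant under the Grothendieck relation defining $K_0A$, and (ii) to verify that the resulting map is polynomial of degree $\le n$. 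Both tasks reduce, via the vanishing identity $D_{(b,\ldots,b)}\vp=0$ with $n+1$ copies of $b$, to binomial-coefficient manipulations; this well-definedness of the Newton sum under the Grothendieck relation is the main technical obstacle. An alternative route avoiding a closed formula is: for each fixed $a\in A$ the shift $x\mapsto\vp(x+a)-\vp(x)$ is polynomial of degree $\le n-1$, so extends to $K_0A$ by inductive hypothesis, and these shifts can be assembled into $\wt\vp$ using the uniqueness just proved.

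Both (1) and (2) then follow by a single application of uniqueness. For (1), the map $\psi(x,y)=\wt\vp(x+y)-\wt\vp(x)-\wt\vp(y)\co K_0A\times K_0A\to K_0B$ is polynomial in each variable separately and vanishes on $\kappa_A(A)\times\kappa_A(A)$ by additivity of $\vp$; fixing $y=\kappa_A(b)$ and applying uniqueness in $x$ yields $\psi(\,\cdot\,,\kappa_A(b))\equiv 0$, and then fixing $x\in K_0A$ and applying uniqueness in $y$ yields $\psi\equiv 0$. The identical argument applies to $\psi(x,y)=\wt\vp(xy)-\wt\vp(x)\wt\vp(y)$ for (2): multiplication by a fixed element in the commutative semi-ring $K_0A$ (resp.\ in $K_0B$) is additive, so $x\mapsto\wt\vp(xy)$ and $x\mapsto\wt\vp(x)\wt\vp(y)$ are each polynomial by Remark \ref{p2-B'}, and $\psi$ vanishes on $\kappa_A(A)\times\kappa_A(A)$ by multiplicativity of $\vp$.
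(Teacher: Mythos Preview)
The paper itself gives no proof: it simply cites Dress--Siebeneicher \cite{D-S} for the construction. So there is no argument in the paper to compare against, and your sketch is in fact more detailed than what the paper provides. The route you outline---uniqueness by induction on degree, existence via a Newton-type extension formula, and deducing (1) and (2) by applying uniqueness to the auxiliary defect maps $\psi$---is the standard one, and is essentially what one finds in \cite{D-S}.

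Your uniqueness argument is correct as written, and the derivations of (1) and (2) from uniqueness are clean and valid. The one place where your proposal remains a plan rather than a proof is existence. You correctly identify the main technical obstacle---showing that $\sum_{k=0}^{n}(-1)^k\binom{n+1}{k+1}\,\kappa_B\vp(a+kb)$ depends only on the class of $(a,b)$ in $K_0A$, and that the resulting map is polynomial of degree $\le n$---but you do not carry it out. This step genuinely requires work: one must check invariance not just under simultaneous shifts $(a,b)\mapsto(a+c,b+c)$ but under the full Grothendieck relation, and the verification that $D_{(\xi_1,\ldots,\xi_{n+1})}\wt\vp=0$ for arbitrary $\xi_i\in K_0A$ is a separate (if similar) computation. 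Your alternative inductive route, in which the first differences $\vp_a=D_a\vp$ are extended by induction and then ``assembled'' into $\wt\vp$, is vaguer still: the assembly step needs an explicit argument (roughly, one must show the extended differences satisfy the correct compatibility to integrate to a single $\wt\vp$). Since the paper defers entirely to \cite{D-S} for this, your proposal is already at the paper's level of rigor; but if you want a self-contained proof, the existence step is where it must be written out.
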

\begin{proof}
The construction can be found in \cite{D-S}.
\end{proof}

\bigskip

\begin{prop}\label{OmegaAlg}
For any 1-cell $\oc$, those maps
\[ \Afr\uas(\althh),\Afr_+(\althh),\Afr\bu(\althh) \]
are polynomial. Thus they extend to yield the maps between $\Om(\xg)$ and $\Om(\yh)$
\[ \Om\uas(\althh),\Om_+(\althh),\Om\bu(\althh), \]
uniquely as polynomial maps.
\end{prop}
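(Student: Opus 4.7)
The plan is to handle the three maps separately, reducing the only non-trivial polynomiality to Tambara's classical result for $\Pi_\al$ on $\Gs$.

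For $\Afr\uas(\althh)$, the functor $\ata$ sits between two adjoints $\atp\lt\ata\lt\atb$ by Propositions \ref{PropLeftAdj} and \ref{PropRightAdj}, so it preserves both finite coproducts (initial object included) and fibered products. The induced map $\Afr\uas(\althh)$ is therefore a semi-ring homomorphism, in particular additive, hence polynomial of degree $\le 1$. For $\Afr_+(\althh)$, the functor $\atp$ is a left adjoint, so it preserves coproducts and the initial object; consequently $\Afr_+(\althh)$ is a homomorphism of additive monoids, again polynomial of degree $\le 1$.

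The polynomiality of $\Afr\bu(\althh)$ is the substantive part. By Definition \ref{DefBullet},
\[ \atb \;\cong\; \Pi_{\wt{\al}}\circ S_{\thh}\circ (-)^{\thh}, \]
so I would verify three claims in turn. First, $(-)^{\thh}$ preserves coproducts and the empty object in $\Gs/X$: by Definition \ref{DefFix}, a point $a\in A\am A'$ lies in $(A\am A')^{\thh}$ if and only if it lies in the fixed locus of the summand containing it, since both the relation $g\un{(\thh,[\afr,\afr'])}{\eq}g'$ at $a$ and the implication $ga=g'a$ involve only the $G$-orbit of $a$, which is confined to a single summand. Second, $S_{\thh}\cong(\ulthh)_{+}$ is a left adjoint by Remark \ref{RemPlus} and Proposition \ref{PropLeftAdj}, hence additive. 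Third, $\Pi_{\wt{\al}}$ is the classical right adjoint to the $H$-equivariant pullback $\wt{\al}\uas$, and by Example \ref{SemiBurnTam} together with \cite[Sections 1--2]{Tambara} (see also \cite{D-S}), the induced map of semi-Burnside rings is polynomial of degree bounded by the maximum cardinality of a fiber of $\wt{\al}$. Since the composition of polynomial maps is polynomial (Remark \ref{p2-B'}), $\Afr\bu(\althh)$ is polynomial.

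Finally, applying Proposition \ref{p2-C'} to each of the three polynomial maps yields unique polynomial extensions $\Om\uas(\althh),\ \Om_+(\althh),\ \Om\bu(\althh)\co\Om(\xg)\to\Om(\yh)$; the same proposition ensures that additivity and multiplicativity pass from the semi-Burnside level to the Burnside level. The main obstacle is the polynomiality of $\Afr\bu(\althh)$; once the decomposition $\atb\cong\Pi_{\wt{\al}}\circ S_{\thh}\circ(-)^{\thh}$ is in hand, the argument reduces to the two easy additivity checks together with an appeal to Tambara's classical polynomiality of $\Pi_\al$, while the other two maps follow from formal adjoint-functor reasoning.
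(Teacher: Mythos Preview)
Your proof is correct and follows essentially the same approach as the paper: additivity of $\Afr\uas(\althh)$ and $\Afr_+(\althh)$ is immediate, while for $\Afr\bu(\althh)$ you use the decomposition $\atb=\Pi_{\wt{\al}}\circ S_{\thh}\circ(-)^{\thh}$ from Definition~\ref{DefBullet}, observe that $(-)^{\thh}$ and $S_{\thh}$ preserve coproducts (hence induce additive maps), and then appeal to Tambara for the polynomiality of $\Pi_{\wt{\al}}$. Your justification that $(-)^{\thh}$ preserves coproducts is a welcome detail the paper omits.
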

\begin{proof}
$\Afr\uas(\althh)$ and $\Afr_+(\althh)$ are additive, and hence polynomial. As for $\Afr\bu(\althh)$, by definition, it is a composition of maps induced from the functors
\begin{equation}\label{Last1}
S_{\thh}\ci(-)\uth\co\Gs/X\to\Gs/\SIm(\althh)
\end{equation}
and 
\begin{equation}\label{Last2}
\Pi_{\wt{\al}}\co\Gs/\SIm(\althh)\to\Hs/Y.
\end{equation}
As in \cite{Tambara}, the functor $(\ref{Last2})$ induces a polynomial map $\Afr\bu(\frac{\wt{\al}}{H})=A(\wt{\al})\co A(\SIm(\althh))\to A(Y)$. Since $S_{\thh}$ and $(-)\uth$ preserve coproducts, $(\ref{Last1})$ induces an additive map $\Afr(\xg)\to\Afr(\frac{\SIm(\althh)}{H})$.
\end{proof}

\begin{cor}
The triplet $\Om=(\Om\uas,\Om_+,\Om\bu)$ in Definition \ref{DefOmega} satisfies the following.
\begin{itemize}
\item[{\rm (i)}] $(\Om\uas,\Om_+)$ is a Mackey functor, and $(\Om\uas,\Om\bu)$ is a semi-Mackey functor on $\Sbb$.
\item[{\rm (ii)}] For any partial exponential diagram $(\ref{DiagPartialExp})$, the following diagram becomes commutative.
\[
\xy
(-20,8)*+{\Om(\xg)}="0";
(-1,8)*+{\Om(\frac{A}{G})}="2";
(22,8)*+{\Om(\frac{\XY P}{G})}="4";
(-20,-8)*+{\Om(\yh)}="10";
(22,-8)*+{\Om(\frac{P}{H})}="14";
{\ar_{\Om_+(\frac{\afr}{G})} "2";"0"};
{\ar^(0.46){\Om\uas(\frac{\zeta}{G})} "2";"4"};
{\ar_{\Om\bu(\althh)} "0";"10"};
{\ar^{\Om\bu(\althhd)} "4";"14"};
{\ar^{\Om_+(\frac{\pi_Y}{H})} "14";"10"};
{\ar@{}|\circlearrowright "0";"14"};
\endxy
\]
\end{itemize}
\end{cor}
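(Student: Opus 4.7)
The plan is to lift everything already established for the semi-version $\Afr=(\Afr\uas,\Afr_+,\Afr\bu)$ up to the group completion $\Om(\xg)=K_0\Afr(\xg)$, using the universal property of $K_0$ together with the uniqueness of polynomial extensions furnished by Proposition \ref{p2-C'}. Proposition \ref{OmegaAlg} already tells us that each of $\Afr\uas(\althh)$, $\Afr_+(\althh)$, $\Afr\bu(\althh)$ is polynomial, so each admits a unique polynomial extension to $\Om$; the three extensions are precisely $\Om\uas(\althh), \Om_+(\althh), \Om\bu(\althh)$.

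For part (i), the abelian group structure on $\Om(\xg)$ is built in. Functoriality and the bicoproduct axiom for $(\Om\uas,\Om_+)$ follow by applying $K_0$ termwise to Corollary \ref{CorOmega}: since $\Afr\uas(\althh)$ is a semi-ring homomorphism and $\Afr_+(\althh)$ is an additive monoid homomorphism, they extend canonically to a ring homomorphism and a group homomorphism on $\Om$; the bijection in Corollary \ref{CorOmega} (2) then passes to $K_0$ as an isomorphism of groups. The first bipullback square in Corollary \ref{CorOmega} (3) lifts to $\Om$ because both composites are additive and already agree on $\Afr$.

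For the pair $(\Om\uas,\Om\bu)$, multiplicativity of $\Om\bu(\althh)$ is inherited from $\Afr\bu(\althh)$ by Proposition \ref{p2-C'} (2). To check the second bipullback square in Corollary \ref{CorOmega} (3) on $\Om$, observe that both composites $\Om\uas(\althh)\ci\Om\bu(\bet)$ and $\Om\bu(\frac{\gamma}{\mu})\ci\Om\uas(\frac{\delta}{\nu})$ are polynomial, as compositions of polynomial maps (Remark \ref{p2-B'}), and their restrictions to $\Afr$ agree by Corollary \ref{CorOmega} (3); the uniqueness clause of Proposition \ref{p2-C'} then forces equality on $\Om$. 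Part (ii) follows by the same template, with Proposition \ref{PropSemiTam} playing the role of Corollary \ref{CorOmega} (3): both composites $\Om_+(\frac{\pi_Y}{H})\ci\Om\bu(\althhd)\ci\Om\uas(\frac{\zeta}{G})$ and $\Om\bu(\althh)\ci\Om_+(\frac{\afr}{G})$ are polynomial, coincide on $\Afr$ by Proposition \ref{PropSemiTam}, and are therefore equal on $\Om$ by the uniqueness of polynomial extensions.

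I anticipate the only real bookkeeping concern is verifying that every map appearing in the two diagrams is genuinely polynomial, so that the uniqueness clause of Proposition \ref{p2-C'} applies; once one notes that $\Om\uas$- and $\Om_+$-factors are additive (hence polynomial of degree $\le 1$) while $\Om\bu$-factors are polynomial by Proposition \ref{OmegaAlg}, closure under composition (Remark \ref{p2-B'}) handles the rest, and the proof is essentially formal.
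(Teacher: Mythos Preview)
Your proposal is correct and follows essentially the same approach as the paper: the paper's proof simply cites Remark~\ref{Rem22A}, Propositions~\ref{PropSemiTam}, \ref{p2-C'} and~\ref{OmegaAlg}, and your argument is a faithful unpacking of exactly how these ingredients combine via the uniqueness of polynomial extensions to transfer the identities from $\Afr$ to $\Om$. The only thing you spell out beyond the paper is the use of Remark~\ref{p2-B'} to ensure the composite maps are polynomial, which is implicit in the paper's one-line proof.
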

\begin{proof}
This follows immediately from Remark \ref{Rem22A}, Propositions \ref{PropSemiTam}, \ref{p2-C'} and \ref{OmegaAlg}.
\end{proof}

\section*{Acknowledgement}
This article has been written when the author was staying at LAMFA, l'Universit\'{e} de Picardie-Jules Verne, by the support of JSPS Postdoctoral Fellowships for Research Abroad. He wishes to thank the hospitality of Professor Serge Bouc, Professor Radu Stancu and the members of LAMFA.

\end{document}